\newtheorem{thm}{Theorem}[section]
\newtheorem{lem}[thm]{Lemma}
\newtheorem{prop}[thm]{Proposition}
\newtheorem{defn}[thm]{Definition}
\theoremstyle{remark}
\numberwithin{equation}{section}
\def\N{\mathbb{N}}
\def\R{\mathbb{R}}
\def\ra{\rightarrow}
\def\al{\alpha}
\def\be{\beta}
\def\ep{\epsilon}
\def\ka{\kappa}
\def\la{\lambda}
\def\si{\sigma}
\def\om{\omega}
\def\Om{\Omega}
\def\de{\delta}
\def\vp{\varphi}
\begin{document}

\title[Transition fronts]{Transition Fronts in Time Heterogeneous and Random Media of Ignition Type}

\author{Wenxian Shen}
\address{Department of Mathematics and Statistics, Auburn University, Auburn, AL 36849, USA}
\email{wenxish@auburn.edu}

\author{Zhongwei Shen}
\address{Department of Mathematics and Statistics, Auburn University, Auburn, AL 36849, USA}
\curraddr{Department of Mathematical and Statistical Sciences, University of Alberta, Edmonton, AB T6G 2G1, Canada}
\email{zhongwei@ualberta.ca\\zzs0004@auburn.edu}

\subjclass[2010]{35C07, 35K55, 35K57, 92D25}



\keywords{transition front, time heterogeneous media, time random media.}

\begin{abstract}
The current paper is devoted to the investigation of wave propagation phenomenon in reaction-diffusion equations with ignition type nonlinearity in time heterogeneous and random media. It is proven that such equations in time heterogeneous media admit transition fronts  with time dependent profiles and that such equations in time random media admit transition fronts with random profiles. Important properties of transition fronts, including the boundedness of propagation speeds and the uniform decaying estimates of the propagation fronts, are also obtained.
\end{abstract}

\maketitle

\tableofcontents


\section{Introduction}\label{sec-intro}

Consider the one-dimensional reaction-diffusion equation
\begin{equation}\label{general-eqn}
u_{t}=u_{xx}+f(t,x,u), \quad x\in\R,\,\,t\in\R,
\end{equation}
where $f(t,x,u)$ is of ignition type, that is, there exists $\theta\in(0,1)$ such that for all $t\in\R$ and $x\in\R$, $f(t,x,u)=0$ for $u\in[0,\theta]\cup\{1\}$ and $f(t,x,u)>0$ for $u\in(\theta,1)$. Such an equation arises in the combustion theory (see e.g. \cite{BeLaLi90,BeNiSc85}). The number $\theta$ is usually referred to as the ignition temperature. The front propagation concerning this equation was first investigated by Kanel (see \cite{Ka60,Ka61,Ka62,Ka64}) in the space-time homogeneous media, i.e., $f(t,x,u)=f(u)$; he proved that all solutions, with initial data in some subclass of continuous functions with compact support and values in $[0,1]$, propagate at the same speed $c_{*}>0$, which is the speed of the unique traveling wave solution $\psi(x-c_{*}t)$, where $\psi$ satisfies
\begin{equation*}
\begin{split}
&\psi_{xx}+c_{*}\psi_{x}+f(\psi)=0,\quad\lim_{x\ra-\infty}\psi(x)=1\quad\text{and}\quad\lim_{x\ra\infty}\psi(x)=0.
\end{split}
\end{equation*}
Also see \cite{ArWe75,ArWe78,FiMc77,FiMc80} and references therein for the treatment of traveling wave solutions of \eqref{general-eqn} in space-time homogeneous media.

Recently, equation \eqref{general-eqn} in the space heterogeneous media, i.e., $f(t,x,u)=f(x,u)$, has attracted a lot of attention. In terms of space periodic media, that is, $f(x,u)$ is periodic in $x$, Berestycki and Hamel proved in \cite{BeHa02} the existence of pulsating fronts or periodic traveling waves of the form $\psi(x-c_{*}t,x)$, where $\psi(s,x)$ is periodic in $x$ and satisfies a degenerate elliptic equation with boundary conditions $\lim_{s\ra-\infty}\psi(s,x)=1$ and $\lim_{s\ra\infty}\psi(s,x)=0$ uniformly in $x$. In the work of Weinberger (see \cite{We02}), he proved from the dynamical system viewpoint that solutions with general non-negative compactly supported initial data spread with the speed $c_{*}$. We also refer to \cite{Xin91,Xin92,Xin93} for related works.

In the general space heterogeneous media, wavefront with a profile is no longer appropriate, and we are looking for more general wavefronts such as transition fronts in the sense of Berestycki and Hamel (see \cite{BeHa07,BeHa12}), that is,

\begin{defn}\label{def-transition-wave}
A global-in-time solution $u(t,x)$, $x\in\R$, $t\in\R$ of \eqref{general-eqn} is called a {\rm transition front} if there is a function $\xi:\R\ra\R$, called, {\rm interface location function}, such that
\begin{equation*}
\begin{split}
&u(t,x)\ra1\,\,\text{uniformly in}\,\,t\,\,\text{and}\,\,x\leq\xi(t)\,\,\text{as}\,\,x-\xi(t)\ra-\infty,\,\,and\\
&u(t,x)\ra0\,\,\text{uniformly in}\,\,t\,\,\text{and}\,\,x\geq\xi(t)\,\,\text{as}\,\,x-\xi(t)\ra\infty.
\end{split}
\end{equation*}

A transition front $u(t,x)$ is called {\rm critical} if for any transition front $\tilde{u}(t,x)$ there exists a function $\zeta:\R\ra\R$ such that
\begin{equation*}
\begin{split}
u(t,x)&\geq\tilde{u}(t,x),\quad x\leq\zeta(t),\\
u(t,x)&\leq\tilde{u}(t,x),\quad x\geq\zeta(t)
\end{split}
\end{equation*}
for all $t\in\R$.
\end{defn}

Transition fronts are proper generalizations of traveling waves in homogeneous media and periodic traveling waves (or pulsating fronts) in periodic media. It is easily seen that the interface location function $\xi(t)$ in Definition \ref{def-transition-wave} is unique up to addition by bounded functions. Transition fronts in the above sense are also called {\it generalized traveling waves} in some literature, especially in the time heterogeneous media, i.e., $f(t,x,u)=f(t,u)$ (see \cite{Sh11-1}). Roughly speaking, critical transition fronts are the steepest ones among all transition fronts. It is known that the existence of a transition front implies the existence of a critical transition front and critical transition fronts (if exist) are unique up to phase shift (see Lemmas \ref{lm-critical-traveling-wave1} and Lemma \ref{lm-critical-traveling-wave2}).

In the work of Nolen and Ryzhik (see \cite{NoRy09}), and Mellet, Roquejoffre and Sire (see \cite{MeRoSi10}), transition fronts with additional properties, such as, time monotonicity, finite speed, exponential decay ahead of the interface, etc.,  are proven to exist in the general space heterogeneous media
of ignition type (the work \cite{NoRy09} also deals with transition fronts in space random media of ignition type). Later, stability and uniqueness of such transition fronts are also established in \cite{MNRR09}. These results are then generalized by Zlato\v{s} (see \cite{Zl13}) to the equations in space heterogeneous cylindrical domains of ignition type.

However, there is little study of transition fronts in general time heterogeneous and random  media of ignition type.
In the current paper, we first study front propagation phenomenon in the reaction-diffusion equation \eqref{general-eqn} in general time heterogeneous media, that is,
\begin{equation}\label{main-eqn}
u_{t}=u_{xx}+f(t,u), \quad x\in\R,\,\,t\in\R.
\end{equation}
Here are the assumptions on $f(t,u)$:
\begin{itemize}
\item[\rm(H1)] There is a $\theta\in(0,1)$, called the ignition temperature, such that for all $t\in\R$,
\begin{equation*}
\begin{split}
f(t,u)&=0,\quad u\in(-\infty,\theta]\cup\{1\},\\
f(t,u)&>0,\quad u\in(\theta,1),\\
f(t,u)&<0,\quad u>1.
\end{split}
\end{equation*}
The family of functions $\{f(t,u), u\in\R\}$ is locally uniformly H\"{o}lder continuous. The family of functions $\{f(t,u), t\in\R\}$ is locally uniformly Lipschitz continuous. For any $t\in\R$, $f(t,u)$ is continuously differentiable for $u\geq\theta$.
\item[\rm(H2)] There are Lipschitz continuous functions $f_{\inf}$, $f_{\sup}$ satisfying
\begin{equation*}
\begin{split}
&f_{\inf}, f_{\sup}\in C^1([\theta,\infty),\R)\\
&f_{\inf}(u)=0=f_{\sup}(u)\,\,\text{for}\,\,u\in[0,\theta]\cup\{1\},\\
&0>(f_{\inf})_{u}(1)\geq(f_{\sup})_{u}(1),\\
&0<f_{\inf}(u)<f_{\sup}(u)\,\,\text{for}\,\,u\in(\theta,1)
\end{split}
\end{equation*}
such that $f_{\inf}(u)\leq f(t,u)\leq f_{\sup}(u)$ for $u\in[\theta,1]$ and $t\in\R$.
\end{itemize}

We prove

\begin{thm}\label{thm-transition-wave}
Suppose $\rm(H1)$ and $\rm(H2)$.
 \begin{itemize}
 \item[\rm(1)] (Existence of transition front)  Equation \eqref{main-eqn} admits a transition front $u(t,x)$, $x\in\R$, $t\in\R$ in the sense of Definition \ref{def-transition-wave}, where the  function $\xi:\R\ra\R$ is continuously differentiable and satisfies $u(t,\xi(t))=\theta$ for all $t\in\R$. Moreover, the following properties hold:
\begin{itemize}
\item[\rm(i)] (Monotonicity of the transition front) $u_{x}(t,x)<0$ for $x\in\R$ and $t\in\R$;
\item[\rm(ii)] (Uniform steepness) $\sup_{t\in\R}u_{x}(t,\xi(t))<0$;
\item[\rm(iii)] (Finite speed) $\sup_{t\in\R}|\xi'(t)|<\infty$;
\item[\rm(iv)]  (Uniform decaying estimates) there exists a continuous and strictly decreasing function $v:\R\ra(0,1)$ satisfying $v(x)\geq 1-c_{1}e^{c_{2}x}$, $x\leq-c_{3}$ for some $c_{1},c_{2},c_{3}>0$ and $v(x)=\theta e^{-cx}$, $x\geq0$ for some $c>0$ such that
\begin{equation*}
\begin{split}
u(t,x+\xi(t))&\geq v(x),\quad x\leq0;\\
u(t,x+\xi(t))&\leq v(x),\quad x\geq0.
\end{split}
\end{equation*}
\end{itemize}

\item[\rm(2)] (Periodicity) If $f(t+T,u)=f(t,u)$, then \eqref{main-eqn} admits a periodic traveling wave $u(t,x)$, that is, there are a constant $c\in\R$ and a function $\psi:\R\times\R\ra(0,1)$ satisfying
\begin{equation*}\label{eqn-periodic-tw}
\begin{cases}
\psi_{t}=\psi_{xx}+c\psi_{x}+f(t,\psi),\cr
\lim_{x\ra-\infty}\psi(t,x)=1,\,\,\lim_{x\ra\infty}\psi(t,x)=0\,\,\text{uniformly in}\,\,t\in\R,\cr
\psi(t,\cdot)=\psi(t+T,\cdot)\,\,\text{for all}\,\,t\in\R
\end{cases}
\end{equation*}
such that $u(t,x)=\psi(t,x-ct)$ for $x\in\R$ and $t\in\R$.
\end{itemize}
\end{thm}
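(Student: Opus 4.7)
The plan is to construct the transition front as a locally uniform limit of solutions to the Cauchy problem for \eqref{main-eqn} posed at increasingly early times, and then to upgrade qualitative properties by repeatedly comparing against the two homogeneous ignition problems
\begin{equation*}
v_t = v_{xx} + f_{\inf}(v), \qquad w_t = w_{xx} + f_{\sup}(w),
\end{equation*}
for which the classical Kanel theory provides unique (up to translation) travelling waves $\phi_{\inf}(x-c_{\inf}t)$ and $\phi_{\sup}(x-c_{\sup}t)$ with $0<c_{\inf}\le c_{\sup}$ and explicit exponential decay at $+\infty$. Fix a smooth nonincreasing front-like datum $u_0:\R\to[0,1]$ with $u_0\equiv 1$ on $(-\infty,-1]$ and $u_0\equiv 0$ on $[0,\infty)$. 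For each $n$, let $u_n$ solve \eqref{main-eqn} on $[-n,\infty)$ with $u_n(-n,\cdot)=u_0$, and let $x_n\in\R$ be the unique point with $u_n(0,x_n)=\theta$, which exists and is unique once monotonicity has been established. The shifted solutions $\tilde u_n(t,x):=u_n(t,x+x_n)$ are the objects we wish to pass to the limit.

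The comparison principle, applied to $\tilde u_n$ against space-time translates of $\phi_{\inf}$ and $\phi_{\sup}$ whose $\theta$-level sets are aligned with that of $\tilde u_n$ at time $0$, simultaneously yields (a) monotonicity $\partial_x \tilde u_n\le 0$, (b) a uniform two-sided envelope of the form described in (iv) with explicit exponential rates inherited from $\phi_{\inf}$ on the left and $\phi_{\sup}$ on the right (behind the front the equation is genuine reaction, in front of the front the equation is just the heat equation so the exponential supersolution $\theta e^{-c(x-\xi_n(t))}$ works directly), and (c) a bound $|\xi_n(t)-\xi_n(s)|\le c_{\sup}|t-s|+C$ on the $\theta$-level set, where $\xi_n$ is defined by $\tilde u_n(t,\xi_n(t))=\theta$. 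Interior parabolic Schauder estimates then give $C^{2+\alpha,1+\alpha/2}_{loc}$ bounds on $\tilde u_n$ on compact subsets of $\R\times\R$; Arzel\`a--Ascoli produces a subsequential limit $u$, which is a global-in-time classical solution of \eqref{main-eqn} with $u(0,0)=\theta$, $u_x\le 0$, inheriting the envelope $v$ of (iv), and hence a transition front with interface $\xi$ defined by $u(t,\xi(t))=\theta$. Strict monotonicity $u_x<0$ follows from the strong maximum principle applied to $u_x$ on the regions $\{u>\theta\}$ and $\{u<\theta\}$ (in the latter the equation is linear), together with continuity across the interface.

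The main technical obstacle is the uniform steepness (ii), which underlies both the smoothness of $\xi$ and the finite speed (iii). I would argue by contradiction: if there were $t_k\in\R$ with $u_x(t_k,\xi(t_k))\to 0$, the shifts $w_k(t,x):=u(t+t_k,x+\xi(t_k))$ admit, by the same parabolic estimates and uniform envelope, a locally uniform subsequential limit $w$ solving a reaction-diffusion equation of the same type with $w(0,0)=\theta$, $w_x(0,0)=0$, and $w_x\le 0$. On the half-line $\{w<\theta\}=\{x>0\}$ the equation reduces to $w_t=w_{xx}$, and Hopf's lemma forces $w_x(0,0)<0$, a contradiction. Once (ii) holds, the implicit function theorem gives that $\xi$ is $C^1$ with
\begin{equation*}
\xi'(t)=-\frac{u_t(t,\xi(t))}{u_x(t,\xi(t))},
\end{equation*}
where the numerator is bounded uniformly in $t$ by parabolic regularity and the denominator is uniformly bounded away from $0$ by (ii); this yields (iii).

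For part (2), the $T$-periodicity $f(t+T,u)=f(t,u)$ implies that together with $u$, the time shift $u(t+T,x)$ is also a transition front of \eqref{main-eqn}. Invoking the uniqueness up to phase shift of critical transition fronts (Lemmas \ref{lm-critical-traveling-wave1}--\ref{lm-critical-traveling-wave2}), and passing to the critical transition front obtained above (which exists once any transition front does), there is $a\in\R$ such that $u(t+T,x)=u(t,x-a)$ for all $t,x$. Setting $c:=-a/T$ and $\psi(t,x):=u(t,x+ct)$, a direct computation shows that $\psi$ satisfies the stated PDE, the boundary conditions at $\pm\infty$ follow from the envelope $v$, and $\psi(t+T,\cdot)=\psi(t,\cdot)$ is exactly the relation $u(t+T,x)=u(t,x-a)$ rewritten in the moving frame, completing the proof of the theorem.
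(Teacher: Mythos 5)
Your overall architecture (approximating solutions from far past, parabolic regularity, Arzel\`a--Ascoli, then upgrade qualitative properties) matches the paper, and your treatment of part (2) and of strict monotonicity $u_x<0$ is essentially correct. But the central technical step is dispatched incorrectly, and once it fails the rest of (ii), (iii), (iv) falls with it.

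\textbf{The gap.} You claim that comparison of $\tilde u_n$ against space-time translates of $\phi_{\inf}$ and $\phi_{\sup}$, aligned at time $0$, ``simultaneously yields'' a uniform two-sided envelope of the form in (iv), and in particular that the exponential supersolution $\theta e^{-c(x-\xi_n(t))}$ ``works directly'' ahead of the interface. Neither statement is correct, and this is precisely the core difficulty the paper is about. Two problems:

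First, comparison must be imposed at the initial time $t=-n$, not at $t=0$. Ordering $u_0$ between translates of $\phi_{\inf}$ and $\phi_{\sup}$ at $t=-n$ yields at later times
$\phi_{\inf}(x-a-c_{\inf}(t+n))\le u_n(t,x)\le\phi_{\sup}(x-b-c_{\sup}(t+n))$. In general $c_{\sup}>c_{\inf}$ (strict whenever $f_{\inf}<f_{\sup}$ somewhere), so at $t=0$ the two $\theta$-level sets are separated by a distance of order $(c_{\sup}-c_{\inf})n\to\infty$. After recentering $u_n$ so its own $\theta$-level is at $0$, one side of this ``envelope'' runs off to infinity. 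No uniform-in-$n$ (hence no uniform-in-$t$ for the limit) envelope comes out.

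Second, $\theta e^{-c(x-\xi_n(t))}$ is a supersolution of the heat equation on the region $\{x>\xi_n(t)\}$ only if $\xi_n'(t)\ge c>0$. But in a genuinely time-heterogeneous medium $\xi_n'(t)$ has no fixed sign: the interface oscillates. (This is exactly what makes the ignition time-heterogeneous case different from the space-heterogeneous one, where time-monotonicity of $u$ forces the interface to move one way.) Because $\xi_n$ can move left, the naive exponential supersolution is not a supersolution at all; the decay estimate ahead of the interface has to be recovered by a delicate sliding/iteration argument built on two non-trivial inputs that your proposal never establishes: (a) the interface moves right on average, $\xi_\theta(t+T_*;s)-\xi_\theta(t;s)\ge h_*>0$, which in the paper comes from pushing with a bistable traveling wave and its asymptotic stability; and (b) the width between level sets $\xi_{\lambda_1}(t;s)$ and $\xi_{\lambda_2}(t;s)$ stays uniformly bounded, which is a theorem in its own right and needs the comparison against $e^{-\lambda_\kappa x}$-type KPP supersolutions combined with the bistable lower bound.

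These omissions also undermine your treatment of (ii): the compactness argument that shifts by $t_k$ and extracts a limit $w$ with $w_x(0,0)=0$ only reaches a contradiction if the limit $w$ is itself a genuine front (so that Hopf's lemma has something to bite on). That nondegeneracy is exactly the uniform envelope (iv) that has not been established, so the limit $w$ could in principle be the constant $\theta$, and no contradiction follows. Similarly, your claimed bound $|\xi_n(t)-\xi_n(s)|\le c_{\sup}|t-s|+C$ presupposes an envelope at time $s$ uniform in $n$; without it, the argument is circular.

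In short: you have the right scaffolding, but you have treated the heart of the matter --- boundedness of interface width, the rightward average propagation of the interface despite oscillations, and the uniform exponential decay ahead of the interface --- as if it were an immediate consequence of comparison with homogeneous waves. It is not, and the paper's Sections~3--5 exist precisely to bridge that gap.
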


Clearly, due to the space homogeneity of \eqref{main-eqn}, if $u(t,x)$ is a transition front of \eqref{main-eqn}, then any space translation of $u(t,x)$ is also a transition front. All these consists of a family of transition fronts propagating to the right. By space reflection, we obtain another family propagating to the left. We see that the transition front constructed in Theorem \ref{thm-transition-wave} has a time-dependent profile given by $\psi(t,x)=u(t,x+\xi(t))$, which is a solution of
\begin{equation}\label{eqn-time-profile}
\begin{cases}
\psi_{t}=\psi_{xx}+\xi'(t)\psi_{x}+f(t,\psi),\cr
\lim_{x\ra-\infty}\psi(t,x)=1,\,\,\lim_{x\ra\infty}\psi(t,x)=0\,\,\text{uniformly in}\,\,t\in\R.
\end{cases}
\end{equation}

We then study front propagation phenomena in reaction-diffusion equations in random media, that is,
\begin{equation}
\label{random-eq}
u_t=u_{xx}+f(\sigma_{t}\omega,u),\quad x\in \R, \,\, t\in\R,
\end{equation}
where $\omega\in\Omega$, $((\Omega,\mathcal{F},P),\{\sigma_{t}\}_{t\in\R})$ is a metric dynamical system
(i.e. $(\Omega,\mathcal{F},P)$ is a probability space, the mapping $(t,\omega)\mapsto \sigma_t(\omega):\R\times\Omega\ra\R$ is measurable,
 $\sigma_t\circ\sigma_s=\sigma_{t+s}$ for any $s,t\in\R$, and $P(\sigma_t F)=P(F)$ for any $t\in\R$ and $F\in\mathcal{F}$) and $f:\Omega\times\R\to\R$ satisfies

\begin{itemize}
\item[\rm(H3)] $f:\Omega\times\R\to\R$ is measurable, and for each $\omega\in\Omega$, $f^\omega(t,u)=f(\theta_{t}\omega,u)$ satisfies $\rm(H1)$ and $\rm(H2)$.
\end{itemize}

We look for random traveling wave solutions of \eqref{random-eq} in the following sense (see \cite{Sh04}).

\begin{defn}
\label{random-wave-def} A family $\{u(t,x;\omega)\}_{\omega\in\Omega}$ of global-in-time solutions of \eqref{random-eq} is called a
{\rm random traveling wave} if there are measurable functions $\Psi:\R\times\Om\to\R$ and $\xi:\R\times\Om\to \R$ such that
$$
u(t,x;\omega)=\Psi(x-\xi(t;\omega),\sigma_t \omega),
$$
and for each fixed $\omega\in\Omega$, $u(t,x;\omega)$ is a transition front of \eqref{random-eq}, that is,
$$
\lim_{x\to -\infty}\Psi(x,\sigma_t \omega)=1,\quad \lim_{x\to\infty}\Psi(x,\sigma_t\omega)=0
$$
uniformly in $t\in\R$.
\end{defn}

We prove

\begin{thm}
\label{thm-random-wave} Assume $\rm(H3)$.
\begin{itemize}
\item[\rm(1)] Equation \eqref{random-eq} admits a random traveling wave $u(t,x;\omega)=\Psi(x-\xi(t;\omega),\sigma_t \omega)$, where the  function $\xi:\R\times \Omega\ra\R$ is continuously differentiable in $t\in\R$ and satisfies $u(t,\xi(t,\omega);\omega)=\theta$ for all $t\in\R$ and $\omega\in\Omega$.

\item[\rm(2)] If $((\Omega,\mathcal{F},P),\{\sigma_t\}_{t\in\R})$ is an ergodic metric dynamical system, then there are $c^*\in\R$ and $\Psi^*(\cdot)\in C_{\rm unif}^b (\R,\R)$ such that for a.e.
$\omega\in\Omega$,
$$
\lim_{t\to\infty}\frac{\xi(t;\omega)}{t}=c^{*},
$$
$$
\lim_{t\to\infty}\frac{1}{t}\int_0^ t\Psi(x,\sigma_s\omega)ds=\Psi^*(x),\quad \forall \,x\in\R,
$$
and
$$
\lim_{x\to -\infty}\Psi^*(x)=1,\quad \lim_{x\to\infty} \Psi^*(x)=0.
$$

\item[\rm(3)]  If $((\Om,\mathcal{F},P),\{\sigma_t\}_{t\in\R})$ is a compact flow, then there is $\Om_0\subset\Om$ with $\si_t(\Om_0)=\Om_0$ such that $\Om_0$ is a residual subset of $\Om$ (i.e., $\Om_{0}$ is the intersection of countably many open dense subsets of $\Om$) and  the map $\om\in\Om\mapsto \Psi(\cdot,\om)\in C_{\rm unif}^b(\R,\R)$ is continuous at $\om\in\Om_0$.
\end{itemize}
\end{thm}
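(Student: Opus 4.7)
The plan is to handle the three parts of Theorem~\ref{thm-random-wave} in sequence: Part~(1) via a measurable $\om$-by-$\om$ application of Theorem~\ref{thm-transition-wave}, Part~(2) as an ergodic-theoretic consequence of the resulting cocycle, and Part~(3) as a Baire-category argument.

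For Part~(1), I fix $\om\in\Om$ and set $f^\om(t,u):=f(\si_t\om,u)$, which satisfies (H1)--(H2) by (H3). Theorem~\ref{thm-transition-wave} produces a transition front of \eqref{random-eq}; I pick a critical one $u^\om(t,x)$ via Lemma~\ref{lm-critical-traveling-wave1}, and normalize by spatial translation so that $\xi^\om(0)=0$, i.e., $u^\om(0,0)=\theta$. Then define $u(t,x;\om):=u^\om(t,x)$, $\Psi(x,\om):=u^\om(0,x)$, and $\xi(t;\om):=\xi^\om(t)$. Applying Lemma~\ref{lm-critical-traveling-wave2} to the two critical transition fronts $(t,x)\mapsto u^\om(t+s,x)$ and $(t,x)\mapsto u^{\si_s\om}(t,x)$ of the common equation $v_t=v_{xx}+f^{\si_s\om}(t,v)$ gives $u^\om(t+s,x)=u^{\si_s\om}(t,x-\xi^\om(s))$, which rearranges to the representation $u(t,x;\om)=\Psi(x-\xi(t;\om),\si_t\om)$ and the additive cocycle $\xi(t+s;\om)=\xi(s;\om)+\xi(t;\si_s\om)$. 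Joint measurability of $\Psi$ and $\xi$ follows from measurability of $f$ in $\om$ and continuous dependence of parabolic solutions on parameters, via the uniqueness characterization.

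For Part~(2), the cocycle together with the finite-speed property (iii) shows that $g(\om):=\xi'(0;\om)$ is bounded and measurable and that $\xi(t;\om)=\int_0^t g(\si_s\om)\,ds$; Birkhoff's ergodic theorem then yields $\xi(t;\om)/t\to c^{*}:=\E[g]$ a.e. For the profile, fix $x\in\R$; applying Birkhoff to the bounded measurable function $\om\mapsto\Psi(x,\om)$ gives $\frac{1}{t}\int_0^t\Psi(x,\si_s\om)\,ds\to \Psi^{*}(x):=\E[\Psi(x,\cdot)]$ on a full-measure set. Parabolic regularity combined with (H2) produces a Lipschitz bound on $x\mapsto\Psi(x,\om)$ uniform in $\om$, so taking a countable dense set of $x$'s and extending by equicontinuity upgrades the result to a common full-measure set on which convergence holds for all $x$. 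The limits $\Psi^{*}(-\infty)=1$, $\Psi^{*}(+\infty)=0$, and the membership $\Psi^{*}\in C^b_{\rm unif}(\R,\R)$ follow from the uniform decaying estimate (iv), the uniform Lipschitz bound, and dominated convergence.

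For Part~(3), compactness of the flow together with parabolic regularity and the uniform bounds (ii)--(iv) ensures that for any $\om_n\to\om$ some subsequence of $\Psi(\cdot,\om_n)$ converges locally uniformly to a transition front $\widetilde\Psi(\cdot,\om)$ of the limiting equation; the uniform decay (iv) persists in the limit and identifies $\widetilde\Psi(\cdot,\om)$ as a critical transition front at $\om$, so by uniqueness (Lemma~\ref{lm-critical-traveling-wave2}) and the phase normalization $\widetilde\Psi(0,\om)=\theta$ one concludes $\widetilde\Psi(\cdot,\om)=\Psi(\cdot,\om)$. Let $\Om_0$ denote the continuity points of $\om\mapsto\Psi(\cdot,\om)\in C^b_{\rm unif}(\R,\R)$; this is automatically a $G_\de$ since the oscillation $\om\mapsto\mathrm{osc}(\om)$ is upper semicontinuous. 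Invariance under $\si_t$ follows from the identity $\Psi(y,\si_t\om)=u(t,y+\xi(t;\om);\om)$ together with continuity of the PDE flow and of the $\theta$-level position (via uniform steepness (ii)). For density, I would show that each closed set $\{\mathrm{osc}\ge 1/n\}$ is nowhere dense, using the above identification of subsequential limits with $\Psi(\cdot,\om)$ to rule out persistent oscillation. The \emph{principal technical obstacle} is precisely this identification: arguing that subsequential limits retain the decay and steepness estimates needed for criticality, and controlling the phase normalization through the limit.
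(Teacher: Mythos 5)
Your parts (1)--(2) essentially reconstruct the machinery that the paper itself simply imports from \cite{Sh04}: for each $\om$ take the critical transition front normalized by $u^\om(0,0)=\theta$, use uniqueness of critical fronts to get the representation $u^\om(t+s,x)=u^{\si_s\om}(t,x-\xi^\om(s))$ and the additive cocycle, then apply Birkhoff. That outline is sound (note the two lemma citations are swapped: Lemma \ref{lm-critical-traveling-wave2} is existence, Lemma \ref{lm-critical-traveling-wave1} is uniqueness; and the time-shifted front must be checked to be critical, which is the same argument as in the periodic case of Theorem \ref{thm-transition-wave}(2)). The one substantive point in part (1) that you dispatch in a single sentence is joint measurability of $\Psi$ and $\xi$: the critical front is produced by compactness/diagonal arguments and a criticality selection, not by solving an initial value problem with measurable data, so measurability does not follow from ``continuous dependence on parameters'' alone; one must exhibit $\Psi(\cdot,\om)$ as a limit along an $\om$-independent scheme (e.g.\ the approximants $u(t,x;s)$ with the measurable normalization $x_s(\om)$) before uniqueness can be invoked. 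This is precisely the content carried by \cite[Theorem A]{Sh04}, which the paper cites instead of reproving.

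The genuine gap is in part (3). You assert that any locally uniform subsequential limit of $\Psi(\cdot,\om_n)$, $\om_n\to\om$, is a \emph{critical} transition front of the $\om$-equation because ``the uniform decay (iv) persists in the limit.'' The uniform decay, steepness and finite-speed bounds do pass to the limit and show the limit is a transition front with $\wt\Psi(0,\om)=\theta$, but they do not yield criticality: criticality is a comparison against \emph{all} transition fronts of the $\om$-equation, and nothing relates those to transition fronts of the $\om_n$-equations, so a limit of critical fronts need not be critical. Worse, if your identification held for every sequence $\om_n\to\om$, it would prove continuity of $\om\mapsto\Psi(\cdot,\om)$ at \emph{every} $\om\in\Om$, which is strictly stronger than the residual-continuity statement being proved (and than \cite[Theorem B]{Sh04}) --- a sign the step cannot be correct as stated. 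Your density argument for the sets $\{\mathrm{osc}\ge 1/n\}$ being nowhere dense rests on the same unproved identification, and your closing sentence concedes it is unresolved. The route actually used in \cite{Sh04} is different: one establishes a semicontinuity (Baire-class-one) property of $\om\mapsto\Psi(\cdot,\om)$ via an extremal characterization of the critical wave, from which the set of continuity points is automatically residual, and invariance of $\Om_0$ follows from the cocycle identity. Without an argument of this type, part (3) is not established.
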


We remark that if $\Omega=\{f(\cdot+\tau,\cdot)|\tau\in\R\}$ equipped with open compact topology and $\sigma_t g(\cdot,\cdot)=g(\cdot+t,\cdot)$ for $g\in\Omega$ and $t\in\R$, where $f(t,u)$ satisfies (H1), (H2) and $f(t+T,u)=f(t,u)$,
then Theorem \ref{thm-random-wave} (3) implies that \eqref{main-eqn} admits a periodic traveling wave solution, which recovers Theorem \ref{thm-transition-wave} (2). In the case that $\Omega={\rm hull}(f):={\rm cl}\{f(\cdot+\tau,\cdot)|\tau\in\R\}$ with open compact topology and $\sigma_t g(\cdot,\cdot)=g(\cdot+t,\cdot)$ for $g\in \Omega$ and $t\in\R$, where $f(t,u)$ satisfies $\rm(H1)$ and $\rm(H2)$ and is almost periodic in $t$ uniformly with respect to $u$, whether \eqref{main-eqn} admits almost periodic traveling wave solutions remains open. This issue together with the uniqueness and stability of transition fronts of \eqref{main-eqn} are studied in \cite{SS-1}.

We also remark that time-periodic traveling waves were first investigated by Alikakos, Bates and Chen (see \cite{AlBaCh99}) in time periodic bistable media. For time heterogeneous bistable equations, transition fronts with a time-dependent profile satisfying \eqref{eqn-time-profile} and their uniqueness and stability have been investigated by Shen (see e.g. \cite{Sh99-1,Sh99-2,Sh04,Sh06}). There are  similar results for time heterogeneous KPP equations (see e.g. \cite{NaRo12,Sh11,RR14}). Transition fronts have also been proven to exist in space heterogeneous Fisher-KPP type equations (see \cite{NRRZ12,Zl12}). But it is far from being clear in space heterogeneous media of bistable type due to the wave blocking phenomenon (see \cite{LeKe00}) except the one established in \cite{NoRy09} under additional assumptions.

To this end, we comment on the differences  between the analysis in the present paper and that in \cite{Sh06} and \cite{NoRy09}. In \cite{Sh06}, transition fronts of the following equation
\begin{equation}\label{eqn-time-hetero-bistable}
u_{t}=u_{xx}+f_{B}(t,u),\quad (t,x)\in\R\times\R
\end{equation}
in time heterogeneous bistable case were studied. In particular,  $u\equiv0$ and $u\equiv1$ are two stable solutions and $\theta(t)$, $t\in\R$ is the unstable solution between $0$ and $1$. The method used in \cite{Sh06} has a strong dynamical system favor. More precisely, instead of focusing on \eqref{eqn-time-hetero-bistable}, the following family
$$
u_{t}=u_{xx}+f_{B}(t+s,u),\quad (t,x)\in\R\times\R,\quad s\in\R
$$
were treated as a whole. Since the analysis in \cite{Sh06} heavily relies on the uniform instability of the solution $\theta(t)$, the method can not be applied in our case.

In \cite{NoRy09}, transition fronts in space-heterogeneous ignition equations were treated by studying the following equation
\begin{equation}\label{eqn-time-hetero-ignition}
u_{t}=u_{xx}+f_{I}(x,u),\quad (t,x)\in\R\times\R
\end{equation}
with the neutral stable solution $u\equiv0$, the stable solution $u\equiv1$ and the ignition temperature $\theta\in(0,1)$. The proof of the existence of transition fronts is constructive via the construction of approximating solution $u^{n}(t,x)$, where $u^{n}(t,x)$ is the unique solution of \eqref{eqn-time-hetero-ignition} with well-constructed initial data at initial time $t=-n$. An important property of $u^{n}(t,x)$ is the time monotonicity, i.e., $u^{n}_{t}(t,x)>0$, which implies $\dot{\xi}^{n}(t)>0$, where $\xi^{n}(t)$ is the interface location defined by
\begin{equation*}
\xi^{n}(t)=\sup\{x\in\R|u^{n}(t,x)=\theta\}.
\end{equation*}
The fact that $\xi^{n}(t)$ is increasing plays a very important role in the analysis done in \cite{NoRy09}.

In our case, we first construct approximating solutions $u^{n}(t,x)$ as in \cite{NoRy09}, but our approximating solutions satisfy space monotonicity, i.e., $u_{x}^{n}(t,x)<0$, instead of time monotonicity. We then look at the interface location $\xi^{n}(t)$ defined to be the unique point such that $u^{n}(t,\xi^{n}(t))=\theta$. However, due to the time-dependence of the nonlinearity $f(t,u)$, $\xi^{n}(t)$ oscillates, and therefore, the analysis in \cite{NoRy09} does not apply. A major part of the present paper is devoted to the analysis of the propagation of $\xi^{n}(t)$ with oscillations.

The rest of the paper is organized as follows. In Section \ref{sec-entire-sol}, we construct a global-in-time solution of \eqref{main-eqn} as the limit of approximating solution sequence. Section \ref{sec-bd-interface-width} is devoted to the boundedness of interface width. In Section \ref{Non-Flat Estimate Near Ignition Temperature}, we prove that the derivative of the approximating solution sequence near the ignition temperature is uniformly negative. In Section \ref{uniform-estimate}, we establish uniform estimates behind and ahead of the interface for the approximating solution sequence. In Section \ref{sec-transition-wave-const}, we prove the existence of transition fronts of \eqref{main-eqn} and finish the proof Theorem \ref{thm-transition-wave}. In Section \ref{sec-random-wave}, we investigate  random traveling wave solutions of \eqref{random-eq} and prove Theorem \ref{thm-random-wave}.


\section{Construction of Global-in-Time Solutions}\label{sec-entire-sol}

In this section, we construct a global-in-time solution of \eqref{main-eqn}. Throughout this section, we assume (H1) and (H2).

First, we consider the space-time homogeneous equation
\begin{equation}\label{eqn-ig-inf}
u_{t}=u_{xx}+f_{\inf}(u).
\end{equation}
By $\rm(H2)$, $f_{\inf}$ is of standard ignition type. Classical results  (see e.g. \cite{ArWe75,ArWe78,FiMc77}) ensure the existence of a unique constant $c_{\inf}>0$ and a twice continuously differentiable function $\phi$ satisfying
\begin{equation}\label{traveling-homo}
\begin{cases}
\phi_{xx}+c_{\inf}\phi_{x}+f_{\inf}(\phi)=0,\cr
\phi_{x}<0,\quad\lim_{x\ra-\infty}\phi(x)=1\,\,\text{and}\,\,\lim_{x\ra\infty}\phi(x)=0
\end{cases}
\end{equation}
such that $\phi(x-c_{\inf}t)$, $x\in\R$, $t\in\R$ and its translations are traveling wave solutions of \eqref{eqn-ig-inf}. Thus, we may assume, without loss of generality, that $\phi(0)=\theta$. Since $f_{\inf}(u)=0$ for $u\in[0,\theta]$, a direct computation gives
\begin{equation}\label{explicit-sol}
\phi(x)=\theta e^{-c_{\inf}x},\quad x\geq0.
\end{equation}

\begin{lem}\label{lem-entire-sol}
For any $s<0$, there is a unique $x_{s}\in\R$ such that the solution $u(t,x;s)$, $t\geq s$ of \eqref{main-eqn} with $u(s,x;s)=\phi(x-x_{s})$ satisfies $u(0,0;s)=\theta$. Moreover, $x_{s}\ra-\infty$ as $s\ra-\infty$.
\end{lem}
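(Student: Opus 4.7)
The plan is to fix $s<0$ and analyze the map $F\colon\R\to(0,1)$ defined by $F(y)=u_{y}(0,0;s)$, where $u_{y}(t,x;s)$ denotes the unique classical solution of \eqref{main-eqn} on $[s,\infty)\times\R$ with initial datum $\phi(\cdot-y)$ at time $s$. The strategy is to show that $F$ is continuous and strictly increasing on $\R$, with $\lim_{y\to-\infty}F(y)=0$ and $\lim_{y\to\infty}F(y)=1$; the intermediate value theorem together with strict monotonicity then deliver a unique $x_{s}\in\R$ with $F(x_{s})=\theta$.

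Well-definedness, positivity $F(y)\in(0,1)$, and continuity follow from standard parabolic theory: by $\rm(H1)$ the constants $0$ and $1$ are sub- and super-solutions of \eqref{main-eqn}, so comparison with $\phi(\cdot-y)\in(0,1)$ keeps $u_{y}(t,x;s)\in(0,1)$ for all $t\geq s$, and continuous dependence on initial data gives continuity of $F$ in $y$. For strict monotonicity, if $y_{1}<y_{2}$ then $\phi(x-y_{1})<\phi(x-y_{2})$ for every $x\in\R$ by the strict monotonicity of $\phi$; comparison gives $u_{y_{1}}\leq u_{y_{2}}$ on $[s,\infty)\times\R$, and the strong maximum principle applied to the difference $u_{y_{2}}-u_{y_{1}}$ (which solves a linear parabolic equation with a bounded zero-order term) yields strict inequality at $(0,0)$.

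For the two limits I compare $u_{y}$ with explicit sub- and super-solutions. Since $f(t,u)\geq f_{\inf}(u)$ by $\rm(H2)$, the travelling wave $\underline{u}(t,x)=\phi(x-y-c_{\inf}(t-s))$ of \eqref{eqn-ig-inf} is a sub-solution of \eqref{main-eqn} with matching datum at $t=s$, so $u_{y}(t,x;s)\geq\phi(x-y-c_{\inf}(t-s))$; evaluating at $(0,0)$ gives $F(y)\geq\phi(-y-c_{\inf}|s|)\to 1$ as $y\to\infty$. For the opposite limit, set $\lambda=c_{\inf}$ and choose $C>0$ large enough that $C\lambda-\lambda^{2}\geq K$, where $K$ is a constant with $f_{\sup}(u)\leq Ku$ for all $u\in[\theta,1]$ (such a $K$ exists because $f_{\sup}$ is bounded on $[\theta,1]$ and $u\geq\theta>0$ there). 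A direct computation shows that $e^{-\lambda(x-y-C(t-s))}$ is a classical super-solution of $u_{t}=u_{xx}+f_{\sup}(u)$ wherever it lies below $1$; since $1$ is itself a super-solution and the minimum of two super-solutions remains a super-solution in the appropriate weak sense, the function $\bar{u}(t,x)=\min\{1,e^{-\lambda(x-y-C(t-s))}\}$ is a super-solution of \eqref{main-eqn}. The initial bound $\bar{u}(s,\cdot)\geq\phi(\cdot-y)$ follows from $\theta<1$, and parabolic comparison yields $F(y)\leq e^{\lambda(y+C|s|)}\to 0$ as $y\to-\infty$.

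For the ``moreover'' statement, I specialize the sub-solution inequality with $y=x_{s}$ at $(t,x)=(0,0)$, obtaining
\begin{equation*}
\theta \;=\; F(x_{s}) \;\geq\; \phi(-x_{s}-c_{\inf}|s|).
\end{equation*}
Since $\phi$ is strictly decreasing with $\phi(0)=\theta$, this forces $-x_{s}-c_{\inf}|s|\geq 0$, that is, $x_{s}\leq -c_{\inf}|s|\to-\infty$ as $s\to-\infty$. The main technical point is the verification of the super-solution property for $\bar{u}$, particularly across the corner where the two branches of the $\min$ meet; this is standard but is the place where one must invoke a version of parabolic comparison valid for Lipschitz super-solutions.
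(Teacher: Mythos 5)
Your proposal is correct and follows essentially the same route as the paper: a lower bound by the travelling wave $\phi(x-y-c_{\inf}(t-s))$ of the $f_{\inf}$-equation, an exponential super-solution controlled by a linear bound on $f$, and continuity plus strict comparison in the shift $y$ to produce the unique $x_{s}$, with the same lower bound yielding $x_{s}\to-\infty$. The only cosmetic differences are that the paper sidesteps the corner issue of your $\min\{1,e^{-\lambda(\cdot)}\}$ barrier by choosing $M$ with $f(t,u)\le Mu$ for all $u\ge 0$ (so the untruncated exponential is already a super-solution), and your ``moreover'' step is a direct inequality $x_{s}\le -c_{\inf}|s|$ instead of the paper's argument by contradiction.
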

\begin{proof}
Fix any $s<0$. Let $u^{y}(t,x;s)$ be the solution of \eqref{main-eqn} with $u^{y}(s,x;s)=\phi(x-y)$. By comparison principle, $u^{y}(t,x;s)\geq\phi(x-y-c_{\inf}(t-s))$ for $t\geq s$. In particular, $u^{y}(0,0;s)\geq\phi(c_{\inf}s-y)$. Note that if $y>c_{\inf}s$, then $\phi(c_{\inf}s-y)>\phi(0)=\theta$ by monotonicity. Thus, $u^{y}(0,0;s)>\theta$ if $y>c_{\inf}s$.

On the other hand, let us fix some constant $M>0$ such that $f(t,u)\leq Mu$ for all $u\geq0$ and $t\in\R$. Such an $M$ exists by $\rm(H1)$ and $\rm(H2)$. Now, set $v^{y}(t,x;s)=e^{-c_{\inf}(x-y-y_{0}-c(t-s))}$ for some $y_{0}\in\R$ and $c>0$ to be chosen. By \eqref{explicit-sol}, we can easily find an $y_{0}\in\R$ such that $\phi(x-y)\leq v^{y}(s,x;s)$ for all $y\in\R$. We fix such an $y_{0}$. We compute
\begin{equation*}
(v^{y})_{t}-(v^{y})_{xx}-f(t,v^{y})=c_{\inf}(c-c_{\inf})v^{y}-f(t,v^{y}).
\end{equation*}
Thus, if we choose $c>0$ such that $c_{\inf}(c-c_{\inf})\geq M$, then $v^{y}$ is a sup-solution of \eqref{main-eqn}, which leads to $u^{y}(t,x;s)\leq v^{y}(t,x;s)=e^{-c_{\inf}(x-y-y_{0}-c(t-s))}$ by comparison principle. In particular, $u^{y}(0,0;s)\leq e^{-c_{\inf}(cs-y-y_{0})}$. Thus, $u^{y}(0,0;s)<\theta$ for $y\ll-1$. Continuity of the solution with respect to $y$ then ensures the existence of some $x_{s}$ as in the statement of the lemma.

The uniqueness follows from comparison principle. In fact, if there are $x_{s}$ and $x_{s}^{*}$ with $x_{s}\neq x_{s}^{*}$, then we have either $u^{x_{s}}(0,x;s)<u^{x_{s}^{*}}(0,x;s)$ or $u^{x_{s}^{*}}(0,x;s)<u^{x_{s}}(0,x;s)$ for all $x\in\R$ by comparison principle, since either $\phi(x-x_{s})<\phi(x-x_{s}^{*})$ or $\phi(x-x_{s}^{*})<\phi(x-x_{s})$ holds for all $x\in\R$. Hence, for different $x_{s}$ and $x_{s}^{*}$, we can not have both $u^{x_{s}}(0,0;s)=\theta$ and $u^{x_{s}^{*}}(0,0;s)=\theta$.

The ``moreover" part is a simple consequence of the estimate
\begin{equation}\label{estimate-lower-bound}
u^{x_{s}}(t,x;s)\geq\phi(x-x_{s}-c_{\inf}(t-s)).
\end{equation}
In fact, if $\inf_{s<0}x_{s}>-\infty$, then for all $s\ll0$, $u^{x_{s}}(0,x;s)\geq\phi(0-x_{s}-c_{\inf}(0-s))>\theta$. It is a contradiction.
\end{proof}

From the above lemma, we can construct a global-in-time solution.

\begin{thm}\label{thm-entire-sol}
There exists a sequence $\{s_{n}\}_{n\in\N}\subset(-\infty,0)$ with $s_{n}\ra-\infty$ as $n\ra\infty$ and a function $u(t,x)$, $x\in\R$, $t\in\R$ continuously differentiable in $t$ and twice continuously differentiable in $x$ such that for any compact $K\subset\R\times\R$, the following limits
\begin{equation*}
\begin{split}
\lim_{n\ra\infty}u(t,x;s_{n})&=u(t,x),\quad\lim_{n\ra\infty}u_{t}(t,x;s_{n})=u_{t}(t,x),\\
\lim_{n\ra\infty}u_{x}(t,x;s_{n})&=u_{x}(t,x),\quad\lim_{n\ra\infty}u_{xx}(t,x;s_{n})=u_{xx}(t,x)
\end{split}
\end{equation*}
exist and are uniform in $(t,x)\in K$. In particular, $u(t,x)$, $x\in\R$, $t\in\R$ is a global-in-time solution of \eqref{main-eqn}.
\end{thm}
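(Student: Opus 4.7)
The plan is to apply a standard parabolic compactness argument to the family $\{u(\cdot,\cdot;s)\}_{s<0}$ produced by Lemma~\ref{lem-entire-sol}, letting $s\to-\infty$. First I would establish a uniform $L^\infty$ bound: since $\phi(x-x_s)\in(0,1)$ and, by $\rm(H1)$, $f(t,0)=0=f(t,1)$, the constants $0$ and $1$ are respectively sub- and super-solutions of \eqref{main-eqn}, so the parabolic comparison principle gives $0\le u(t,x;s)\le 1$ for all $t\ge s$ and $x\in\R$. Together with $\rm(H2)$ this bounds the nonlinearity $f(t,u(t,x;s))$ uniformly in $s$.

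Next I would upgrade this $L^\infty$ control to uniform higher regularity on compact subsets of $\R\times\R$. Fix a compact $K\subset\R\times\R$; for every $s$ sufficiently negative, $K$ sits in the interior of the time strip $(s,\infty)\times\R$ on which $u(\cdot,\cdot;s)$ is defined. Interior parabolic regularity (Krylov--Safonov or $L^p$ theory) then yields uniform local H\"older bounds on $u(\cdot,\cdot;s)$ over $K$. Combined with the locally uniform Lipschitz dependence of $f$ on $u$ and the locally uniform H\"older dependence on $t$ from $\rm(H1)$, the composition $(t,x)\mapsto f(t,u(t,x;s))$ is uniformly locally H\"older continuous, so interior parabolic Schauder estimates produce uniform $C^{1+\alpha/2,\,2+\alpha}$ bounds on $K$ for the whole family.

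Finally I would run the Arzel\`a--Ascoli / diagonal extraction. Choose an exhausting sequence of compacta $K_{m}\nearrow\R\times\R$ and any sequence $s_{n}\to-\infty$. For each $m$, the previous step produces a subsequence of $\{u(\cdot,\cdot;s_{n})\}$ converging together with $u_{t}$, $u_{x}$, $u_{xx}$ uniformly on $K_{m}$; a diagonal selection yields a single subsequence, still denoted $\{s_{n}\}$, along which $u(t,x;s_{n})$, $u_{t}(t,x;s_{n})$, $u_{x}(t,x;s_{n})$ and $u_{xx}(t,x;s_{n})$ converge uniformly on every compact subset of $\R\times\R$ to a limit $u(t,x)$ and its derivatives. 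Passing to the limit in \eqref{main-eqn}, using continuity of $f$, identifies $u(t,x)$ as a classical global-in-time solution.

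The argument is genuinely routine once the uniform $L^\infty$ bound is in hand, and the only point requiring any care is verifying that the right-hand side is uniformly H\"older continuous along the sequence so that Schauder applies; the H\"older-in-$t$ hypothesis in $\rm(H1)$ combined with the preliminary local H\"older regularity of $u(\cdot,\cdot;s)$ makes this immediate, so I do not anticipate a substantive obstacle.
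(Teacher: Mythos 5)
Your proof is correct and is essentially the argument the paper has in mind: the paper's one-line proof cites Lemma~\ref{lem-entire-sol}, a priori parabolic estimates (via \cite{Fri64}), Arzel\`a--Ascoli, and the diagonal argument, and your write-up simply fills in the standard details (uniform $0\le u\le1$ bound from comparison, interior H\"older and then Schauder estimates to control $u$, $u_t$, $u_x$, $u_{xx}$ uniformly on compacta, diagonal extraction, and passage to the limit in the equation). No substantive differences from the intended route.
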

\begin{proof}
It is a consequence of Lemma \ref{lem-entire-sol}, a priori estimates for parabolic equations (see e.g. \cite{Fri64}), Arzela-Ascoli theorem and the diagonal argument.
\end{proof}

The global-in-time solution $u(t,x)$ constructed in Theorem \ref{thm-entire-sol} is a candidate for the expected transition front. All we need is to show that this solution satisfies certain non-degenerate and uniform decaying estimates. This, however, can be deduced from the boundedness of interface width, the steepness estimate, and the uniform decaying estimates of the approximating solutions $u(t,x;s)$, which are the objectives of Section \ref{sec-bd-interface-width}, Section \ref{Non-Flat Estimate Near Ignition Temperature} and Section \ref{uniform-estimate}, respectively.  In Section \ref{sec-transition-wave-const}, we finish the construction of transition fronts.

In the rest of this section, we derive some fundamental properties of $u(t,x;s)$.

\begin{lem}\label{lem-basic-prop}
For any $s<0$ and $t\geq s$, there hold the following properties:
\begin{equation*}
\lim_{x\ra-\infty}u(t,x;s)=1,\quad\lim_{x\ra\infty}u(t,x;s)=0\quad\text{and}\quad u_{x}(t,x;s)<0.
\end{equation*}
\end{lem}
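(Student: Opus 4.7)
The plan is to read off the limits at $\pm\infty$ from bounds that were already constructed in the proof of Lemma \ref{lem-entire-sol}, and to establish strict spatial monotonicity by comparing $u(t,\cdot;s)$ with its own rightward translates, using the parabolic comparison principle and the strong maximum principle.

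For the limit as $x\to-\infty$, inequality \eqref{estimate-lower-bound} gives $u(t,x;s)\geq\phi(x-x_{s}-c_{\inf}(t-s))$, whose right-hand side tends to $1$ as $x\to-\infty$. An upper bound is supplied by the constant solution $v\equiv 1$: since $f(t,1)=0$ by $\rm(H1)$, $v$ solves \eqref{main-eqn}, and $u(s,\cdot;s)=\phi(\cdot-x_{s})\leq 1$, so comparison gives $u(t,x;s)\leq 1$ for all $t\geq s$. These two bounds together force $u(t,x;s)\to 1$. For the limit as $x\to+\infty$, the exponential super-solution $v^{x_{s}}(t,x;s)=e^{-c_{\inf}(x-x_{s}-y_{0}-c(t-s))}$ built in the proof of Lemma \ref{lem-entire-sol} dominates $u(t,x;s)$ from above and tends to $0$ as $x\to+\infty$; together with the trivial lower bound $u\geq 0$ obtained from comparison with the zero solution, this forces $u(t,x;s)\to 0$.

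For the strict monotonicity, fix $h>0$ and set $w_{h}(t,x):=u(t,x-h;s)-u(t,x;s)$. Since $\phi$ is strictly decreasing, $w_{h}(s,\cdot)>0$ on all of $\R$. The Lipschitz continuity of $f(t,\cdot)$ guaranteed by $\rm(H1)$ lets me write $w_{h}$ as a solution of a linear parabolic equation $(w_{h})_{t}=(w_{h})_{xx}+c(t,x)w_{h}$ with $|c|$ uniformly bounded, so the comparison principle yields $w_{h}>0$ for all $t\geq s$. Dividing by $h$ and letting $h\to 0^{+}$ gives $-u_{x}(t,x;s)\geq 0$. To upgrade this to $u_{x}<0$, I apply the parabolic strong maximum principle to $u_{x}$, whose linearized equation has bounded coefficients by parabolic regularity and the Lipschitz dependence of $f$ on $u$; since $u_{x}(s,\cdot)=\phi'(\cdot-x_{s})<0$, the strong maximum principle rules out any interior zero of $u_{x}$.

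The step I expect to be most delicate is the upgrade from $u_{x}\leq 0$ to $u_{x}<0$, because $f$ is only Lipschitz (not $C^{1}$) across $u=\theta$, so the coefficient in the linearized equation for $u_{x}$ need not be continuous. I would handle this either by applying the strong maximum principle directly to the divided differences $w_{h}$ before passing to the limit, or by approximating $f$ by smooth nonlinearities, running the argument classically, and then passing to the limit in the approximation.
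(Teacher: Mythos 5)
Your treatment of the two limits and of the non-strict monotonicity is correct and coincides with the paper's: the limit at $+\infty$ comes from the two-sided bound $\phi(x-x_{s}-c_{\inf}(t-s))\leq u(t,x;s)\leq e^{-c_{\inf}(x-x_{s}-y_{0}-c(t-s))}$, the limit at $-\infty$ from the lower bound \eqref{estimate-lower-bound} together with $u\leq 1$, and comparison applied to $u(t,x+y;s)-u(t,x;s)$ gives that $u(t,\cdot;s)$ is strictly decreasing, hence $u_{x}\leq 0$. The genuine gap is in the upgrade to $u_{x}(t,x;s)<0$. Your primary route, the strong maximum principle applied to $u_{x}$, needs $u_{x}$ to solve a linear parabolic equation, and this is exactly what is not available: by (H1), $f(t,\cdot)$ is only Lipschitz across $u=\theta$ (it is $C^{1}$ only for $u\geq\theta$ and vanishes for $u\leq\theta$), so the linearized equation for $u_{x}$ does not hold classically near the level set $\{u=\theta\}$, which is precisely where the interface lives. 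You flag this, but neither of your proposed remedies closes it: applying the strong maximum principle to the divided differences $w_{h}$ only yields $w_{h}>0$ for every $h>0$, i.e.\ strict monotonicity of $u(t,\cdot;s)$, which does not imply $u_{x}<0$ pointwise (think of $-x^{3}$ at the origin); and approximating $f$ by smooth nonlinearities gives $u^{\ep}_{x}<0$ for each approximation, but strict negativity is exactly the kind of open condition that is lost in the limit, so you again recover only $u_{x}\leq 0$.

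The paper circumvents this with a different idea: assuming $u_{x}(t_{0},x_{0};s)=0$, it compares $u$ with the spatially homogeneous solution $u^{o}(t;t_{0},a)$ of $u_{t}=f(t,u)$ through $a=u(t_{0},x_{0};s)$. The difference $v=u-u^{o}$ is a difference of two solutions of \eqref{main-eqn}, so it genuinely satisfies a linear equation $v_{t}=v_{xx}+q(t,x)v$ with bounded $q$ (no differentiability of $f$ needed), and $(t_{0},x_{0})$ is a degenerate zero of $v(t_{0},\cdot)$ since $v=v_{x}=0$ there. Angenent's zero-number theorem \cite[Theorem B]{Ang88} then forces $v(t_{0}-\de,\cdot)$ to have at least two zeros near $x_{0}$, contradicting the exactly-one-zero property that follows from the already established strict monotonicity of $u(t_{0}-\de,\cdot;s)$ (the constant $u^{o}$ can cross a strictly decreasing profile at most once). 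If you want to stay closer to your own plan, a workable alternative is to apply the parabolic Harnack inequality to the nonnegative solutions $w_{h}/h$ of the linear equations with zeroth-order coefficients bounded uniformly in $h$, and pass to the limit $h\ra 0^{+}$ in the Harnack estimate to propagate a zero of $-u_{x}$ backward in time to $t=s$, contradicting $\phi'<0$; but some such additional ingredient is indispensable, and as written your argument does not establish $u_{x}<0$.
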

\begin{proof}
The limit at $+\infty$ follows from the following two-sided estimates
\begin{equation}\label{aprior-estimate-two-side1}
\phi(x-x_{s}-c_{\inf}(t-s))\leq u(t,x;s)\leq e^{-c_{\inf}(x-x_{s}-y_{0}-c(t-s))},
\end{equation}
where the lower bound and the upper bound are constructed in Lemma \ref{lem-entire-sol}. The limit at $-\infty$ follows from the following two-sided estimates
\begin{equation*}\label{aprior-estimate-two-side11}
\phi(x-x_{s}-c_{\inf}(t-s))\leq u(t,x;s)\leq 1,
\end{equation*}
where the lower bound is constructed in Lemma \ref{lem-entire-sol} and the upper bound is due to the fact that $u\equiv 1$ is a solution of \eqref{main-eqn}
and $u(s,x;s)<1$ for all $x\in\R$.

We now show $u_{x}(t,x;s)<0$. Clearly, it is the case if $t=s$. So we assume $t>s$. Since $\phi(x-x_{s})$ is strictly decreasing, we apply maximum principle to $u(t,x+y;s)-u(t,x;s)$ for any $y>0$ to conclude that $u(t,x+y;s)<u(t,x;s)$. That is, $u(t,x;s)$ is strictly decreasing. For contradiction, suppose $u_{x}(t_{0},x_{0};s)=0$ for some $t_{0}>s$ and $x_{0}\in\R$. Let $u^{o}(t;t_{0},a)$ be the solution of the ODE $u_{t}=f(t,u)$ with $u^{o}(t_{0};t_{0},a)=a=u(t_{0},x_{0};s)$. Note $u^{o}(t;t_{0},a)$ extends naturally for $t<t_{0}$. Let $v(t,x;s)=u(t,x;s)-u^{o}(t;t_{0},a)$. It satisfies $v(t_{0},x_{0};s)=0$, $v_{x}(t_{0},x_{0};s)=0$ and the linear equation
\begin{equation}\label{eqn-linear-zero}
v_{t}=v_{xx}+q(t,x)v,
\end{equation}
where
\begin{equation*}
\begin{split}
q(t,x)=\left\{\begin{aligned}
\frac{f(t,u(t,x;s))-f(t,u^{o}(t;t_{0},a))}{u(t,x;s)-u^{o}(t;t_{0},a)},&\quad u(t,x;s)\neq u^{o}(t;t_{0},a),\\
0,&\quad u(t,x;s)=u^{o}(t;t_{0},a)
\end{aligned} \right.
\end{split}
\end{equation*}
is bounded. Applying Angenent's result (see e.g. \cite[Theorem B]{Ang88}) to \eqref{eqn-linear-zero}, there exist $\ep>0$ and $\de>0$ such that $v(t-\de,x;s)$ has at least two zeros in the interval $[x_{0}-\ep,x_{0}+\ep]$. However, due to the monotonicity of $u(t-\de,x;s)$ in $x$, $v(t-\de,x;s)$ has exactly one zero. This is a contradiction. Hence, $u_{x}(t,x;s)<0$.
\end{proof}

By Lemma \ref{lem-basic-prop}, for any $\la\in(0,1)$, $s<0$ and $t\geq s$, there is a unique $\xi_{\la}(t;s)\in\R$ such that
\begin{equation*}
u(t,\xi_{\la}(t;s);s)=\la.
\end{equation*}
The case $\la=\theta$ is of particular interest and it does play an important role in our later arguments. Notice $\xi_{\theta}(s;s)=x_{s}$ for all $s<0$. As usual, we refer to the point $(\xi_{\la}(t;s),\la)$ on the solution curve as the interface and $\xi_{\la}(t;s)$ as the interface location. The following lemma shows the continuous differentiability of $\xi_{\la}(t,s)$ in $t$.

\begin{lem}\label{lem-negativity}
Let $\la\in(0,1)$. For any $s<0$, the interface location $\xi_{\la}(t,s)$ is continuously differentiable in $t$ for $t>s$. Moreover, there holds
\begin{equation*}
\frac{d\xi_{\la}(t;s)}{dt}=-\frac{u_{t}(t,\xi_{\la}(t;s);s)}{u_{x}(t,\xi_{\la}(t;s);s)}.
\end{equation*}
\end{lem}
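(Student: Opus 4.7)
The plan is to apply the implicit function theorem to the defining relation $u(t,\xi_\lambda(t;s);s) = \lambda$, using the sign information on $u_x$ already established in Lemma \ref{lem-basic-prop}.

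First I would note that, by standard parabolic regularity applied to \eqref{main-eqn} with the H\"older-in-$t$, Lipschitz-in-$u$ regularity of $f$ assumed in (H1), the solution $u(t,x;s)$ is $C^{1}$ in $t$ and $C^{2}$ in $x$ on the open set $\{t>s\}\times\R$, and the partial derivatives $u_t,u_x,u_{xx}$ are jointly continuous there. In particular $u(t,x;s)$ is $C^1$ jointly in $(t,x)$ for $t>s$.

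Next, by Lemma \ref{lem-basic-prop}, $u(t,\cdot;s)$ is strictly decreasing with limits $1$ at $-\infty$ and $0$ at $+\infty$, so the value $\lambda\in(0,1)$ is attained at a unique point $\xi_\lambda(t;s)\in\R$. Moreover, Lemma \ref{lem-basic-prop} gives the crucial non-vanishing condition
\begin{equation*}
u_x(t,\xi_\lambda(t;s);s)<0\qquad\text{for all}\,\,t>s.
\end{equation*}
Defining $F:(s,\infty)\times\R\to\R$ by $F(t,\xi)=u(t,\xi;s)-\lambda$, we then have $F\in C^1$, $F(t,\xi_\lambda(t;s))=0$, and $\partial_\xi F(t,\xi_\lambda(t;s))=u_x(t,\xi_\lambda(t;s);s)\neq 0$. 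The implicit function theorem therefore yields, for every $t_0>s$, a unique $C^1$ function $\eta$ on a neighborhood of $t_0$ with $\eta(t_0)=\xi_\lambda(t_0;s)$ and $F(t,\eta(t))\equiv 0$; by uniqueness of the interface location we must have $\eta(t)=\xi_\lambda(t;s)$ locally, hence $\xi_\lambda(\cdot;s)\in C^1(s,\infty)$.

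Finally, differentiating the identity $u(t,\xi_\lambda(t;s);s)=\lambda$ in $t$ via the chain rule gives
\begin{equation*}
u_t(t,\xi_\lambda(t;s);s)+u_x(t,\xi_\lambda(t;s);s)\,\frac{d\xi_\lambda(t;s)}{dt}=0,
\end{equation*}
and solving for $\frac{d\xi_\lambda(t;s)}{dt}$ (permissible since $u_x\neq 0$ at the interface) yields the stated formula. There is no serious obstacle here: the only delicate point is invoking the right joint regularity of $u$ so that the implicit function theorem applies, and this is immediate from parabolic Schauder-type estimates under (H1).
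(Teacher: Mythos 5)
Your proof is correct, and it reaches the conclusion by a slightly different route than the paper. You invoke the implicit function theorem for $F(t,\xi)=u(t,\xi;s)-\lambda$, which requires knowing that $u_t$ and $u_x$ are \emph{jointly} continuous in $(t,x)$ for $t>s$ (interior Schauder-type regularity, which is indeed available under (H1) and is in any case what the paper already relies on in Theorem \ref{thm-entire-sol}); the non-degeneracy $u_x(t,\xi_\lambda(t;s);s)<0$ from Lemma \ref{lem-basic-prop}, together with the global uniqueness of the level point, then identifies the locally defined implicit function with $\xi_\lambda(\cdot;s)$ and gives the formula by the chain rule. The paper instead carries out the implicit-function argument by hand: it first deduces continuity of $t\mapsto\xi_\lambda(t;s)$ from the continuity and strict monotonicity of $u(t,\cdot;s)$, then manipulates the difference quotient of the identity $u(t,\xi_\lambda(t;s);s)=\lambda$ and passes to the limit, dividing by the spatial difference quotient, which is nonzero precisely because $u_x<0$ at the interface. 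The two arguments hinge on the same two ingredients (strict negativity of $u_x$ at the level set and regularity of $u$ for $t>s$); yours is shorter and cleaner at the price of quoting the IFT and the joint $C^1$ regularity as black boxes, while the paper's difference-quotient computation is more self-contained and makes explicit where continuity of $\xi_\lambda$ and of $u_t$, $u_x$ enter. No gap.
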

\begin{proof}
The continuity follows from the continuity of $u(t,x;s)$ and its monotonicity in $x$ by Lemma \ref{lem-basic-prop}. We show the continuous differentiability. Since $u(t,\xi_{\la}(t;s);s)=\la$ for $t\geq s$, we have $u(t+\ep,\xi_{\la}(t+\ep;s);s)-u(t,\xi_{\la}(t;s);s)=0$. Thus,
\begin{equation*}
\begin{split}
&\frac{u(t,\xi_{\la}(t+\ep;s);s)-u(t,\xi_{\la}(t;s);s)}{\xi_{\la}(t+\ep;s)-\xi_{\la}(t;s)}\times\frac{\xi_{\la}(t+\ep;s)-\xi_{\la}(t;s)}{\ep}\\
&\quad\quad=\frac{u(t,\xi_{\la}(t+\ep;s);s)-u(t+\ep,\xi_{\la}(t+\ep;s);s)}{\ep}
\end{split}
\end{equation*}
Since $\xi_{\la}(t;s)$ is continuous in $t$, we have
\begin{equation*}
\lim_{\ep\ra0}\frac{u(t,\xi_{\la}(t+\ep;s);s)-u(t,\xi_{\la}(t;s);s)}{\xi_{\la}(t+\ep;s)-\xi_{\la}(t;s)}=u_{x}(t,\xi_{\la}(t;s);s)<0
\end{equation*}
by Lemma \ref{lem-basic-prop}. In particular, $\frac{u(t,\xi_{\la}(t+\ep;s);s)-u(t,\xi_{\la}(t;s);s)}{\xi_{\la}(t+\ep;s)-\xi_{\la}(t;s)}\neq0$ for all small $\ep$. Thus,
\begin{equation*}
\begin{split}
&\frac{\xi_{\la}(t+\ep;s)-\xi_{\la}(t;s)}{\ep}\\
&\quad\quad=\frac{\xi_{\la}(t+\ep;s)-\xi_{\la}(t;s)}{u(t,\xi_{\la}(t+\ep;s);s)-u(t,\xi_{\la}(t;s);s)}\times\frac{u(t,\xi_{\la}(t+\ep;s);s)-u(t+\ep,\xi_{\la}(t+\ep;s);s)}{\ep}
\end{split}
\end{equation*}
Passing to the limit $\ep\ra0$ in the above equality, we conclude from the limit
\begin{equation*}
\lim_{\ep\ra0}\frac{u(t,\xi_{\la}(t+\ep;s);s)-u(t+\ep,\xi_{\la}(t+\ep;s);s)}{\ep}=-u_{t}(t,\xi_{\la}(t;s);s)
\end{equation*}
for $t>s$
that $\frac{d\xi_{\la}(t;s)}{dt}=\lim_{\ep\ra0}\frac{\xi_{\la}(t+\ep;s)-\xi_{\la}(t;s)}{\ep}$ exists and
\begin{equation*}
\frac{d\xi_{\la}(t;s)}{dt}=-\frac{u_{t}(t,\xi_{\la}(t;s);s)}{u_{x}(t,\xi_{\la}(t;s);s)}
\end{equation*}
for $t>s$,
which also implies the continuity of $\frac{d\xi_{\la}(t;s)}{dt}$ in $t$ for $t>s$. Hence, $\xi_{\la}(t;s)$ is continuously differentiable in
$t$ for $t>s$.
\end{proof}

We remark that due to the time-dependence of the nonlinear term $f(t,u)$, the time derivative $u_{t}(t,\xi_{\la}(t;s);s)$ does not have a fixed sign in general, and hence, $\frac{d\xi_{\la}(t;s)}{dt}$ does not have a fixed sign, which means $\xi_{\la}(t;s)$ oscillates and it is an unpleasant fact and does cause a lot of troubles (we point out that in the space heterogeneous case, the interface always propagates in one direction due to the time monotonicity, see \cite{MeRoSi10,NoRy09}). But, the estimate \eqref{estimate-lower-bound} forces $\xi_{\la}(t;s)$ to approach $+\infty$ as time $t$ elapses. However, the estimate \eqref{estimate-lower-bound} does not tell much information about how does $\xi_{\la}(t;s)$ approach $+\infty$. Later, in Lemma \ref{lem-rightward-prop-above-temp} and Lemma \ref{lem-propogation}, we characterize the rightward propagation of $\xi_{\la}(t;s)$, which plays the crucial role in deriving the boundedness of interface width and the exponential decay of the transition front ahead of the interface. We also note that $u_{t}(t,\xi_{\la}(t;s);s)$ is uniformly bounded in  $t\ge s+\delta_0$ for any $\delta_0>0$, but temporarily we are not sure if $u_{x}(t,\xi_{\la}(t;s);s)$ is uniformly away from $0$. But it is the case, see Theorem \ref{thm-non-flat}. Hence, $\frac{d\xi_{\la}(t;s)}{dt}$ is uniformly bounded in  $t\ge s+\delta_0$ for any $\delta_0>0$, that is, the interfaces cannot propagate faster than certain speed.


\section{Bounded Interface Width}\label{sec-bd-interface-width}

In this section, we show that the distance between their interface locations of  any two interfaces remains bounded as time elapses. Throughout this section, we consider \eqref{main-eqn} and assume $\rm(H1)$ and $\rm(H2)$. The main result of this section is given by

\begin{thm}\label{thm-bd-interface-width}
For any $\la_{1},\la_{2}\in(0,1)$, there exists $C=C(\la_{1},\la_{2})>0$ such that
\begin{equation*}
|\xi_{\la_{1}}(t;s)-\xi_{\la_{2}}(t;s)|\leq C(\la_{1},\la_{2})
\end{equation*}
for all $s<0$, $t\geq s$.
\end{thm}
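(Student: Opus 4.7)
My plan is, by the triangle inequality, to reduce to bounding the two quantities $\xi_\lambda(t;s)-\xi_\theta(t;s)$ for $\lambda<\theta$ (the tail ahead of the ignition interface) and $\xi_\theta(t;s)-\xi_\lambda(t;s)$ for $\lambda>\theta$ (the plateau behind it). Monotonicity of $u(t,\cdot;s)$ from Lemma \ref{lem-basic-prop} makes both quantities nonnegative and the reduction automatic.

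For the ahead tail I would exploit that the reaction vanishes on $\{u\le\theta\}$, so that $u$ is a heat solution ahead of $\xi_\theta$. A moving-frame exponential pinned at $\xi_\theta$ is unavailable because $\xi_\theta'$ oscillates in sign, so I would instead construct a static supersolution from the explicit exponential tail $\phi(x)=\theta e^{-c_{\inf}x}$ on $x\ge 0$ of \eqref{explicit-sol}. The first ingredient is the candidate $W_0(t,x)=\theta e^{-c_{\inf}(x-x_s-c_{\inf}(t-s))}$, which is itself a heat solution; in the region $\{W_0\le\theta\}=\{x\ge x_s+c_{\inf}(t-s)\}$ the forcing drops out and $W_0$ is a supersolution of \eqref{main-eqn} there. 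To patch the left half and the intermediate strip $\theta<W_0<1$ where $W_0$ fails to be a supersolution, I would splice in a shifted traveling wave $\phi_{\sup}(x-x_s-c_{\sup}(t-s)+\Delta)$ of the autonomous equation $u_t=u_{xx}+f_{\sup}(u)$, which is always a supersolution of \eqref{main-eqn} by $f\le f_{\sup}$; the shift $\Delta$ is tuned so that the composite $W$ dominates the initial data $\phi(\cdot-x_s)$. Comparison then yields $u(t,x;s)\le \theta e^{-c_{\inf}(x-x_s-c_{\inf}(t-s))}$ on $\{x\ge x_s+c_{\inf}(t-s)\}$, and evaluating at $x=\xi_\lambda(t;s)$ and $x=\xi_\theta(t;s)$ and taking the ratio cancels the unknown shift $\xi_\theta-x_s-c_{\inf}(t-s)$ to produce $\xi_\lambda-\xi_\theta\le c_{\inf}^{-1}\log(\theta/\lambda)$.

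For the behind plateau I would argue by contradiction and compactness. Suppose $\xi_\theta(t_n;s_n)-\xi_\lambda(t_n;s_n)\to\infty$ and translate to $u_n(\tau,y):=u(\tau+t_n,y+\xi_\lambda(t_n;s_n);s_n)$, so that $u_n(0,0)=\lambda$ and $u_n(0,\cdot)\ge\theta$ on arbitrarily long intervals $[0,W_n]$. Parabolic Schauder estimates and a diagonal subsequence produce a $C^{1,2}_{\rm loc}$ limit $u_\infty$ solving a reaction-diffusion equation with $\tilde f\in[f_{\inf},f_{\sup}]$, monotone nonincreasing in $y$, with $u_\infty(0,0)=\lambda$, $u_\infty(0,y)\ge\lambda$ for $y\le 0$, and $u_\infty(0,y)\ge\theta$ for all $y\ge 0$. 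Placing a smoothed, compactly supported subsolution $w_0\le u_\infty(0,\cdot)$ with $w_0\equiv\lambda$ on $[-R,0]$ for $R$ large and evolving under the autonomous subequation $w_t=w_{xx}+f_{\inf}(w)$, the classical spreading result for ignition nonlinearities (\cite{ArWe75,FiMc77}) yields $w(T,y)\to 1$ as $T\to\infty$ on every compact set, whence $u_\infty(T,y)\to 1$ pointwise. The contradiction then comes from juxtaposing this with the persistence of a $\lambda$-interface of each $u_n$, whose location in the shifted frame is controlled by the finite-speed upper bound in the proof of Lemma \ref{lem-entire-sol}, so the limit inherits a $\lambda$-interface at a finite point, incompatible with $u_\infty(T,\cdot)\to 1$ uniformly on compacts.

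The hardest step is making this contradiction rigorous when $t_n-s_n\to\infty$: the $\lambda$-interface of $u_n$ at later times may drift relative to the fixed shift, and the lower bound $u\ge\phi(x-x_s-c_{\inf}(t-s))$ of Lemma \ref{lem-entire-sol} does not automatically transfer into the shifted frame when the auxiliary displacement $\xi_\lambda(t_n;s_n)-x_{s_n}-c_{\inf}(t_n-s_n)$ is unbounded. I expect to overcome this either by monitoring $\xi_\lambda$ along the sequence via the finite-speed upper bound to control its drift, or by running the argument in a moving frame co-oriented with $\xi_\lambda$ so that the spreading conclusion is extracted on an interval whose size is controlled, in effect closing the loop with the ahead-tail estimate of the previous paragraph.
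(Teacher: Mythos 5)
Your first step --- the ahead-of-the-interface bound --- rests on a comparison that is false. The envelope $\theta e^{-c_{\inf}(x-x_s-c_{\inf}(t-s))}$ is anchored to the slow speed $c_{\inf}$, but under (H1)--(H2) the nonlinearity only satisfies $f\ge f_{\inf}$, and when $f$ is close to $f_{\sup}$ the $\theta$-level set of $u(\cdot,\cdot;s)$ advances at a speed close to that of the $f_{\sup}$-wave, which exceeds $c_{\inf}$. Concretely, on the lateral boundary $x=x_s+c_{\inf}(t-s)$ of your comparison domain one has $u\ge\phi(0)=\theta$ by \eqref{estimate-lower-bound}, so the boundary inequality needed for the parabolic comparison fails; and at points $x=x_s+c_{\inf}(t-s)+O(1)$, which for large $t-s$ lie far behind the true front, $u$ is close to $1$ while your spliced supersolution equals the exponential piece there (the term $\phi_{\sup}(x-x_s-c_{\sup}(t-s)+\Delta)$ is near $1$ at such points), i.e.\ your $W\le\theta$, so $u\le W$ cannot persist no matter how $\Delta$ is tuned. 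This is exactly why the paper anchors its exponential envelope $\xi(t;s)$ in \eqref{new-interface} to the fast speed $c_{\ka_0}$ of Lemma \ref{lem-new-interface-bd}, and why the real work (Lemma \ref{lem-bd-width}, following Zlato\v{s}) is to relate that envelope back to $\xi_\la$ by playing the KPP speed $c^*_\ka<c_B$ against the bistable pushing of Lemma \ref{lem-rightward-prop-above-temp}; the decay rate obtainable there is $\la_\ka=\sqrt{\ka}$, not $c_{\inf}$, and your ratio-cancellation shortcut has no valid substitute.

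Your second step --- the behind-the-interface bound by compactness --- does not produce a contradiction even if the gap you acknowledge were filled. The spreading result you invoke is for $w_t=w_{xx}+f_{\inf}(w)$ and propagates at speed $c_{\inf}$, whereas the $\la$-interface of $u_n$ may drift in the shifted frame at speeds up to roughly the $f_{\sup}$-speed (the only a priori bound, from the supersolution in Lemma \ref{lem-entire-sol}, is a speed $c>c_{\inf}$, plus an $n$-dependent term $(c-c_{\inf})(t_n-s_n)$ that blows up). Hence the limit $u_\infty(T,\cdot)\to1$ locally uniformly is perfectly consistent with the $\la$-level set receding to $+\infty$ linearly in $T$ at a rate faster than $c_{\inf}$; in fact $u_\infty(T,0)\to1$ is exactly what the true solution does at the fixed point $\xi_\la(t_n;s_n)$ as the front passes, so nothing is contradicted. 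The paper's mechanism for the behind part is different in kind: the sub-solution $\om_-$ of Lemma \ref{lem-sub-super-sol}, which blends the ODE solution $u^o(t;t_0,\theta+\de)$ (rising to $1$ in a fixed time) with the profile $\phi$, forces the solution up to $1-\de_*$ after a fixed delay $T_-$ at a location displaced by only a bounded amount, while Proposition \ref{prop-bd-propagation} bounds how far $\xi_\theta$ can move during that fixed time; and this route still requires the levels-below-$\la_*$ case (Proposition \ref{prop-bd-interface-width}) as input, so the behind estimate cannot be decoupled from the Zlato\v{s}-type lemma in the way your plan suggests.
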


To prove the above theorem, we first prove some lemmas and propositions. First of all, we  characterize the rightward propagation of interfaces above the ignition temperature. Let $f_{B}$ be a continuously differentiable function satisfying
\begin{equation}\label{bistable-nonlinearity}
\begin{split}
&f_{B}(0)=0,\,\,f_{B}(u)<0\,\,\text{for}\,\,u\in(0,\theta),\\
&f_{B}(u)=f_{\inf}(u)\,\,\text{for}\,\,u\in[\theta,1]\,\,\text{and}\,\,\int_{0}^{1}f_{B}(u)du>0.
\end{split}
\end{equation}
Since $f_{\inf}(u)>0$ for $u\in(\theta,1)$, such an $f_{B}$ exists. Clearly, $f_{B}$ is of standard bistable type and $f_{B}(u)\leq f(t,u)$ for all $u\in[0,1]$ and $t\in\R$. Hence, there exist (see e.g.\cite{ArWe75,ArWe78,FiMc77}) a unique constant $c_{B}>0$ and a wave profile $\phi_{B}$ satisfying $(\phi_{B})_{x}<0$, $\phi_{B}(-\infty)=1$ and $\phi_{B}(\infty)=0$ such that $\phi_{B}(x-c_{B}t)$ and its translations are traveling wave solutions of
\begin{equation}\label{eqn-bi-1}
u_{t}=u_{xx}+f_{B}(u).
\end{equation}

\begin{lem}\label{lem-rightward-prop-above-temp}
Let $\la\in(\theta,1)$. For any $\ep>0$, there is $t_{\ep,\la}>0$ such that
\begin{equation*}
\xi_{\la}(t;s)-\xi_{\la}(t_{0};s)\geq (c_{B}-\ep)(t-t_{0}-t_{\ep,\la})
\end{equation*}
for $s<0$, $t\geq t_{0}\geq s$.
\end{lem}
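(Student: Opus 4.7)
The plan is to trap $u(\cdot,\cdot;s)$ from below by a solution of the bistable equation~\eqref{eqn-bi-1} and invoke the classical convergence of bistable Cauchy solutions to the traveling wave $\phi_B(x-c_B t)$. The single comparison lever is that $f_B\le f(t,\cdot)$ on $[0,1]$, so any sub-solution of~\eqref{eqn-bi-1} is a sub-solution of~\eqref{main-eqn}.

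First I would manufacture a bistable minorant at time $t_0$. By Lemma~\ref{lem-basic-prop}, $u(t_0,\cdot;s)$ is strictly decreasing with value $\la$ at $\xi_\la(t_0;s)$, so the step profile
$$v_0(x)=\la\cdot\mathbf{1}_{(-\infty,\xi_\la(t_0;s)]}(x)$$
(or a smooth mollification bounded above by it) satisfies $v_0\le u(t_0,\cdot;s)$. Let $v(t,x)$ solve the bistable Cauchy problem $v_t=v_{xx}+f_B(v)$ for $t\ge t_0$ with $v(t_0,\cdot)=v_0$; parabolic comparison yields $v\le u(\cdot,\cdot;s)$ on $[t_0,\infty)\times\R$. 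Translation invariance of~\eqref{eqn-bi-1} gives $v(t,x)=V(t-t_0,\,x-\xi_\la(t_0;s))$, where $V(\tau,y)$ is the bistable solution launched from $\la\cdot\mathbf{1}_{(-\infty,0]}(y)$, a profile depending only on $\la$.

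Second I would apply the classical Fife--McLeod stability theorem for bistable fronts \cite{FiMc77}: since $V(0,\cdot)$ is non-increasing, exceeds $\theta$ on an infinite left half-line, and vanishes at $+\infty$, the solution $V(\tau,y)$ converges uniformly in $y$ to a translate $\phi_B(y-c_B\tau-x^*)$ for some $x^*=x^*(\la)$. In particular, for every $\ep>0$ there is $\tau_\ep=\tau_\ep(\la,\ep)>0$ with
$$V(\tau,y)\ge\phi_B(y-c_B\tau-x^*)-\ep\quad\text{for all }\tau\ge\tau_\ep,\ y\in\R.$$
Evaluating $\la=u(t,\xi_\la(t;s);s)\ge v(t,\xi_\la(t;s))$ for $t-t_0\ge\tau_\ep$ and inverting the decreasing function $\phi_B$ yields the absolute linear bound
$$\xi_\la(t;s)-\xi_\la(t_0;s)\ge c_B(t-t_0)-C_\ep,\qquad C_\ep=C_\ep(\la,\ep).$$
For the transient $t\in[t_0,t_0+\tau_\ep]$, the $x$-monotonicity of $v(t,\cdot)$ (inherited from $v_0$ via parabolic comparison with translates) together with $v\le u$ forces $\xi_\la(t;s)$ to lie to the right of the $\la$-level set of $v(t,\cdot)$; by translation invariance this level set is within distance $C'_\ep=C'_\ep(\la,\ep)$ of $\xi_\la(t_0;s)$. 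Choosing $t_{\ep,\la}$ large enough that $(c_B-\ep)t_{\ep,\la}$ exceeds both $C_\ep+\ep\tau_\ep$ and $C'_\ep$ rewrites everything in the stated form $(c_B-\ep)(t-t_0-t_{\ep,\la})$.

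The main obstacle is the quantitative convergence used in the second step: although classical, it must be invoked in a form strong enough to be combined with the transient estimate into a single inequality valid for all $t\ge t_0$. The key logistical point, which is what makes a uniform choice of $t_{\ep,\la}$ possible, is that translation invariance of~\eqref{eqn-bi-1} forces the constants $x^*,\tau_\ep,C_\ep,C'_\ep$ to depend only on $(\la,\ep)$ and not on $s$ or $t_0$, so the lemma really does hold with a single $t_{\ep,\la}$.
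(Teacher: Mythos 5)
Your main step is essentially the paper's argument: slide a bistable minorant under $u(t_0,\cdot;s)$ (the paper uses the Lipschitz ramp $\psi_*(x)=\min\{\la,\max\{-C^*x+\la,0\}\}$ rather than the step, but that is immaterial), compare via $f_B\le f$, and invoke the Fife--McLeod exponential stability of $\phi_B(x-c_Bt)$ to get $\xi_\la(t;s)-\xi_\la(t_0;s)\ge c_B(t-t_0)-C_\ep$ for $t-t_0$ beyond a fixed time $T_0(\la)$, with all constants independent of $s,t_0$ by time/space homogeneity of \eqref{eqn-bi-1}. Up to minor bookkeeping (e.g.\ inverting $\phi_B$ at level $\la+\ep$ requires $\la+\ep<1$; the paper avoids this by working at the fixed level $\tfrac{1+\la}{2}$ with tolerance $\tfrac{1-\la}{2}$), this part is sound.

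The gap is in your transient estimate on $[t_0,t_0+\tau_\ep]$. You need a constant $C'_\ep$ with $\xi_\la(t;s)\ge\xi_\la(t_0;s)-C'_\ep$ there, and you claim the $\la$-level point of $v(t,\cdot)$ stays within bounded distance of $\xi_\la(t_0;s)$ ``by translation invariance.'' Translation invariance only removes the dependence on $s$ and $t_0$; the actual content is that the $\la$-level point $y_\la(\tau)$ of the bistable flow started from the step $\la\,\mathbf{1}_{(-\infty,0]}$ stays bounded as $\tau\ra0^{+}$, and this is not automatic: for $\tau>0$ the solution drops strictly below $\la$ in an $O(\sqrt{\tau})$-neighborhood of the jump, and it only climbs back above $\la$ to the left because the reaction gain ($f_B>0$ above $\theta$) beats the exponentially small heat leakage; making that precise requires a Duhamel/heat-kernel estimate (or some substitute comparison), not a symmetry remark. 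Note also that the natural substitute --- a stationary ``bump'' with peak value $\la$ --- is \emph{not} a subsolution of \eqref{eqn-bi-1}, since $f_B<0$ on $(0,\theta)$ while the bump's tails are linear; this is exactly why the paper runs a second, separate comparison for the transient: it takes $\vp=\max\{\hat\vp,0\}$ with $-\hat\vp_{xx}=f_{\inf}(\hat\vp)$, $\hat\vp(0)=\la$, $\hat\vp_x(0)=0$, which \emph{is} a stationary subsolution of $u_t=u_{xx}+f_{\inf}(u)$, slides it under the initial profile, and concludes $u(t,z_*+\xi_\la(t_0;s);s)\ge\la$ and hence $\xi_\la(t;s)\ge\xi_\la(t_0;s)+z_*$ for all $t\ge t_0$. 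Either supply a quantitative small-time estimate for your $v$, or add this second comparison against the $f_{\inf}$-equation; as written, the transient step is asserted rather than proved.
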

\begin{proof}
Let us fix a $\la\in(\theta,1)$. We first define
\begin{equation*}
\begin{split}
\psi_{*}(x)=\left\{\begin{aligned}
\la,&\quad x\leq0,\\
\max\{-C^{*}x+\la,0\},&\quad x\geq0,
\end{aligned} \right.
\end{split}
\end{equation*}
where $C^{*}>0$ is such that $\inf_{s<0,t\geq s}\inf_{x\in\R}u_{x}(t,x;s)\geq-C^{*}$. Such an $C^{*}$ exists by a priori estimates
for parabolic equations. Clearly, for any $s<0$ and $t_{0}\geq s$, we have
\begin{equation}\label{special-function-1}
\psi_{*}(x)\leq u(t_{0},x+\xi_{\la}(t_{0};s);s),\quad x\in\R.
\end{equation}

Next, for $t_{0}\geq s$, let $u_{B}(t,x;t_{0})$, $t\geq t_{0}$ be the solution of \eqref{eqn-bi-1} with initial data $u_{B}(t_{0},x;t_{0})=\psi_{*}(x)(\leq u(t_{0},x+\xi_{\la}(t_{0};s);s)$ by \eqref{special-function-1}). Thus, time homogeneity and comparison principle ensure
\begin{equation*}
u_{B}(t-t_{0},x;0)=u_{B}(t,x;t_{0})\leq u(t,x+\xi_{\la}(t_{0};s);s),\quad x\in\R,\,\,t\geq t_{0}.
\end{equation*}
By the stability of traveling wave solutions of \eqref{eqn-bi-1} (see \cite[Theorem 3.1]{FiMc77}) and the conditions satisfied by $\psi_{*}$, there exist $z_{0}=z_{0}(\la)\in\R$, $K=K(\la)>0$ and $\om=\om(\la)>0$ such that
\begin{equation*}
\sup_{x\in\R}|u_{B}(t-t_{0},x;0)-\phi_{B}(x-c_{B}(t-t_{0})-z_{0})|\leq Ke^{-\om(t-t_{0})}.\quad t\geq t_{0}.
\end{equation*}
In particular, for $t\geq t_{0}$ and $x\in\R$
\begin{equation}\label{estimate-l-bd}
u(t,x+\xi_{\la}(t_{0};s);s)\geq u_{B}(t-t_{0},x;0)\geq\phi_{B}(x-c_{B}(t-t_{0})-z_{0})-Ke^{-\om(t-t_{0})}.
\end{equation}
Let $T_{0}=T_{0}(\la)>0$ be such that $Ke^{-\om T_{0}}=\frac{1-\la}{2}$ and denote by $\xi_{B}(\frac{1+\la}{2})$ the unique point such that $\phi_{B}(\xi_{B}(\frac{1+\la}{2}))=\frac{1+\la}{2}$. Setting $x=c_{B}(t-t_{0})+z_{0}+\xi_{B}(\frac{1+\la}{2})$ in \eqref{estimate-l-bd}, we find for any $t\geq t_{0}+T_{0}$
\begin{equation*}
u(t,c_{B}(t-t_{0})+z_{0}+\xi_{B}(\frac{1+\la}{2})+\xi_{\la}(t_{0};s);s)\geq\phi_{B}(\xi_{B}(\frac{1+\la}{2}))-Ke^{-\om T_{0}}=\la.
\end{equation*}
Monotonicity then yields
\begin{equation}\label{propagation-result-1}
\xi_{\la}(t;s)-\xi_{\la}(t_{0};s)\geq c_{B}(t-t_{0})+z_{0}+\xi_{B}(\frac{1+\la}{2}), \quad t\geq t_{0}+T_{0}.
\end{equation}

Finally, we consider $\xi_{\la}(t;s)$ for $t\in[t_{0},t_{0}+T_{0}]$. To do so, let $\vp(x)=\max\{\hat{\vp}(x),0\}$ for $x\in\R$, where $\hat{\vp}$ is the unique solution of the following problem
\begin{equation*}
-\hat{\vp}_{xx}=f_{\inf}(\hat{\vp}),\quad \hat{\vp}(0)=\la,\quad \hat{\vp}_{x}(0)=0.
\end{equation*}
The function $\hat{\vp}$ satisfies the following properties:
\begin{itemize}
\item it is even and strictly decreasing for $x\geq0$;
\item it is strictly concave down for $x\in(-z_{1},z_{1})$, where $z_{1}>0$ is such that $\hat{\vp}(z_{1})=\theta$;
\item it is linear for $x\geq z_{1}$ with a negative slope.
\end{itemize}
Then, we can easily find a shift $z_{*}<0$ such that $\vp(x-z_{*})\leq\psi_{*}(x)$ for $x\in\R$. Denote by $u_{I}(t,x;t_{0})$ the solution of $u_{t}=u_{xx}+f_{\inf}(u)$ with $u_{I}(t_{0},x;t_{0})=\vp(x-z_{*})$. Since $-\vp_{xx}\leq f_{\inf}(\vp)$, we obtain from the maximum principle that $u_{I}(t,x;t_{0})\geq u_{I}(t_{0},x;t_{0})=\vp(x-z_{*})$ for all $t>t_{0}$. In particular, $u_{I}(t,z_{*};t_{0})\geq\la$ for all $t\geq t_{0}$.

Since $\vp(x-z_{*})\leq\psi_{*}(x)\leq u(t_{0},x+\xi_{\la}(t_{0};s);s)$, comparison principle implies that $u_{I}(t,x;t_{0})\leq u(t,x+\xi_{\la}(t_{0};s);s)$ for $t\geq t_{0}$. Setting $x=z_{*}$, we in particular have $u(t,z_{*}+\xi_{\la}(t_{0};s);s)\geq u_{I}(t,z_{*};t_{0})\geq\la$ for $t\geq t_{0}$. Monotonicity then yields
\begin{equation}\label{propagation-result-2}
\xi_{\la}(t;s)\geq z_{*}+\xi_{\la}(t_{0};s),\quad t\geq t_{0}.
\end{equation}
The result then follows from \eqref{propagation-result-1} and \eqref{propagation-result-2}.
\end{proof}

As seen in the proof of Lemma \ref{lem-rightward-prop-above-temp}, the bistable traveling waves $\phi_{B}(x-c_{B}t)$ push the approximation solutions $u(t,x;s)$ move rightward in some average sense. This property can also be derived if we use ignition traveling waves of $u_{t}=u_{xx}+f_{\inf}(u)$. The reason for using bistable traveling waves is that bistable traveling waves attract a larger class of initial data than ignition traveling waves do, and therefore, it is more flexible and convenient to use bistable traveling waves.

Next, for $\ka>0$, set $c^*_{\ka}=2\sqrt{\ka}$ and $\la_{\ka}=\sqrt{\ka}$. Clearly, $\la_{\ka}^{2}-c^*_{\ka}\la_{\ka}+\ka=0$, and hence, $e^{-\la_{\ka}x}$ is a solution of $\psi''+c^*_{\ka}\psi'+\ka\psi=0$. It is well-known that $c^*_{\ka}=\min_{\la>0}\frac{\ka+\la^{2}}{\la}=\frac{\ka+\la_{\ka}^{2}}{\la_{\ka}}$ is the minimal speed of a KPP traveling wave (see e.g. \cite{KPP37}).

For $\ka>0$, $s<0$ and $t\geq s$, define
\begin{equation}\label{new-interface}
\xi(t;s)=\inf\Big\{y\in\R\Big|u(t,x;s)\leq e^{-\la_{\ka}(x-y)},\quad x\in\R\Big\}.
\end{equation}
Due to the second estimate in \eqref{aprior-estimate-two-side1}, $\xi(t;s)$ is well-defined if $\la_{\ka}\leq c_{\inf}$, that is, $\ka\in(0,c_{\inf}^{2}]$. Here, we use the $\ka$-independent notation for $\xi(t;s)$, but this should not cause any trouble, since later in Lemma \ref{lem-bd-width}, we only need one small $\ka$. The following result controls the rightward propagation of $\xi(t;s)$.

\begin{lem}\label{lem-new-interface-bd}
Let $\ka\in(0,c_{\inf}^{2}]$. Set $\ka_{0}=\sup_{u\in(0,1)}\frac{f_{\sup}(u)}{u}$ and $c_{\ka_{0}}=\frac{\ka_{0}}{\la_{\ka}}+\la_{\ka}$. Then,
\begin{equation*}
\xi(t;s)-\xi(t_{0};s)\leq c_{\ka_{0}}(t-t_{0})
\end{equation*}
for all $s<0$, $t\geq t_{0}\geq s$.
\end{lem}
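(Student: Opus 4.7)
The plan is to build an explicit super-solution of the form of an exponential shifted to the right at speed $c_{\ka_{0}}$ and invoke the parabolic maximum principle. First I would note that the infimum in the definition \eqref{new-interface} of $\xi(t_{0};s)$ is attained: since $y\mapsto e^{-\la_{\ka}(x-y)}$ is continuous and increasing in $y$ for each fixed $x$, if $u(t_{0},x;s)\le e^{-\la_{\ka}(x-y)}$ holds for all $y>\xi(t_{0};s)$ and $x\in\R$, then letting $y\da\xi(t_{0};s)$ gives $u(t_{0},x;s)\le e^{-\la_{\ka}(x-\xi(t_{0};s))}$ for all $x\in\R$. This will serve as the initial ordering at $t=t_{0}$.

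Next I would introduce the candidate super-solution
\begin{equation*}
v(t,x)=e^{-\la_{\ka}\bigl(x-\xi(t_{0};s)-c_{\ka_{0}}(t-t_{0})\bigr)},\qquad t\ge t_{0},\ x\in\R,
\end{equation*}
and compute
\begin{equation*}
v_{t}-v_{xx}-f(t,v)=(\la_{\ka}c_{\ka_{0}}-\la_{\ka}^{2})v-f(t,v)=\ka_{0}v-f(t,v),
\end{equation*}
where the final equality uses the identity $\la_{\ka}c_{\ka_{0}}-\la_{\ka}^{2}=\ka_{0}$ built into the definition of $c_{\ka_{0}}$. To conclude that $v$ is a super-solution I would verify $f(t,v)\le\ka_{0}v$ by a case analysis in $v$: for $v\in(0,\theta]$ the assumption (H1) gives $f(t,v)=0\le\ka_{0}v$; for $v\in[\theta,1]$ the bound (H2) yields $f(t,v)\le f_{\sup}(v)\le\ka_{0}v$ by the very definition of $\ka_{0}=\sup_{u\in(0,1)}f_{\sup}(u)/u$; and for $v>1$ assumption (H1) gives $f(t,v)<0\le\ka_{0}v$. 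Thus $v_{t}\ge v_{xx}+f(t,v)$ on $[t_{0},\infty)\times\R$.

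With the super-solution in hand and the initial inequality $u(t_{0},x;s)\le v(t_{0},x)$ from the first step, the comparison principle for \eqref{main-eqn} gives
\begin{equation*}
u(t,x;s)\le e^{-\la_{\ka}\bigl(x-\xi(t_{0};s)-c_{\ka_{0}}(t-t_{0})\bigr)},\qquad t\ge t_{0},\ x\in\R.
\end{equation*}
By the definition \eqref{new-interface} of $\xi(t;s)$, this forces $\xi(t;s)\le\xi(t_{0};s)+c_{\ka_{0}}(t-t_{0})$, which is exactly the claimed bound. The only mildly delicate point is the verification of the differential inequality across the full range of $v$; everything else is routine, and the comparison can be applied on $[t_{0},\infty)\times\R$ since both $u(\cdot,\cdot;s)$ and $v$ are bounded and well-behaved at infinity (indeed $u(t,x;s)\to 0$ as $x\to\infty$ by Lemma \ref{lem-basic-prop} while $v(t,x)\to\infty$, and $u\le 1\le v$ far to the left after some time).
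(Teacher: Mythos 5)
Your proposal is correct and follows essentially the same route as the paper: the same exponential super-solution $e^{-\la_{\ka}(x-\xi(t_{0};s)-c_{\ka_{0}}(t-t_{0}))}$, the identity $\la_{\ka}^{2}-c_{\ka_{0}}\la_{\ka}+\ka_{0}=0$, the bound $f(t,v)\le\ka_{0}v$, and the comparison principle. Your extra remarks (attainment of the infimum in \eqref{new-interface} and the case analysis for $f(t,v)\le\ka_{0}v$) merely spell out details the paper leaves implicit.
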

\begin{proof}
For $s<0$, $t\geq t_{0}\geq s$, define
\begin{equation*}
v(t,x;t_{0})=e^{-\la_{\ka}(x-\xi(t_{0};s)-c_{\ka_{0}}(t-t_{0}))}.
\end{equation*}
Since $c_{\ka_{0}}=\frac{\ka_{0}}{\la_{\ka}}+\la_{\ka}$, i.e., $\la_{\ka}^{2}-c_{\ka_{0}}\la_{\ka}+\ka_{0}=0$, we readily check that $v_{t}=v_{xx}+\ka_{0}v$. By the definition of $\ka_{0}$, we have $\ka_{0}v\geq f_{\sup}(v)$ for all $v\geq0$. It then follows from $v(t_{0},x;t_{0})=e^{-\la_{\ka}(x-\xi(t_{0};s))}\geq u(t_{0},x;s)$ by \eqref{new-interface} and the comparison principle that $v(t,x;t_{0})\geq u(t,x;s)$ for $t\geq t_{0}$, which leads to the result.
\end{proof}

Note the definition of $\xi(t;s)$ in \eqref{new-interface} and Lemma \ref{lem-new-interface-bd} does not guarantee any continuity of $\xi(t;s)$ in $t$. But, if we know $\xi(t;s)$ is increasing from $\xi(t_{0};s)$ for $t>t_{0}$, then it is controlled continuously by Lemma \ref{lem-new-interface-bd}. This observation is important in the next technical lemma, which is crucial in proving Theorem \ref{thm-bd-interface-width}.

\begin{lem}\label{lem-bd-width}
There exists $\la_{*}\in(\theta,1)$ such that for any $\la\in(\theta,\la_{*}]$, there is $C=C(\la)>0$ such that
\begin{equation*}
|\xi_{\la}(t;s)-\xi(t;s)|\leq C
\end{equation*}
for all $s<0$, $t\geq s$.
\end{lem}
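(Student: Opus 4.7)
The plan is to prove the two one-sided bounds $\xi_{\la}(t;s)-\xi(t;s)\leq C$ and $\xi(t;s)-\xi_{\la}(t;s)\leq C$ separately. The first direction is essentially free: by continuity, the envelope inequality $u(t,x;s)\leq e^{-\la_{\ka}(x-\xi(t;s))}$ defining $\xi(t;s)$ holds for every $x\in\R$, and evaluating it at $x=\xi_{\la}(t;s)$ gives $\la\leq e^{-\la_{\ka}(\xi_{\la}(t;s)-\xi(t;s))}$, so $\xi_{\la}(t;s)-\xi(t;s)\leq \la_{\ka}^{-1}\log(1/\la)$ for every $\la\in(0,1)$, uniformly in $s<0$ and $t\geq s$.

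For the reverse direction I would rely on the representation
\[
\xi(t;s)\;=\;\la_{\ka}^{-1}\log\sup_{x\in\R}v(t,x;s),\qquad v(t,x;s):=u(t,x;s)\,e^{\la_{\ka}x}.
\]
By Lemma \ref{lem-basic-prop}, $v(t,x;s)\to 0$ as $x\to-\infty$ since $u\leq 1$ and $e^{\la_{\ka}x}\to 0$, while the right-tail estimate \eqref{aprior-estimate-two-side1} combined with $\la_{\ka}=\sqrt{\ka}\leq c_{\inf}$ gives $v(t,x;s)\to 0$ as $x\to+\infty$. Thus $v(t,\cdot;s)$ attains its maximum at an interior point $x^*(t;s)$ satisfying $u_{x}(t,x^*;s)=-\la_{\ka}u(t,x^*;s)$, and setting $u^*(t;s):=u(t,x^*(t;s);s)$,
\[
\xi(t;s)\;=\;x^*(t;s)\;+\;\la_{\ka}^{-1}\log u^*(t;s).
\]
If I can produce some $\la_*>\theta$ with $u^*(t;s)\geq\la_*$ uniformly in $s<0$ and $t\geq s$, then for any $\la\in(\theta,\la_*]$ monotonicity of $u$ in $x$ gives $\xi_{\la}(t;s)\geq x^*(t;s)$, and consequently $\xi(t;s)-\xi_{\la}(t;s)\leq\la_{\ka}^{-1}\log u^*(t;s)\leq 0$, which combined with the first step completes the proof.

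The main obstacle I expect is establishing the uniform strict lower bound $\inf_{t,s}u^*(t;s)>\theta$. My strategy is to choose $\ka$ small enough relative to the bistable speed $c_{B}$ appearing in Lemma \ref{lem-rightward-prop-above-temp}, so that $\la_{\ka}=\sqrt{\ka}<c_{B}$. In the tail $x\geq\xi_{\theta}(t;s)$ the nonlinearity vanishes by (H1), so $u$ obeys the heat equation there; a moving-frame comparison with exponential profiles $\theta e^{-c(x-\xi_{\theta}(t;s))}$ whose rate $c$ is tied to the propagation speed of $\xi_{\theta}$, together with the rightward-propagation estimate of Lemma \ref{lem-rightward-prop-above-temp}, should force $|u_{x}|/u\geq c_{B}-\ep>\la_{\ka}$ throughout the tail. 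This yields $v_{x}=(u_{x}+\la_{\ka}u)e^{\la_{\ka}x}<0$ there, so $x^*(t;s)$ lies strictly to the left of $\xi_{\theta}(t;s)$ and hence $u^*(t;s)>\theta$. Upgrading this strict inequality to a uniform margin $u^*\geq\la_*>\theta$---which is the technically delicate step---should come from parabolic Harnack estimates in the transition zone $\{u\in[\theta,\la_*]\}$ combined with the uniform-in-time propagation bounds already established in this section.
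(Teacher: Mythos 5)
Your first half is fine: the bound $\xi_{\la}(t;s)-\xi(t;s)\leq\la_{\ka}^{-1}\log(1/\la)$ from evaluating the envelope at $x=\xi_{\la}(t;s)$ is exactly the direction the paper dismisses as trivial, and your representation $\xi(t;s)=\la_{\ka}^{-1}\log\sup_{x}u(t,x;s)e^{\la_{\ka}x}$ is a correct reformulation of \eqref{new-interface} (with the maximum attained since $\la_{\ka}<c_{\inf}$ by \eqref{aprior-estimate-two-side1}). The genuine gap is in the only hard direction: your entire argument reduces to the uniform bound $u^{*}(t;s)\geq\la_{*}>\theta$, i.e.\ to a pointwise-in-time lower bound $|u_{x}|/u>\la_{\ka}$ (you even claim $\geq c_{B}-\ep$) ahead of the interface, valid for all $s<0$, $t\geq s$, and this is asserted rather than proved. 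The tools you invoke cannot deliver it at this stage. The moving-frame comparison ``tied to the propagation speed of $\xi_{\theta}$'' fails because $\xi_{\theta}(t;s)$ oscillates: its instantaneous speed has no positive lower bound, and Lemma \ref{lem-rightward-prop-above-temp} only gives averaged rightward propagation of the levels $\la>\theta$ after a delay $t_{\ep,\la}$. A uniform tail decay estimate of the type you need is precisely Theorem \ref{thm-decaying-2}, which is proved in Section \ref{uniform-estimate} by a sliding argument resting on Theorem \ref{thm-bd-interface-width} and Theorem \ref{thm-non-flat}, both of which depend on the present lemma — so within the paper's architecture your route is circular unless you supply an independent proof; moreover even Theorem \ref{thm-decaying-2} yields only some small rate $c$ and only for $t\geq s+\hat{T}_{D}$, not $c_{B}-\ep$ for all $t\geq s$. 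The closing appeal to ``parabolic Harnack estimates in the transition zone'' does not identify an actual mechanism for the uniform margin. Note also that your proposal never uses the defining property of $\la_{*}$ that makes the lemma work.

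For contrast, the paper avoids any pointwise tail steepness. It chooses $\ka$ with $c^{*}_{\ka}=2\sqrt{\ka}<c_{B}$ and defines $\la_{*}$ by $f_{\sup}(u)\leq\ka u$ on $[0,\la_{*}]$; then, arguing by contradiction from a first time the gap $\xi-\xi_{\la}$ exceeds $C_{1}=C_{0}+c_{B}t_{\ep,\la}$, it uses the continuity of $\xi_{\la}$ (Lemma \ref{lem-negativity}) and the control of $\xi$ from Lemma \ref{lem-new-interface-bd} to pin down a time $t_{0}$ with $\xi(t_{0};s_{1})-\xi_{\la}(t_{0};s_{1})=C_{0}$, and on $[t_{0},t_{1}]$ compares $u$ with the exponential $e^{-\la_{\ka}(x-\xi(t_{0};s_{1})-c^{*}_{\ka}(t-t_{0}))}$ on the region to the right of the moving line, where $u\leq\la\leq\la_{*}$ guarantees $f(t,u)\leq\ka u$. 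This caps the advance of $\xi$ at speed $c^{*}_{\ka}$, while Lemma \ref{lem-rightward-prop-above-temp} pushes $\xi_{\la}$ at average speed $c_{B}-\ep>c^{*}_{\ka}$, contradicting the growth of the gap. If you want to salvage your approach, you would have to prove the uniform bound on $u^{*}$ directly, and the natural way to do so is essentially to redo this comparison argument.
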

\begin{proof}
We follow \cite[Lemma 2.5]{Zl13}.
Recall that for given $\ka>0$, $c^*_{\ka}=2\sqrt{\ka}$, and that  $c_{B}>0$ is the unique speed of traveling wave solutions of \eqref{eqn-bi-1}. We fix some $\ka\in(0,c_{\inf}^{2}]$ such that $c^*_{\ka}<c_{B}$, and set $\ep=\frac{c_{B}-c^*_{\ka}}{2}$ in Lemma \ref{lem-rightward-prop-above-temp}.
Let
\begin{equation*}
\la_{*}=\min\big\{u>0\big|f_{\sup}(u)=\ka u\big\}.
\end{equation*}
As a consequence, we have $f(t,u)\leq f_{\sup}(u)\leq \ka u$ for all $u\in[0,\la_{*}]$.

Fix an $\la\in(\theta,\la_{*}]$. Let $C_{0}=\max\{\xi(s;s)-\xi_{\la}(s;s),1\}$ (note $C_{0}$ is independent of $s$) and
$C_{1}=C_{0}+c_{B}t_{\ep,\la}$, where $t_{\ep,\la}$ is as in Lemma \ref{lem-rightward-prop-above-temp}. Notice the estimate $\xi_{\la}(t;s)-\xi(t;s)\leq C$ for some large $C>0$ is trivial. We show $\xi(t;s)-\xi_{\la}(t;s)\leq C_1$. Suppose this is not the case, then we can find some $t_{1}\geq s_{1}$ such $\xi(t_{1};s_{1})-\xi_{\la}(t_{1};s_{1})>C_1$. Let
\begin{equation*}
t_{0}=\sup\big\{t\in[s_{1},t_{1}]\big|\xi(t;s_{1})-\xi_{\la}(t;s_{1})\leq C_{0}\big\}.
\end{equation*}

We claim $\xi(t_{0};s_{1})-\xi_{\la}(t_{0};s_{1})\leq C_{0}$. It is trivial if there are only finitely many $t\in[s_{1},t_{1}]$ such that $\xi(t;s_{1})-\xi_{\la}(t;s_{1})\leq C_{0}$. So we assume there are infinitely many such $t$ and the claim is false. Then, there exists a sequence $\{\tilde{t}_{n}\}_{n\in\N}\subset[s_{1},t_{0})$ such that $\xi(\tilde{t}_{n};s_{1})-\xi_{\la}(\tilde{t}_{n};s_{1})\leq C_{0}$ for $n\in\N$ and $\tilde{t}_{n}\ra t_{0}$ as $n\ra\infty$. Moreover, $\xi(t_{0};s_{1})-\xi_{\la}(t_{0};s_{1})=\tilde C_{1}>C_{0}$. It then follows that for all $n\in\N$
\begin{equation*}
\xi(\tilde{t}_{n};s_{1})-\xi_{\la}(\tilde{t}_{n};s_{1})\leq C_{0}=C_{0}-\tilde C_{1}+\xi(t_{0};s_{1})-\xi_{\la}(t_{0};s_{1}),
\end{equation*}
that is,
\begin{equation*}
\tilde C_{1}-C_{0}+\xi_{\la}(t_{0};s_{1})-\xi_{\la}(\tilde{t}_{n};s_{1})\leq \xi(t_{0};s_{1})-\xi(\tilde{t}_{n};s_{1})\leq c_{\ka_{0}}(t_{0}-\tilde{t}_{n}),
\end{equation*}
where the second inequality is due to Lemma \ref{lem-new-interface-bd}.
Passing $n\ra\infty$, we conclude from the continuity of $\xi_{\la}(t;s_{1})$ in $t$ (see Lemma \ref{lem-negativity}) that $\tilde C_{1}-C_{0}\leq0$. It is a contradiction. Hence, the claim is true, that is, $\xi(t_{0};s_{1})-\xi_{\la}(t_{0};s_{1})\leq C_{0}$. It follows that $t_{0}<t_{1}$.

Instead of $\xi(t_{0};s_{1})-\xi_{\la}(t_{0};s_{1})\leq C_{0}$, there must hold
\begin{equation}\label{equality-technical}
\xi(t_{0};s_{1})-\xi_{\la}(t_{0};s_{1})=C_{0}.
\end{equation}
Suppose \eqref{equality-technical} is not true, then we can find some $\ep_{0}>0$ such that $\xi(t_{0};s_{1})-\xi_{\la}(t_{0};s_{1})=C_{0}-\ep_{0}$. Since $\xi(t;s_{1})-\xi_{\la}(t;s_{1})>C_{0}$ for $t\in(t_{0},t_{1}]$ by the definition of $t_{0}$, we deduce from Lemma \ref{lem-new-interface-bd} that for $t\in(t_{0},t_{1}]$
\begin{equation*}
\begin{split}
C_{0}<\xi(t;s_{1})-\xi_{\la}(t;s_{1})&\leq\xi(t_{0};s_{1})+c_{\ka_{0}}(t-t_{0})-\xi_{\la}(t_{0};s_{1})+\xi_{\la}(t_{0};s_{1})-\xi_{\la}(t;s_{1})\\
&=C_{0}-\ep_{0}+c_{\ka_{0}}(t-t_{0})+\xi_{\la}(t_{0};s_{1})-\xi_{\la}(t;s_{1}).
\end{split}
\end{equation*}
Since $\xi_{\la}(t;s_{1})$ is continuous in $t$, we fix some $t>t_{0}$ but close to $t_{0}$ such that $|c_{\ka_{0}}(t-t_{0})+\xi_{\la}(t_{0};s_{1})-\xi_{\la}(t;s_{1})|\leq\frac{\ep_{0}}{2}$, which then leads to $C_{0}<C_{0}-\frac{\ep_{0}}{2}$. It is a contradiction. Hence, \eqref{equality-technical} holds.

Next, we look at the time interval $[t_{0},t_{1}]$ and set $\tilde{\xi}(t;s_{1})=\xi(t_{0};s_{1})+c^*_{\ka}(t-t_{0})$ for $t\in[t_{0},t_{1}]$. Note both $\xi_{\la}(t;s_{1})$ and $\tilde{\xi}(t;s_{1})$ are continuous on $[t_{0},t_{1}]$ and $\xi_{\la}(t_{0};s_{1})<\tilde{\xi}(t_{0};s_{1})$ by \eqref{equality-technical}. We claim that $\xi_{\la}(t;s_{1})<\tilde{\xi}(t;s_{1})$ for all $t\in[t_{0},t_{1}]$. Suppose this is not the case and let $t_{2}=\min\{t\in[t_{0},t_{1}]|\xi_{\la}(t;s_{1})=\tilde{\xi}(t;s_{1})\}$. Clearly, $t_{2}\in(t_{0},t_{1}]$. Define
\begin{equation*}
v(t,x;t_{0})=e^{-\la_{\ka}(x-\tilde{\xi}(t;s_{1}))},\quad x\in\R,\,\, t\in[t_{0},t_{2}].
\end{equation*}
Since $\la_{\ka}^{2}-c^*_{\ka}\la_{\ka}+\ka=0$, we easily check $v_{t}=v_{xx}+\ka v$. Now, consider the parabolic domain
\begin{equation*}
D=\big\{(t,x)\in[t_{0},t_{2}]\times\R\big|x\geq\tilde{\xi}(t;s_{1})\big\}.
\end{equation*}
We see that for $(t,x)\in D$, $x\geq\tilde{\xi}(t;s_{1})\geq\xi_{\la}(t;s_{1})$, which leads to $u=u(t,x;s_{1})\leq\la\in(0,\la_{*}]$ by monotonicity, and then, $f(t,u)\leq \ka u$ as noted in the beginning of the proof. Also, at the initial moment $t_{0}$, we have $u(t_{0},x;s_{1})\leq e^{-\la_{\ka}(x-\xi(t_{0};s_{1}))}=v(t_{0},x;t_{0})$, and at the boundary point $x=\tilde{\xi}(t;s_{1})$, we trivially have $u(t,\tilde{\xi}(t;s_{1});s_{1})<1=v(t,\tilde{\xi}(t;s_{1});t_{0})$. Thus, comparison principle yields $u(t,x;s_{1})\leq v(t,x;t_{0})$ on $D$, which leads to
\begin{equation*}
u(t,x;s_{1})\leq v(t,x;t_{0})=e^{-\la_{\ka}(x-\tilde{\xi}(t;s_{1}))},\quad x\in\R,\,\, t\in[t_{0},t_{2}].
\end{equation*}
It follows that $\xi(t;s_{1})\leq\tilde{\xi}(t;s_{1})$ for $t\in[t_{0},t_{2}]$ by definition in \eqref{new-interface}. In particular, $\xi(t_{2};s_{1})\leq\tilde{\xi}(t_{2};s_{1})=\xi_{\la}(t_{2};s_{1})$. Since $t_{2}\in(t_{0},t_{1}]$, we have $\xi(t_{2};s_{1})-\xi_{\la}(t_{2};s_{1})>C_{0}$ by the definition of $t_{0}$. It is a contradiction. Thus, the claim follows, that is, $\xi_{\la}(t;s_{1})<\tilde{\xi}(t;s_{1})$ for all $t\in[t_{0},t_{1}]$, and repeating the above arguments, we see
\begin{equation}\label{propagation-estimate-refined}
\xi(t;s_{1})\leq\tilde{\xi}(t;s_{1})=\xi(t_{0};s_{1})+c^*_{\ka}(t-t_{0}),\quad t\in[t_{0},t_{1}].
\end{equation}

It follows from \eqref{propagation-estimate-refined} and  Lemma \ref{lem-rightward-prop-above-temp} that for any $t\in[t_{0},t_{1}]$
\begin{equation*}
\begin{split}
\xi(t;s_{1})-\xi_{\la}(t;s_{1})&\leq\xi(t_{0};s_{1})+c^*_{\ka}(t-t_{0})-[\xi_{\la}(t_{0};s_{1})+(c_{B}-\ep)(t-t_{0}-t_{\ep,\la})]\\
&\leq C_{0}+(c_{B}-\ep)t_{\ep,\la}-(c_{B}-c^*_{\ka}-\ep)(t-t_{0})\\
&\leq C_{0}+c_{B}t_{\ep,\la}\\
&=C_1.
\end{split}
\end{equation*}
Thus,  we in particular have $\xi(t_{1};s_{1})-\xi_{\la}(t_{1};s_{1})\leq C_{1}$, which is a contradiction. Consequently, $\xi(t;s)-\xi_{\la}(t;s)\leq C_{1}$ for all $s<0$, $t\geq s$. This completes the proof.
\end{proof}

The following proposition is in fact Theorem \ref{thm-bd-interface-width} restricted to the case $\la_{1},\la_{2}\in(0,\la_{*}]$.

\begin{prop}\label{prop-bd-interface-width}
For any $\la_{1},\la_{2}\in(0,\la_{*}]$ there exists $C=C(\la_{1},\la_{2})>0$ such that
\begin{equation*}
|\xi_{\la_{1}}(t;s)-\xi_{\la_{2}}(t;s)|\leq C
\end{equation*}
for all $s<0$ and $t\geq s$, where $\la_{*}\in(\theta,1)$ is as in Lemma \ref{lem-bd-width}.
\end{prop}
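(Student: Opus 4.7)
The plan is to reduce the proposition to the single estimate
\[
|\xi_\la(t;s) - \xi(t;s)| \le C(\la) \qquad \text{for every } \la \in (0, \la_*],
\]
and then conclude by the triangle inequality
\[
|\xi_{\la_1}(t;s) - \xi_{\la_2}(t;s)| \le |\xi_{\la_1}(t;s) - \xi(t;s)| + |\xi(t;s) - \xi_{\la_2}(t;s)|.
\]
For levels $\la \in (\theta, \la_*]$ this is exactly Lemma \ref{lem-bd-width}, so only the range $\la \in (0, \theta]$ needs attention.

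For the upper bound on this range, I would first observe that the set appearing in \eqref{new-interface} is closed in $y$, because for each fixed $x$ the condition $u(t,x;s) \le e^{-\la_\ka(x-y)}$ defines a closed half-line; indeed it is equivalent to $y \ge \frac{1}{\la_\ka}\ln\bigl(\sup_x u(t,x;s)e^{\la_\ka x}\bigr)$, so the infimum is attained and
\[
u(t,x;s) \le e^{-\la_\ka(x - \xi(t;s))}, \qquad x \in \R.
\]
Evaluating this at $x = \xi_\la(t;s)$ gives $\la \le e^{-\la_\ka(\xi_\la(t;s) - \xi(t;s))}$, hence
\[
\xi_\la(t;s) - \xi(t;s) \le \frac{|\ln \la|}{\la_\ka},
\]
which is uniform in $s<0$ and $t \ge s$.

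For the lower bound I would fix any auxiliary level $\tilde\la \in (\theta, \la_*]$, say $\tilde\la = (\theta + \la_*)/2$. Since $u(t,\cdot;s)$ is strictly decreasing by Lemma \ref{lem-basic-prop} and $\la \le \theta < \tilde\la$, monotonicity gives $\xi_\la(t;s) \ge \xi_{\tilde\la}(t;s)$. Applying Lemma \ref{lem-bd-width} to the fixed level $\tilde\la$ then yields
\[
\xi_\la(t;s) - \xi(t;s) \ge \xi_{\tilde\la}(t;s) - \xi(t;s) \ge -C(\tilde\la),
\]
and combining both estimates finishes the reduction. There is no substantive obstacle beyond what is already in Lemmas \ref{lem-rightward-prop-above-temp}--\ref{lem-bd-width}; the only point that deserves a brief verification is that the infimum in \eqref{new-interface} is attained, so that the exponential envelope can be plugged in at the level set $\{u=\la\}$.
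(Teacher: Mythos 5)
Your proof is correct and follows essentially the same route as the paper: both rest on the exponential envelope $u(t,x;s)\leq e^{-\la_{\ka}(x-\xi(t;s))}$ coming from the definition \eqref{new-interface} (evaluated at the level set to bound $\xi_{\la}$ from above relative to $\xi$), the monotonicity of $u(t,\cdot;s)$, and Lemma \ref{lem-bd-width} at a level in $(\theta,\la_{*}]$. The paper merely packages these same ingredients in a single chain $\xi_{\la_{1}}-\xi_{\la_{2}}\leq\eta_{\la_{1}}-\xi_{\la_{*}}$ instead of your symmetric reduction $|\xi_{\la}-\xi|\leq C(\la)$ plus the triangle inequality, so there is no substantive difference.
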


\begin{proof}
Fix some $\ka\in(0,c_{\inf}^{2}]$ such that $c^*_{\ka}<c_{B}$ as in the proof of Lemma \ref{lem-bd-width}. Let $\la_{*}\in(\theta,1)$ be as in Lemma \ref{lem-bd-width} and $\la_{1},\la_{2}\in(0,\la_{*}]$. We may assume, without loss of generality, that $\la_{1}<\la_{2}$. Thus, $\xi_{\la_{1}}(t;s)\geq\xi_{\la_{2}}(t;s)$, and
\begin{equation*}
\xi_{\la_{1}}(t;s)-\xi_{\la_{2}}(t;s)\leq\eta_{\la_{1}}(t;s)-\xi_{\la_{*}}(t;s),
\end{equation*}
where $\eta_{\la_{1}}(t;s)$ is the unique point such that $e^{-\la_{\ka}(\eta_{\la_{1}}(t;s)-\xi(t;s))}=\la_{1}$. Since $\eta_{\la_{1}}(t;s)-\xi(t;s)\equiv \hat{C}(\la_{1})$ for some $\hat{C}(\la_{1})>0$, we deduce from Lemma \ref{lem-bd-width} that $\xi_{\la_{1}}(t;s)-\xi_{\la_{2}}(t;s)\leq \hat{C}(\la_{1})+C(\la_{*})$.
\end{proof}

Note that in the presence of Proposition \ref{prop-bd-interface-width}, to finish the proof of Theorem \ref{thm-bd-interface-width}, we only need to bound $\xi_{\theta}(t;s)-\xi_{\la}(t;s)$ for all $\la\in(\theta,1)$ close to $1$. To do so, we need to study the propagation of $\xi_{\theta}(t;s)$.

Let $u^{o}(t;t_{0},a)$ be the solution of the ODE $u_{t}=f(t,u)$ with initial data $u^{o}(t_{0};t_{0},a)=a$. Let $\de\in(0,1-\theta)$. For $t_{0}\in\R$, $t\geq t_{0}$ and $x\in\R$, define
\begin{equation*}
\begin{split}
\om_{+}(t,x;t_{0})&=(\theta-\de)\big[1-\phi(x-x_{s}-C(t-t_{0}))\big]+u^{o}(t;t_{0},1+\de)\phi(x-x_{s}-C(t-t_{0})),\\
\om_{-}(t,x;t_{0})&=-\de\big[1-\phi(x+x_{s}+C(t-t_{0}))\big]+u^{o}(t;t_{0},\theta+\de)\phi(x+x_{s}+C(t-t_{0})),
\end{split}
\end{equation*}
where $C>0$ is some constant. Note that $u^{o}(t;t_{0},1+\de)$ and $u^{o}(t;t_{0},\theta+\de)$ are decreasing and increasing in $t$, respectively, and
$$
\lim_{t\ra\infty}u^{o}(t;t_{0},1+\de)=1=\lim_{t\ra\infty}u^{o}(t;t_{0},\theta+\de).
$$

\begin{lem}\label{lem-sub-super-sol}
For sufficiently large $C>0$, $\om_{+}(t,x;t_{0})$ and $\om_{-}(t,x;t_{0})$ are sup-solution and sub-solution of \eqref{main-eqn}, respectively.
\end{lem}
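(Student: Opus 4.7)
The plan is to verify the two differential inequalities
$$(\omega_+)_t - (\omega_+)_{xx} - f(t,\omega_+) \geq 0 \quad \text{and} \quad (\omega_-)_t - (\omega_-)_{xx} - f(t,\omega_-) \leq 0$$
by direct computation, exploiting the traveling-wave identity \eqref{traveling-homo} and the ODE $(u^o)_t = f(t,u^o)$ satisfied by the time-dependent factors in the definition of $\omega_\pm$.

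For $\omega_+$, set $\eta = x - x_s - C(t-t_0)$, $A = \theta-\delta$ and $B(t) = u^o(t;t_0,1+\delta)$, so that $\omega_+ = A + (B(t)-A)\phi(\eta)$. A direct differentiation together with the substitution $\phi'' = -c_{\inf}\phi' - f_{\inf}(\phi)$ yields
$$(\omega_+)_t - (\omega_+)_{xx} - f(t,\omega_+) = (C - c_{\inf})(B-A)(-\phi') + G,$$
where $G := f(t,B)\phi + (B-A)f_{\inf}(\phi) - f(t,\omega_+)$. Since $B - A \geq 1 - \theta + \delta > 0$ and $-\phi'>0$, the first summand is non-negative and grows linearly with $C$; the idea is to dominate the remainder $G$ with it. A direct evaluation shows that $G$ vanishes at both endpoints: at $\phi = 0$ one has $\omega_+ = A < \theta$ and $f_{\inf}(0) = 0$, killing every term; at $\phi = 1$ the cancellation $f(t,B)\cdot 1 - f(t,B) = 0$ together with $f_{\inf}(1) = 0$ does the job.

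To make the domination precise, I would split the $\eta$-axis into three zones. In the far-right zone where $\omega_+ \leq \theta$, only $f(t,B)\phi$ survives in $G$, and the explicit formula $\phi(\eta) = \theta e^{-c_{\inf}\eta}$ gives $-\phi' = c_{\inf}\phi$, reducing the required inequality to $(C-c_{\inf})c_{\inf}(B-A) \geq |f(t,B)|$, which is achievable for $C$ large since $|B-1| \leq \delta$ and $f(t,\cdot)$ is uniformly Lipschitz near $u=1$ by (H1)--(H2). In a middle zone $\phi \in [\varepsilon, 1-\varepsilon]$, the factor $-\phi'$ is bounded below by a positive constant while $|G|$ is bounded by a constant depending only on the Lipschitz constants of $f, f_{\inf}, f_{\sup}$ on $[0,1+\delta]$, so $C$ large again suffices. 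In the far-left zone where $\omega_+ \in [1,1+\delta]$, Lipschitz continuity of $f$ at $u=1$ yields $|G| \leq K(1-\phi)$ for some $K>0$, while linearizing \eqref{traveling-homo} at $\phi = 1$ gives $-\phi' \sim \mu(1-\phi)$ as $\eta \to -\infty$ for some $\mu > 0$, so the domination persists.

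The argument for $\omega_-$ is entirely analogous. With $\tilde\eta = x + x_s + C(t-t_0)$, $A_- = -\delta$ and $B_-(t) = u^o(t;t_0,\theta+\delta) \in [\theta+\delta, 1)$, one computes
$$(\omega_-)_t - (\omega_-)_{xx} - f(t,\omega_-) = -(C+c_{\inf})(B_- - A_-)(-\phi'(\tilde\eta)) + G_-,$$
with $G_- := f(t,B_-)\phi + (B_- - A_-)f_{\inf}(\phi) - f(t,\omega_-)$ again vanishing at both endpoints, and the same three-zone analysis delivers the reverse inequality. The main obstacle throughout is the zone $\phi \to 1$, where both the dominating term and the remainder decay exponentially; the linearization of \eqref{traveling-homo} at $\phi=1$ is the clean way to verify that their exponential rates compare favorably and a single sufficiently large $C$ handles all regions simultaneously.
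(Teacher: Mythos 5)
Your proposal is correct and follows essentially the same route as the paper: substitute the traveling-wave ODE to isolate the good term $(C-c_{\inf})(B-A)(-\phi')$, then dominate the remainder using the explicit formula $\phi(\eta)=\theta e^{-c_{\inf}\eta}$ (so $-\phi'=c_{\inf}\phi$) where $\om_+\leq\theta$ and the comparability of $-\phi'$ with $1-\phi$ as $\phi\to1$ coming from the linearization of \eqref{traveling-homo} at $1$. Your three-zone split with a compact middle zone is just slightly finer bookkeeping of the paper's two cases (the paper discards the favorable $(B-A)f_{\inf}(\phi)$ term and uses a Taylor expansion where $\om_+>\theta$), and your explicit $\om_-$ computation covers what the paper dismisses as similar.
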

\begin{proof}
We only prove that $\om_{+}(t,x;t_{0})$ is a super-solution for sufficiently large $C$; $\om_{-}(t,x;t_{0})$ being a sub-solution for sufficiently large $C$ can be proven similarly. We compute
\begin{equation*}
\begin{split}
&(\om_{+})_{t}-(\om_{+})_{xx}-f(t,\om_{+})\\
&\quad\quad=(\theta-\de-u^{o}(t;t_{0},1+\de))\big[C\phi'(x-x_{s}-C(t-t_{0}))+\phi''(x-x_{s}-C(t-t_{0}))\big]\\
&\quad\quad\quad+f(t,u^{o}(t;t_{0},1+\de))\phi(x-x_{s}-C(t-t_{0}))-f(t,\om_{+})\\
&\quad\quad=(C-c_{\inf})(\theta-\de-u^{o}(t;t_{0},1+\de))\phi'(x-x_{s}-C(t-t_{0}))\\
&\quad\quad\quad+(u^{o}(t;t_{0},1+\de)-\theta+\de)f_{\inf}(\phi(x-x_{s}-C(t-t_{0})))\\
&\quad\quad\quad+f(t,u^{o}(t;t_{0},1+\de))\phi(x-x_{s}-C(t-t_{0}))-f(t,\om_{+})\\
&\quad\quad\geq(C-c_{\inf})(\theta-\de-u^{o}(t;t_{0},1+\de))\phi'(x-x_{s}-C(t-t_{0}))\\
&\quad\quad\quad+f(t,u^{o}(t;t_{0},1+\de))\phi(x-x_{s}-C(t-t_{0}))-f(t,\om_{+}),\\
\end{split}
\end{equation*}
where we used the equation in \eqref{traveling-homo} in the second equality.

There are two cases. If $\om_{+}\leq\theta$, then $f(t,\om_{+})=0$ and $u^{o}(t;t_{0},1+\de)\phi(x-x_{s}-C(t-t_{0}))\leq\theta$, which forces $\phi(x-x_{s}-C(t-t_{0}))\leq\theta$ and hence, $x-x_{s}-C(t-t_{0})\geq0$ by monotonicity. We then conclude from \eqref{explicit-sol} or the way $(\phi(z),\phi'(z))$ approaches $(0,0)$ as $z\ra\infty$ that $\phi(x-x_{s}-C(t-t_{0}))$ and $\phi'(x-x_{s}-C(t-t_{0}))$ are comparable, which leads to
\begin{equation}\label{sup-sol-propagation}
(\om_{+})_{t}-(\om_{+})_{xx}-f(t,\om_{+})\geq0
\end{equation}
for sufficiently large $C>0$.

If $\om_{+}>\theta$, then by Taylor expansion,
\begin{equation*}
\begin{split}
&f(t,u^{o}(t;t_{0},1+\de))\phi(x-x_{s}-C(t-t_{0}))-f(t,\om_{+})\\
&\quad\quad=[f(t,u^{o}(t;t_{0},1+\de))-f(t,\om_{+})\big]\phi(x-x_{s}-C(t-t_{0}))\\
&\quad\quad\quad+f(t,\om_{+})\big[1-\phi(x-x_{s}-C(t-t_{0}))\big]\\
&\quad\quad=f_{u}(t,u_{*})(u^{o}(t;t_{0},1+\de)-\theta+\de)\big[1-\phi(x-x_{s}-C(t-t_{0}))\big]\phi(x-x_{s}-C(t-t_{0}))\\
&\quad\quad\quad+f(t,\om_{+})\big[1-\phi(x-x_{s}-C(t-t_{0}))\big],
\end{split}
\end{equation*}
where $u_{*}\in[\om_{+},u^{o}(t;t_{0},1+\de)]$. Note that the condition $\om_{+}>\theta$ forces $x-x_{s}-C(t-t_{0})\leq x_{*}$ for some universal constant $x_{*}>0$. We then conclude from the way $(\phi(z),\phi'(z))$ approaches $(1,0)$ as $z\ra-\infty$ that $1-\phi(x-x_{s}-C(t-t_{0}))$ and $\phi'(x-x_{s}-C(t-t_{0}))$ are comparable, and hence, \eqref{sup-sol-propagation} holds as well for sufficiently large $C>0$.
\end{proof}

The next result concerns the propagation of $\xi_{\theta}(t;s)$.

\begin{prop}\label{prop-bd-propagation}
For any $T_{0}>0$, there exists $h_{0}=h_{0}(T_{0})>0$ such that
\begin{equation*}
|\xi_{\theta}(t+T_{0};s)-\xi_{\theta}(t;s)|\leq h_{0}
\end{equation*}
for all $s<0$, $t\geq s$. Moreover, $h_{0}(T_{0})$ is increasing in $T_{0}$.
\end{prop}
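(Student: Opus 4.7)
The plan is to control the rightward and leftward displacement of $\xi_\theta(\cdot\,;s)$ over the window $[t,t+T_0]$ separately, using complementary tools from the preceding material: for the rightward bound, I will combine the exponential envelope $\xi$ from \eqref{new-interface} with Lemma \ref{lem-new-interface-bd}; for the leftward bound, I will use the sub-solution $\om_-$ from Lemma \ref{lem-sub-super-sol}. Once both directions are controlled by explicit affine expressions in $T_0$, $h_0(T_0)$ will be defined as the larger of the two (and is then automatically increasing).

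For the upper bound, fix any $\la\in(\theta,\la_*]$. The triangle inequality together with Proposition \ref{prop-bd-interface-width} and Lemma \ref{lem-bd-width} yields a constant $C_*>0$ independent of $s<0$ and $t\ge s$ with
\[
|\xi_\theta(t;s)-\xi(t;s)|\le|\xi_\theta(t;s)-\xi_\la(t;s)|+|\xi_\la(t;s)-\xi(t;s)|\le C_*.
\]
Combining with Lemma \ref{lem-new-interface-bd} I then obtain
\[
\xi_\theta(t+T_0;s)-\xi_\theta(t;s)\le 2C_*+[\xi(t+T_0;s)-\xi(t;s)]\le 2C_*+c_{\ka_0}T_0.
\]

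For the lower bound, the shift parameter $x_s$ appearing in $\om_-$ is free, and I would choose it (depending on $t,s$) so that $\om_-(t,\cdot\,;t)\le u(t,\cdot\,;s)$ on all of $\R$. Since $\om_-(t,\cdot\,;t)$ has a fixed transition profile decreasing monotonically from $\theta+\de$ at $-\infty$ to $-\de$ at $+\infty$, centered near $-x_s$, and since by Proposition \ref{prop-bd-interface-width} the point $x_{\theta+\de}(t;s)$ where $u(t,\cdot\,;s)=\theta+\de$ lies within a uniformly bounded distance of $\xi_\theta(t;s)$, it suffices to shift $\om_-$ so that $\om_-(t,x;t)\le 0$ for every $x\ge x_{\theta+\de}(t;s)$: for such $x$ one has $u\ge 0\ge\om_-$, while for $x<x_{\theta+\de}(t;s)$ one has $u\ge\theta+\de\ge\om_-$. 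A direct computation shows that achieving $\om_-(t,x_{\theta+\de}(t;s);t)\le 0$ requires only a $\de$-dependent universal shift, so the required choice of $x_s$ is available. The comparison principle then gives $u(t+T_0,x;s)\ge\om_-(t+T_0,x;t)$ for all $x$. Solving $\om_-(t+T_0,x;t)=\theta$, using that $u^o(t+T_0;t,\theta+\de)\in[\theta+\de,1]$ so $\phi^{-1}$ is applied to a quantity bounded away from $0$ and $1$, yields a point $x\ge\xi_\theta(t;s)-CT_0-K_0$ for some universal $K_0>0$, where $C$ is the speed constant from Lemma \ref{lem-sub-super-sol}. Monotonicity of $u(t+T_0,\cdot\,;s)$ in $x$ (Lemma \ref{lem-basic-prop}) then gives $\xi_\theta(t+T_0;s)\ge\xi_\theta(t;s)-CT_0-K_0$, and taking $h_0(T_0):=\max\{2C_*+c_{\ka_0}T_0,\,CT_0+K_0\}$ completes the estimate.

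The main obstacle is the lower bound. For the upper bound, the envelope $\xi$ provides an explicit global majorant whose velocity is known a priori; in the leftward direction no such cheap device is available, and one has to manufacture the sub-solution $\om_-$ and drop it initially below $u(t,\cdot\,;s)$ using only the coarse level-set information provided by Proposition \ref{prop-bd-interface-width}, which is precisely why that proposition is an indispensable input here.
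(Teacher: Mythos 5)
Your argument is correct, but it splits from the paper halfway. For the lower (leftward) bound you do essentially what the paper does: shift the sub-solution $\om_{-}$ of Lemma \ref{lem-sub-super-sol} so that it sits below $u(t,\cdot;s)$ at the initial time, using only that $u\geq\theta+\de$ left of $\xi_{\theta+\de}(t;s)$ and $u\geq 0\geq\om_{-}$ to the right of it, then let it retreat at speed $C$; your variant of the endgame (solving $\om_{-}=\theta$ directly, noting that $\phi^{-1}$ is evaluated on $\bigl[\tfrac{\theta+\de}{1+\de},\tfrac{\theta+\de}{\theta+2\de}\bigr]$, a compact subinterval of $(0,1)$) replaces the paper's step of tracking the level $\theta+\tfrac{\de_{*}}{2}$ and converting back to $\theta$ via Proposition \ref{prop-bd-interface-width}, and is equally valid. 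For the upper (rightward) bound, however, the paper stays symmetric and uses the super-solution $\om_{+}$, again closing with Proposition \ref{prop-bd-interface-width}, whereas you bypass $\om_{+}$ entirely by pinning $\xi_{\theta}(t;s)$ to the exponential envelope $\xi(t;s)$ through Lemma \ref{lem-bd-width} and Proposition \ref{prop-bd-interface-width}, and then invoking the a priori speed bound $c_{\ka_{0}}$ of Lemma \ref{lem-new-interface-bd}. This is a perfectly sound shortcut; what it buys is that you never need the super-solution computation for the rightward direction, at the price of importing Lemma \ref{lem-bd-width}, which is the most technical result of Section \ref{sec-bd-interface-width}, while the paper's route only needs the comparatively soft Lemma \ref{lem-sub-super-sol} plus Proposition \ref{prop-bd-interface-width}. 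Two small points you should make explicit: the level $\theta+\de$ must satisfy $\theta+\de\leq\la_{*}$ (e.g.\ take $\de=\la_{*}-\theta$ as the paper does), since Proposition \ref{prop-bd-interface-width} only covers levels in $(0,\la_{*}]$; and your final $h_{0}(T_{0})=\max\{2C_{*}+c_{\ka_{0}}T_{0},\,CT_{0}+K_{0}\}$ is indeed increasing in $T_{0}$, so the ``moreover'' clause is covered.
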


\begin{proof}
Let $\de_{*}=\la_{*}-\theta$, where $\la_{*}$ is as in  Lemma \ref{lem-bd-width}. First, since
\begin{equation*}
\begin{split}
\om_{+}(t_{0},x;t_{0})&=(\theta-\de_{*})\big[1-\phi(x-x_{s})\big]+(1+\de_{*})\phi(x-x_{s}),\\
\om_{-}(t_{0},x;t_{0})&=-\de_{*}\big[1-\phi(x+x_{s})\big]+(\theta+\de_{*})\phi(x+x_{s}),
\end{split}
\end{equation*}
we can find some $x_{*}<0$ such that $\om_{+}(t_{0},x_{*}+x_{s};t_{0})\geq1$ and $\om_{-}(t_{0},-x_{*}-x_{s};t_{0})\leq0$,
which yields
\begin{equation*}
\om_{-}(t_{0},x-\xi_{\theta+\de_{*}}(t_{0};s)-x_{*}-x_{s};t_{0})\leq u(t_{0},x;s)\leq\om_{+}(t_{0},x-\xi_{\theta-\de_{*}}(t_{0};s)+x_{*}+x_{s};t_{0})
\end{equation*}
for all $x\in\R$. It then follows from Lemma \ref{lem-sub-super-sol} and comparison principle that
\begin{equation}\label{estimate-lower-upper-bd}
\om_{-}(t,x-\xi_{\theta+\de_{*}}(t_{0};s)-x_{*}-x_{s};t_{0})\leq u(t,x;s)\leq\om_{+}(t,x-\xi_{\theta-\de_{*}}(t_{0};s)+x_{*}+x_{s};t_{0})
\end{equation}
for all $x\in\R$ and $t\geq t_{0}$.

We now fix any $T_{0}>0$. Since
\begin{equation*}
\begin{split}
\om_{+}(t_{0}+T_{0},x+CT_{0};t_{0})&=(\theta-\de_{*})\big[1-\phi(x-x_{s})\big]+u^{o}(t_{0}+T_{0};t_{0},1+\de_{*})\phi(x-x_{s}),\\
&\leq(\theta-\de_{*})\big[1-\phi(x-x_{s})\big]+(1+\de_{*})\phi(x-x_{s}),\\
\om_{-}(t_{0}+T_{0},x-CT_{0};t_{0})&=-\de_{*}\big[1-\phi(x+x_{s})\big]+u^{o}(t_{0}+T_{0};t_{0},\theta+\de_{*})\phi(x+x_{s}),\\
&\geq-\de_{*}\big[1-\phi(x+x_{s})\big]+(\theta+\de_{*})\phi(x+x_{s}),
\end{split}
\end{equation*}
we can find some $x_{**}>0$ such that
\begin{equation*}
\begin{split}
\om_{+}(t_{0}+T_{0},x_{**}+x_{s}+CT_{0};t_{0})&\leq\theta-\frac{\de_{*}}{2},\\
\om_{-}(t_{0}+T_{0},-x_{**}-x_{s}-CT_{0};t_{0})&\geq\theta+\frac{\de_{*}}{2}.
\end{split}
\end{equation*}
This together with \eqref{estimate-lower-upper-bd} gives
\begin{equation*}
\begin{split}
u(t_{0}+T_{0},\xi_{\theta-\de_{*}}(t_{0};s)-x_{*}+x_{**}+CT_{0};s)&\leq\theta-\frac{\de_{*}}{2},\\
u(t_{0}+T_{0},\xi_{\theta+\de_{*}}(t_{0};s)+x_{*}-x_{**}-CT_{0};s)&\geq\theta+\frac{\de_{*}}{2}.
\end{split}
\end{equation*}
By monotonicity, we find
\begin{equation}\label{estimate-propagation-aux}
\begin{split}
\xi_{\theta-\frac{\de_{*}}{2}}(t_{0}+T_{0};s)&\leq\xi_{\theta-\de_{*}}(t_{0};s)-x_{*}+x_{**}+CT_{0},\\
\xi_{\theta+\frac{\de_{*}}{2}}(t_{0}+T_{0};s)&\geq\xi_{\theta+\de_{*}}(t_{0};s)+x_{*}-x_{**}-CT_{0}.
\end{split}
\end{equation}

Finally, to finish the proof, we set
$$
h_{0}=h_{0}(T_{0})=-x_{*}+x_{**}+CT_{0}+C(\theta+\de_{*},\theta-\de_{*}),
$$
where $C(\theta+\de_{*},\theta-\de_{*})>0$ is as in Proposition \ref{prop-bd-interface-width}. Then,
\begin{equation*}
\begin{split}
&\xi_{\theta}(t_{0}+T_{0};s)-\xi_{\theta}(t_{0};s)\\
&\quad\quad\leq\xi_{\theta-\frac{\de_{*}}{2}}(t_{0}+T_{0};s)-\xi_{\theta+\de_{*}}(t_{0};s)\\
&\quad\quad=\xi_{\theta-\frac{\de_{*}}{2}}(t_{0}+T_{0};s)-\xi_{\theta-\de_{*}}(t_{0};s)+\xi_{\theta-\de_{*}}(t_{0};s)-\xi_{\theta+\de_{*}}(t_{0};s)\\
&\quad\quad\leq-x_{*}+x_{**}+CT_{0}+C(\theta+\de_{*},\theta-\de_{*})=h_{0},
\end{split}
\end{equation*}
where we used the first estimate in \eqref{estimate-propagation-aux} and Proposition \ref{prop-bd-interface-width}. Similarly, by the second estimate in \eqref{estimate-propagation-aux} and Proposition \ref{prop-bd-interface-width}, we deduce $\xi_{\theta}(t_{0}+T_{0};s)-\xi_{\theta}(t_{0};s)\geq-h_{0}$. This completes the proof.
\end{proof}

Finally, we prove Theorem \ref{thm-bd-interface-width}.

\begin{proof}[Proof of Theorem \ref{thm-bd-interface-width}]
Note that in the presence of Proposition \ref{prop-bd-interface-width}, we only need to bound $\xi_{\theta}(t;s)-\xi_{\la}(t;s)$ for all $\la\in(\theta,1)$ close to $1$. To do so, let $\de_{*}=\la_{*}-\theta$ be as in the proof of Proposition \ref{prop-bd-propagation}. Recall that $u^{o}(t;t_{0},\theta+\de_{*})$ is increasing in $t$ and $\lim_{t\ra\infty}u^{o}(t;t_{0},\theta+\de_{*})=1$. From which, we can find some $T_{-}>0$ and $x_{-}<0$ such that
\begin{equation*}
\om_{-}(t,x_{-}-x_{s}-C(t-t_{0});t_{0})\geq1-\de_{*},\quad t\geq t_{0}+T_{-}.
\end{equation*}
Using the first inequality in \eqref{estimate-lower-upper-bd}, we find
\begin{equation*}
u(t,\xi_{\theta+\de_{*}}(t_{0};s)+x_{*}+x_{-}-C(t-t_{0});s)\geq1-\de_{*},\quad t\geq t_{0}+T_{-}.
\end{equation*}
By monotonicity,
\begin{equation*}
\xi_{1-\de_{*}}(t;s)\geq\xi_{\theta+\de_{*}}(t_{0};s)+x_{*}+x_{-}-C(t-t_{0}),\quad t\geq t_{0}+T_{-}.
\end{equation*}
Setting $t=t_{0}+T_{-}$ in the above estimate, we find
\begin{equation*}
\xi_{1-\de_{*}}(t_{0}+T_{-};s)\geq\xi_{\theta+\de_{*}}(t_{0};s)+x_{*}+x_{-}-CT_{-}.
\end{equation*}
Since $\xi_{\theta}(t_{0}+T_{-};s)\leq\xi_{\theta}(t_{0};s)+h_{0}(T_{-})$ by Proposition \ref{prop-bd-propagation}, we find
\begin{equation*}
\begin{split}
&\xi_{\theta}(t_{0}+T_{-};s)-\xi_{1-\de_{*}}(t_{0}+T_{-};s)\\
&\quad\quad\leq\xi_{\theta}(t_{0};s)-\xi_{\theta+\de_{*}}(t_{0};s)+h_{0}(T_{-})-x_{*}-x_{-}+CT_{-}\\
&\quad\quad\leq\ep_{*}+h_{0}(T_{-})-x_{*}-x_{-}+CT_{-}
\end{split}
\end{equation*}
by Proposition \ref{prop-bd-interface-width}, where $\ep_{*}=C(\theta,\theta+\de_{*})$. Since $t_{0}\geq s$ is arbitrary, we arrive at
\begin{equation*}
\xi_{\theta}(t;s)-\xi_{1-\de_{*}}(t;s)\leq\ep_{*}+h_{0}(T_{-})-x_{*}-x_{-}+CT_{-},\quad t\geq s+T_{-}.
\end{equation*}

For the time interval $[s,s+T_{-}]$, we consider space-time homogeneous equations
\begin{equation}\label{eqn-space-time-homo}
\begin{split}
u_{t}=u_{xx}+f_{\inf}(u),\quad u_{t}=u_{xx}+f_{\sup}(u).
\end{split}
\end{equation}
Let $u_{\inf}(t,x;s)$ and $u_{\sup}(t,x;s)$ be solutions of the first and the second equation in \eqref{eqn-space-time-homo}, respectively, with $u_{\inf}(s,x;s)=\phi(x-x_{s})=u_{\sup}(s,x;s)$. By comparison principle and homogeneity, we find
\begin{equation*}
u_{\inf}(t-s,x;0)\leq u(t,x;s)\leq u_{\sup}(t-s,x;0),\quad x\in\R,\,\,t\geq s.
\end{equation*}
Denote by $\xi_{1-\de_{*}}^{\inf}(t-s)$ be the unique point such that $u_{\inf}(t-s,\xi_{1-\de_{*}}^{\inf}(t-s);0)=1-\de_{*}$ and by $\xi_{\theta}^{\sup}(t-s)$ be the unique point such that $u_{\sup}(t-s,\xi_{\theta}^{\sup}(t-s);0)=\theta$. Then, for $t\in[s,s+T_{-}]$ we have
\begin{equation*}
-\infty<\inf_{t\in[s,s+T_{-}]}\xi_{1-\de_{*}}^{\inf}(t-s)\leq\xi_{1-\de_{*}}(t;s)<\xi_{\theta}(t;s)\leq\sup_{t\in[s,s+T_{-}]}\xi_{\theta}^{\sup}(t-s)<\infty.
\end{equation*}
Setting
\begin{equation*}
\ep_{**}=\sup_{t\in[s,s+T_{-}]}\xi_{\theta}^{\sup}(t-s)-\inf_{t\in[s,s+T_{-}]}\xi_{1-\de_{*}}^{\inf}(t-s),
\end{equation*}
we find $\xi_{\theta}(t;s)-\xi_{1-\de_{*}}(t;s)\leq\ep_{**}$ for $t\in[s,s+T_{-}]$. Thus, setting
\begin{equation*}
\ep_{***}=\max\big\{\ep_{**},\ep_{*}+h_{0}(T_{-})-x_{*}-x_{-}+CT_{-}\big\},
\end{equation*}
we have
\begin{equation}
\label{width-bd-eq}
\xi_\theta(t;s)-\xi_{1-\delta_{*}}(t;s)\le\epsilon_{***},\quad s<0,\,\, t\geq s.
\end{equation}

The theorem then follows from Proposition \ref{prop-bd-interface-width} and \eqref{width-bd-eq}.
\end{proof}


\section{Uniform Steepness Estimate}\label{Non-Flat Estimate Near Ignition Temperature}

This section is devoted to the uniform steepness of $u(t,x;s)$ near $\xi_{\theta}(t;s)$. Through this section, we assume $\rm(H1)$ and $\rm(H2)$. The main result is the following

\begin{thm}\label{thm-non-flat}
There exist a constant $T_{D}>0$ and a continuous nonincreasing function $\al:[0,\infty)\ra(0,\infty)$ such that for any $M\geq0$ there holds
\begin{equation*}
u_{x}(t,x;s)\leq-\al(M),\quad x\in[\xi_{\theta}(t;s)-M,\xi_{\theta}(t;s)+M]
\end{equation*}
for all $s<0$, $t\geq s+T_{D}$. In particular, the following statements hold.

\begin{itemize}
\item[\rm(i)] For any $\la\in(0,1)$, there is $\al_{\la}>0$ such that
\begin{equation*}
u_{x}(t,\xi_{\la}(t;s);s)\leq-\al_{\la}
\end{equation*}
for all $s<0$ and $t\geq s+T_{D}$. Moreover, the function $\la\mapsto\al_{\la}:(0,1)\ra(0,\infty)$ is continuous and bounded.

\item[\rm(ii)] For any $\la\in(0,1)$, there exists $C_{\la}>0$ such that
\begin{equation*}
\sup_{s<0,t\geq s+T_{D}}\bigg|\frac{d\xi_{\la}(t;s)}{dt}\bigg|\leq C_{\la}.
\end{equation*}
\end{itemize}
\end{thm}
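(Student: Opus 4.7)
The plan is to prove the main steepness inequality by a compactness-and-contradiction argument, and then to read off $\rm(i)$ and $\rm(ii)$ as corollaries via Theorem~\ref{thm-bd-interface-width} and Lemma~\ref{lem-negativity}. Fix $T_D>0$ large enough that standard interior parabolic estimates yield uniform $C^{1,2}_{\mathrm{loc}}$ bounds on bounded solutions of \eqref{main-eqn} after elapsed time $T_D$. Suppose the main inequality fails; then there exist $M\ge 0$ and sequences $s_n<0$, $t_n\ge s_n+T_D$, $x_n$ with $|x_n-\xi_\theta(t_n;s_n)|\le M$ and $u_x(t_n,x_n;s_n)\to 0$. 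Translate: set $y_n:=\xi_\theta(t_n;s_n)$, $z_n:=x_n-y_n\in[-M,M]$, and define $v_n(t,x):=u(t+t_n,x+y_n;s_n)$, which solves $(v_n)_t=(v_n)_{xx}+f_n(t,v_n)$ with $f_n(t,u):=f(t+t_n,u)$. Using $\rm(H1)$, $\rm(H2)$ and parabolic regularity, extract a subsequence along which $z_n\to z^*\in[-M,M]$, $v_n\to\tilde v$ in $C^{1,2}_{\mathrm{loc}}$ and $f_n\to\tilde f$ locally, with $\tilde f$ still obeying $\rm(H1)$--$\rm(H2)$; the limit $\tilde v$ solves $\tilde v_t=\tilde v_{xx}+\tilde f(t,\tilde v)$ on $[-T_D,\infty)\times\R$.

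Next I read off three properties of $\tilde v$: (a) $\tilde v(0,0)=\theta$, since $u(t_n,y_n;s_n)=\theta$; (b) $\tilde v_x\le 0$ everywhere, since $u_x<0$ by Lemma~\ref{lem-basic-prop}; and (c) $\tilde v_x(0,z^*)=0$, by the choice of $x_n$. Crucially, I invoke Theorem~\ref{thm-bd-interface-width} to control the shape of $\tilde v(0,\cdot)$: for each $\lambda\in(0,1)$ the offset $\xi_\lambda(t_n;s_n)-y_n$ stays in the compact interval $[-C(\lambda,\theta),C(\lambda,\theta)]$, so along a diagonal subsequence there exist points $\eta_\lambda$ with $\tilde v(0,\eta_\lambda)=\lambda$; letting $\lambda\uparrow 1$ and $\lambda\downarrow 0$ and using the same width bound gives $\tilde v(0,x)\to 1$ as $x\to-\infty$ and $\tilde v(0,x)\to 0$ as $x\to+\infty$. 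In particular $\tilde v(0,\cdot)$ is non-constant.

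To derive the contradiction, apply the strong maximum principle to $w:=\tilde v_x\le 0$, which satisfies the linear parabolic equation
\begin{equation*}
w_t-w_{xx}=c(t,x)\,w,\qquad c(t,x):=\tilde f_u(t,\tilde v(t,x)),
\end{equation*}
with $c$ bounded since $\tilde f$ is Lipschitz in $u$ uniformly in $t$. Because $w$ attains its maximum value $0$ at the interior point $(0,z^*)$, the parabolic strong maximum principle forces $w\equiv 0$ on $[-T_D,0]\times\R$. Hence $\tilde v(0,\cdot)$ is constant, contradicting the previous paragraph. Therefore, for each $M\ge 0$ the quantity $\inf\{-u_x(t,x;s):x\in[\xi_\theta(t;s)-M,\xi_\theta(t;s)+M],\,s<0,\,t\ge s+T_D\}$ is strictly positive; this infimum is automatically nonincreasing in $M$, and one then dominates it from below by a continuous nonincreasing function to obtain the required $\alpha$.

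Consequences $\rm(i)$ and $\rm(ii)$ then follow quickly. For $\rm(i)$, Theorem~\ref{thm-bd-interface-width} gives $|\xi_\lambda(t;s)-\xi_\theta(t;s)|\le C(\lambda,\theta)$, so the main estimate with $M=C(\lambda,\theta)$ yields $u_x(t,\xi_\lambda(t;s);s)\le -\alpha(C(\lambda,\theta))=:-\alpha_\lambda$; continuity and boundedness of $\lambda\mapsto\alpha_\lambda$ are inherited from the corresponding properties of $C(\cdot,\theta)$ and from the a~priori $C^1$ bound on $u$. For $\rm(ii)$, Lemma~\ref{lem-negativity} gives $d\xi_\lambda/dt=-u_t(t,\xi_\lambda;s)/u_x(t,\xi_\lambda;s)$, where the numerator is bounded uniformly in $t\ge s+T_D$ by the PDE together with parabolic regularity and $\rm(H1)$--$\rm(H2)$, and the denominator is bounded away from $0$ by $\rm(i)$. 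The main obstacle I anticipate is the strong maximum principle step: the coefficient $\tilde f_u$ is not defined classically on the level set $\{\tilde v=\theta\}$, so one either invokes the form of the parabolic SMP valid for $L^\infty$ coefficients (via Krylov--Safonov and Hopf), or splits the slab into the open sets $\{\tilde v>\theta\}$ and $\{\tilde v<\theta\}$ and applies the classical SMP on each, propagating the zero of $w$ across $\{\tilde v=\theta\}$ by continuity of $\tilde v$ and the fact that $\tilde f\equiv 0$ there.
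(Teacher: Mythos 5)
Your argument is correct in outline, but it is genuinely different from the paper's proof. The paper proves the steepness estimate \emph{constructively}: it applies Lemma~\ref{derivative-integral-estimate} (a heat-kernel/comparison bound of the form $u_x(t,x;s)\le J(t-t_0,|x-z|+h)\int_{z-h}^{z+h}u_x(t_0,y;s)\,dy$) with $z=\xi_\theta(t_0;s)$ and $h=h_\theta$ chosen via Theorem~\ref{thm-bd-interface-width}, so that the integral collapses to $u(t_0,\xi_{\theta/2};s)-u(t_0,\xi_{(1+\theta)/2};s)=-\tfrac12$; then Proposition~\ref{prop-bd-propagation} recenters the interval at a delayed time and produces the explicit negative number $-\tfrac12 J(T_0,M+h_0+h_\theta)$ as the bound $-\alpha(M)$. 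This automatically yields a concrete continuous nonincreasing $\alpha$. Your proof instead goes by compactness-and-contradiction plus the parabolic strong maximum principle on $\tilde v_x$, with the front structure of the limit supplied by the same Theorem~\ref{thm-bd-interface-width}. Both are sound; the paper's buys explicit, quantitative constants at the cost of the extra Lemma~\ref{derivative-integral-estimate}, while yours is softer and more robust (it would survive variants of the equation) at the cost of needing to (a) extract a convergent subsequence $f_n\to\tilde f$ with (H1)--(H2) preserved, and (b) justify the SMP with the merely bounded coefficient $\tilde f_u$ on $\{\tilde v=\theta\}$. Your proposed fix for (b) is fine, and an even simpler route is to move the zeroth-order term to the forcing side: with $\tilde w:=-\tilde v_x\ge 0$ and $L$ the Lipschitz constant of $\tilde f(\cdot,u)$ in $u$, one has $\tilde w_t-\tilde w_{xx}+L\tilde w\ge 0$, so the classical SMP with constant coefficient applies. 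One last minor point: the convergence $v_n\to\tilde v$ in $C^{1,2}_{\mathrm{loc}}$ should really be taken on the open slab $(-T_D,\infty)\times\R$, since $s_n-t_n$ may equal $-T_D$ and interior estimates degenerate at that edge; this costs nothing since $t=0$ is interior.
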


The notation $T_{D}$ stands for the time delay. We understand it as the time that the approximation solutions take to adjust their shapes. The proof of Theorem \ref{thm-non-flat} depends on the boundedness of interface width as in Theorem \ref{thm-bd-interface-width} and the propagation of the interface location $\xi_{\theta}(t;s)$ as in Proposition \ref{prop-bd-propagation}. To prove Theorem \ref{thm-non-flat}, we first prove a lemma.

\begin{lem}[\cite{Sh99-1}]\label{derivative-integral-estimate}
For any $h>0$, $t\geq t_{0}\geq s$, there holds
\begin{equation*}
u_{x}(t,x;s)\leq J(t-t_{0},|x-z|+h)\int_{z-h}^{z+h}u_{x}(t_{0},y;s)dy,
\end{equation*}
where
\begin{equation*}
J(t-t_{0},|x-z|+h)=e^{-\tilde{M}(t-t_{0})}\frac{1}{\sqrt{4\pi(t-t_{0})}}e^{-\frac{(|x-z|+h)^{2}}{4(t-t_{0})}}
\end{equation*}
for some $\tilde{M}>0$.
\begin{proof}
Let $\ep>0$. Set $v_{1}(t,x;s)=u(t,x+\ep;s)$ and $v_{2}(t,x;s)=u(t,x;s)$. By monotonicity, $v_{1}(t,x;s)<v_{2}(t,x;s)$. Clearly, $v(t,x;s)=v_{1}(t,x;s)-v_{2}(t,x;s)$ satisfies
\begin{equation*}
v_{t}=v_{xx}+f(t,v_{1})-f(t,v_{2}).
\end{equation*}
By (H1), there exists $\tilde{M}>0$ such that $f(t,v_{1})-f(t,v_{2})\leq-\tilde{M}(v_{1}-v_{2})$, and hence
\begin{equation*}
v_{t}\leq v_{xx}-\tilde{M}v.
\end{equation*}
By comparison principle, we obtain for $t\geq t_{0}\geq s$
\begin{equation*}
\begin{split}
&u(t,x+\ep;s)-u(t,x;s)\\
&\quad\quad=v(t,x;s)\\
&\quad\quad\leq e^{-\tilde{M}(t-t_{0})}\int_{\R}\frac{1}{\sqrt{4\pi(t-t_{0})}}e^{-\frac{(x-y)^{2}}{4(t-t_{0})}}[u(t_{0},y+\ep;s)-u(t_{0},y;s)]dy\\
&\quad\quad\leq e^{-\tilde{M}(t-t_{0})}\int_{z-h}^{z+h}\frac{1}{\sqrt{4\pi(t-t_{0})}}e^{-\frac{(x-y)^{2}}{4(t-t_{0})}}[u(t_{0},y+\ep;s)-u(t_{0},y;s)]dy\\
&\quad\quad\leq e^{-\tilde{M}(t-t_{0})}\frac{1}{\sqrt{4\pi(t-t_{0})}}e^{-\frac{(|x-z|+h)^{2}}{4(t-t_{0})}}\int_{z-h}^{z+h}[u(t_{0},y+\ep;s)-u(t_{0},y;s)]dy,
\end{split}
\end{equation*}
which leads to the result.
\end{proof}
\end{lem}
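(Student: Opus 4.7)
The plan is to derive the estimate by comparing a positive difference quotient against a subsolution of the heat equation, and then to exploit the fact that $u_x\leq 0$ to shrink the domain of integration and pull the Gaussian minimum out of the integral. First I would fix $\ep>0$ and set $v(t,x;s):=u(t,x+\ep;s)-u(t,x;s)$, which is strictly negative for all $t\geq s$ and $x\in\R$ by the spatial monotonicity established in Lemma~\ref{lem-basic-prop}. Subtracting the equations satisfied by the two translates, $v$ solves the linear equation $v_t=v_{xx}+[f(t,u(t,x+\ep;s))-f(t,u(t,x;s))]$. Using the local uniform Lipschitz continuity of $f(t,\cdot)$ from $\rm(H1)$, one has $f(t,v_1)-f(t,v_2)\leq -\tilde M(v_1-v_2)$ for some $\tilde M>0$ independent of $t$, whence $v_t-v_{xx}+\tilde M v\leq 0$.

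Next I would feed this differential inequality into the heat kernel. Setting $w:=e^{\tilde M t}v$, a direct computation shows $w_t-w_{xx}\leq 0$, so $w$ is a subsolution of the heat equation; combined with the bound $|v|\leq 1$, a standard comparison with the bounded solution of the heat equation starting from the same initial datum at time $t_0$ gives, for any $t\geq t_0\geq s$,
\[
v(t,x;s)\leq e^{-\tilde M(t-t_0)}\int_{\R}\frac{1}{\sqrt{4\pi(t-t_0)}}e^{-(x-y)^2/(4(t-t_0))}\,v(t_0,y;s)\,dy.
\]
Both sides are non-positive. Since the integrand on the right is non-positive on all of $\R$, deleting the complement of $[z-h,z+h]$ from the integration domain can only increase the integral, so the same inequality persists with $\int_\R$ replaced by $\int_{z-h}^{z+h}$.

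For $y\in[z-h,z+h]$ one has $|x-y|\leq |x-z|+h$, so the Gaussian is bounded below by $(4\pi(t-t_0))^{-1/2}e^{-(|x-z|+h)^2/(4(t-t_0))}=e^{\tilde M(t-t_0)}J(t-t_0,|x-z|+h)$. Because the remaining factor $v(t_0,y;s)$ is non-positive, multiplying this lower bound against it reverses the inequality, so the integrand is dominated by $e^{\tilde M(t-t_0)}J(t-t_0,|x-z|+h)\,v(t_0,y;s)$. Pulling the $y$-independent prefactor out of the integral and absorbing the $e^{-\tilde M(t-t_0)}$ already in front yields
\[
v(t,x;s)\leq J(t-t_0,|x-z|+h)\int_{z-h}^{z+h}v(t_0,y;s)\,dy.
\]
Finally, dividing through by $\ep>0$ and sending $\ep\to 0^+$ converts $v/\ep$ into $u_x$ on both sides; the passage inside the integral is legitimized by the uniform a priori bound on $u_x$ on compact sets (from standard parabolic regularity) together with dominated convergence.

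The only genuine subtlety is the careful bookkeeping of inequality directions: $v$ and $u_x$ are both non-positive, so each reduction (restricting the integration interval, then replacing $G$ by its minimum on that interval) preserves the desired direction precisely because we multiply a non-positive quantity by a smaller positive one. No individual step is deep, but an inverted sign would flip the inequality, so the reversals have to be spelled out explicitly.
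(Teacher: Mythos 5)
Your proposal is correct and follows essentially the same route as the paper: form the difference $v=u(t,x+\ep;s)-u(t,x;s)<0$, use the Lipschitz bound from (H1) to get $v_t\leq v_{xx}-\tilde M v$, compare with the heat kernel, restrict the integral to $[z-h,z+h]$ and bound the Gaussian below there (both steps valid because the integrand is non-positive), then divide by $\ep$ and let $\ep\to0^+$. The sign bookkeeping and the limiting step you spell out are exactly what the paper's proof leaves implicit.
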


Observe that $J(t-t_{0},|x-z|+h)\ra0$ as $t-t_{0}\ra0$, that is, the estimate given in Lemma \ref{derivative-integral-estimate} is degenerate when $t$ approaches $t_{0}$. This is the technical reason why we introduce the time delay $T_{D}$ in the statement of Theorem \ref{thm-non-flat}.

We now prove Theorem \ref{thm-non-flat}.

\begin{proof}[Proof of Theorem \ref{thm-non-flat}]
Set $h_{\theta}=\max\{C(\theta,\frac{1+\theta}{2}),C(\theta,\frac{\theta}{2})\}$, where $C(\theta,\frac{1+\theta}{2})$ and $C(\theta,\frac{\theta}{2})$ are as in Theorem \ref{thm-bd-interface-width}. Theorem \ref{thm-bd-interface-width} then ensures that for all $t_{0}\geq s$
\begin{equation}\label{an-inequality-aux-1234}
\xi_{\theta}(t_{0};s)+h_{\theta}\geq\xi_{\frac{\theta}{2}}(t_{0};s),\quad \xi_{\theta}(t_{0};s)-h_{\theta}\leq\xi_{\frac{1+\theta}{2}}(t_{0};s).
\end{equation}

Now, for any $\tau\geq0$ and $t_{0}\geq s$, applying Lemma \ref{derivative-integral-estimate} with $z=\xi_{\theta}(t_{0};s)$ and $h=h_{\theta}$, we obtain that if $|x-\xi_{\theta}(t_{0};s)|\leq M$, then
\begin{equation}\label{an-estimate-aux-1234}
\begin{split}
u_x(\tau+t_{0},x;s)&\leq J(\tau, M+h_{\theta})\int_{\xi_{\theta}(t_{0};s)-h_{\theta}}^{\xi_{\theta}(t_{0};s)+h_{\theta}}u_{x}(t_{0},y;s)dy\\
&=J(\tau, M+h_{\theta})\big[u(t_{0},\xi_{\theta}(t_{0};s)+h_{\theta};s)-u(t_{0},\xi_{\theta}(t_{0};s)-h_{\theta};s)\big]\\
&\leq J(\tau, M+h_{\theta})\big[u(t_{0},\xi_{\frac{\theta}{2}}(t_{0};s);s)-u(t_{0},\xi_{\frac{1+\theta}{2}}(t_{0};s);s)\big]\\
&=-\frac{1}{2}J(\tau,M+h_{\theta}),
\end{split}
\end{equation}
where we used \eqref{an-inequality-aux-1234} and monotonicity of $u(t_{0},x;s)$ in $x$ in the second inequality.

Finally, fix some $T_{0}$, where $T_{0}$ is as in Proposition \ref{prop-bd-propagation}. Setting $\tau=T_{0}$ in \eqref{an-estimate-aux-1234}, we find that if $|x-\xi_{\theta}(t_{0}+T_{0};s)|\leq M$, then
\begin{equation*}
|x-\xi_{\theta}(t_{0};s)|\leq|x-\xi_{\theta}(t_{0}+T_{0};s)|+|\xi_{\theta}(t_{0}+T_{0};s)-\xi_{\theta}(t_{0};s)|\leq M+h_{0}(T_{0},\de_{*})
\end{equation*}
by Proposition \ref{prop-bd-propagation}, and hence,
$$
u_x(t_{0}+T_{0},x;s)\leq-\frac{1}{2}J(T_{0},M+h_{0}(T_{0},\de_{*})+h_{\theta}).
$$
This completes the proof of the main result. For the ``in particular" part, we argue as follows.

$\rm(i)$ It is a simple consequence of the just-proven result and  Theorem \ref{thm-bd-interface-width}.

$\rm(ii)$ It follows from Lemma \ref{lem-negativity}, the uniform boundedness of $u_{t}(t,\xi_{\theta}(t;s);s)$ in $t\ge s+\delta_0$ for any
$\delta_0>0$ and $\rm(i)$.
\end{proof}




\section{Uniform Decaying Estimates}\label{uniform-estimate}

 In this section, we investigate the uniform-in-time estimates of $u(t,x+\xi_\theta(t;s),s)$ for $x\le 0$ (referred to as  behind the interface) and $x\ge 0$ (referred to as ahead of the interface). Throughout this section, we assume $\rm(H1)$ and $\rm(H2)$.

\subsection{Uniform Decaying Estimates Behind Interface}

In this subsection, we control $u(t,x;s)$ behind the interface. The main results of this subsection are stated in the following theorem.

\begin{thm}\label{thm-uniform-estimate-hehind}
\begin{itemize}
\item[\rm(i)] There is a strictly decreasing function $v:(-\infty,0]\ra[\theta,1)$ satisfying $v(-\infty)=1$ and $v(0)=\theta$ such that
\begin{equation*}
u(t,x+\xi_{\theta}(t;s);s)\geq v(x),\quad x\leq0
\end{equation*}
for all $s<0$, $t\geq s+T_{D}$, where $T_{D}$ is given in Theorem \ref{thm-non-flat}.

\item[\rm(ii)] There exist $\la_{0}\in(\theta,1)$, $r>0$ and $\beta_{0}>0$ such that
\begin{equation*}
u(t,x+\xi_{\theta}(t;s);s)\geq1-(1-\la_{0})\Big[e^{-\be_{0}(t-s)}+e^{r(x+C(\theta,\la_{0}))}\Big],\quad x\leq-C(\theta,\la_{0})
\end{equation*}
for all $s<0$, $t\geq s+T_{D}$, where $C(\theta,\la_{0})$ is as in Theorem \ref{thm-bd-interface-width}.
\end{itemize}
\end{thm}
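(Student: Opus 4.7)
The plan splits into the two parts. For (i), I would exploit monotonicity of $u(t,\cdot;s)$ together with the bounded interface width of Theorem~\ref{thm-bd-interface-width}. Since $\la>\theta$ and $u(t,\cdot;s)$ is strictly decreasing in $x$, one has $\xi_{\la}(t;s)\le\xi_{\theta}(t;s)$, and Theorem~\ref{thm-bd-interface-width} gives $\xi_{\theta}(t;s)-\xi_{\la}(t;s)\le C(\theta,\la)$. By monotonicity this yields $u(t,x+\xi_{\theta}(t;s);s)\ge\la$ whenever $x\le -C(\theta,\la)$, for all $s<0$ and $t\ge s+T_{D}$. Taking any sequence $\la_{n}\nearrow 1$ with widths $C_{n}=C(\theta,\la_{n})\to\infty$, the lower envelope
\[
\underline v(x)=\inf_{s<0,\,t\ge s+T_{D}}u(t,x+\xi_{\theta}(t;s);s)
\]
satisfies $\underline v(0)\ge\theta$ and $\underline v(x)\to 1$ as $x\to-\infty$. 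The desired $v$ is then any continuous strictly decreasing function $(-\infty,0]\to[\theta,1)$ with $v(0)=\theta$, $v(-\infty)=1$, and $v\le\underline v$, built by smoothly interpolating the staircase data $v(-C_{n})\le\la_{n}$.

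For (ii), I would run a subsolution argument in the moving frame $y=x-\xi_{\theta}(t;s)$. Setting $\tilde u(t,y)=u(t,y+\xi_{\theta}(t;s);s)$ and using Theorem~\ref{thm-non-flat}(ii) to fix a uniform bound $|\xi_{\theta}'(t;s)|\le c_{*}$ for $t\ge s+T_{D}$, we have
\[
\tilde u_{t}=\tilde u_{yy}-\xi_{\theta}'(t;s)\,\tilde u_{y}+f(t,\tilde u).
\]
Since $(f_{\inf})_{u}(1)<0$, (H2) supplies $\la_{1}\in(\theta,1)$ and $\mu>0$ with $f(t,u)\ge\mu(1-u)$ for $u\in[\la_{1},1]$. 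Choose $\la_{0}\in(\la_{1},1)$ with $2\la_{0}-1\ge\la_{1}$, put $C=C(\theta,\la_{0})$, and test the candidate
\[
\vp(t,y)=1-(1-\la_{0})\bigl[e^{-\be_{0}(t-s)}+e^{r(y+C)}\bigr],\qquad y\le -C,\ t\ge s+T_{D}.
\]
On this region $\vp\ge\la_{1}$, so $f(t,\vp)\ge\mu(1-\la_{0})[e^{-\be_{0}(t-s)}+e^{r(y+C)}]$. A direct computation reduces the subsolution inequality $\vp_{t}-\vp_{yy}+\xi_{\theta}'(t;s)\vp_{y}\le f(t,\vp)$ to
\[
\be_{0}\le\mu,\qquad r^{2}+c_{*}r\le\mu,
\]
which I arrange by picking $\be_{0}$ and $r$ small. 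On the lateral boundary $y=-C$, Theorem~\ref{thm-bd-interface-width} with monotonicity gives $\tilde u(t,-C)\ge\la_{0}\ge\vp(t,-C)$; on the initial slice $t=s+T_{D}$, part (i) together with choosing $\la_{0}$ close enough to $1$ (so that $C$ is large and hence $\underline v(y)$ is correspondingly close to $1$ on $y\le -C$) allows $\tilde u(s+T_{D},\cdot)\ge\vp(s+T_{D},\cdot)$. The comparison principle then delivers (ii).

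The main obstacle is the initial comparison at $t=s+T_{D}$ on the full half-line $y\le -C$: the interior subsolution inequality pushes $\la_{0}$ upward toward $1$ (to keep $\vp$ in the dissipation regime of $f$), while the initial inequality is easier when $\vp$ is not too close to $1$. These competing demands are reconciled by jointly tuning the small parameters $1-\la_{0}$, $r$, and $\be_{0}$, using that $\underline v(y)\to 1$ as $y\to-\infty$ from part (i); once threaded, the remainder is a routine linear-parabolic comparison.
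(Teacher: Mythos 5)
Your overall strategy in both parts is the same as the paper's (a lower ``staircase'' from the bounded-width theorem for (i), an explicit exponential subsolution in a moving frame for (ii)), but in each part the step you pass over quickly is exactly where the argument breaks as written. In (i), the existence of a strictly decreasing continuous $v\le\underline v$ with $v(0)=\theta$ does not follow from monotonicity and Theorem \ref{thm-bd-interface-width} alone: those give $\underline v\ge\theta$ on $(-\infty,0]$ and $\underline v(x)\ge\la_n$ only for $x\le -C_n$, with $C_1>0$, so on $(-C_1,0)$ you know nothing beyond $\underline v\ge\theta$. Since the infimum defining $\underline v$ runs over the non-compact family $s<0$, $t\ge s+T_D$, nothing you cite prevents $\underline v\equiv\theta$ on some interval $[-\de,0]$, and in that case no strictly decreasing $v$ with $v(0)=\theta$ and $v\le\underline v$ can exist (it would have to satisfy $v(x)>\theta=\underline v(x)$ for $x\in[-\de,0)$). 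What rules this out is the uniform steepness estimate, Theorem \ref{thm-non-flat}: $u_x(t,\cdot\,;s)\le-\al(M)$ on $[\xi_\theta(t;s)-M,\xi_\theta(t;s)+M]$ for $t\ge s+T_D$ gives $\underline v(x)\ge\theta+\al(|x|)\,|x|>\theta$ for $x<0$, after which your interpolation goes through. This missing ingredient is precisely where the paper spends its effort: it shows $M_\la=\sup_{s,t}[\xi_\theta(t;s)-\xi_\la(t;s)]$ is continuous and strictly increasing (continuity near $\theta$ uses Theorem \ref{thm-non-flat}) and takes $v$ to be the inverse of $-M_\la$.

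In (ii), the interior computation and the lateral boundary at $y=-C$ are fine, but the initial comparison at $t=s+T_D$ --- which you flag as the main obstacle --- is not closed by your tuning. For $y$ far to the left the requirement $\tilde u(s+T_D,y)\ge\vp(s+T_D,y)$ amounts to $1-\tilde u(s+T_D,y)\le(1-\la_0)e^{-\be_0T_D}$, i.e.\ closeness to $1$ of order $1-\la_0$ at distance about $C(\theta,\la_0)$ behind the front. Part (i) carries no rate, and letting $\la_0\ra1$ is circular: the required level $1-(1-\la_0)e^{-\be_0T_D}$ tightens at the same rate $1-\la_0$, while the distance at which the width theorem guarantees that level is $C\big(\theta,\,1-(1-\la_0)e^{-\be_0T_D}\big)$, which is not controlled relative to $C(\theta,\la_0)$. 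The correct device --- in effect what the paper's Lemma \ref{lem-hehind-key} does through the comparison $\bar v\ge(\la_0-1)e^{-\be_0(t-\cdot)}$ with initial value $\la_0-1$ --- is to normalize the time-decaying term to equal $1$ at the starting time, i.e.\ use $e^{-\be_0(t-s-T_D)}$ in place of $e^{-\be_0(t-s)}$. Then $\vp(s+T_D,y)\le1-(1-\la_0)=\la_0\le\tilde u(s+T_D,y)$ on $y\le-C$ by Theorem \ref{thm-bd-interface-width} alone, no appeal to (i) or to $\la_0\ra1$ is needed, and the resulting estimate differs from the stated one only by the harmless factor $e^{\be_0T_D}$. (A small point: the frame equation should read $\tilde u_t=\tilde u_{yy}+\xi_\theta'\tilde u_y+f(t,\tilde u)$; the sign is immaterial since you only use $|\xi_\theta'|\le c_*$.)
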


The first part of the theorem gives an uniform control of $u(t,x;s)$ behind the interface. The second part gives an exponential property of $1-u(t,x;s)$ behind the interface, which leads to the exponential decay behind the interface of the limiting function $1-u(t,x;s)$ as $s\ra-\infty$.

To  prove Theorem \ref{thm-uniform-estimate-hehind}, we first prove a  lemma giving the exponential property of $u(t,x;s)$ behind some special interface.

\begin{lem}\label{lem-hehind-key}
There exists $\la_{0}\in(\theta,1)$, $r>0$ and $\beta_{0}>0$ such that
\begin{equation*}
u(t,x+\xi_{\la_{0}}(t;s);s)\geq1-(1-\la_{0})\Big[e^{-\be_{0}(t-s)}+e^{rx}\Big],\quad x\leq0
\end{equation*}
for all $s<0$, $t\geq s+T_{D}$
\end{lem}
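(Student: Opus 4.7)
My plan is to construct a super-solution for $v := 1-u$ on the moving half-plane $D = \{(t,x) : t > s,\ x < \xi_{\la_0}(t;s)\}$ and invoke parabolic comparison over $D \cap \{t \ge s + T_D\}$. Using $(f_{\inf})_u(1) < 0$ from $\rm(H2)$ and Taylor expansion of $f_{\inf}$ at $u=1$, I first fix $\la_0 \in (\theta, 1)$ close enough to $1$ and $\mu > 0$ so that
\begin{equation*}
f(t,u) \ge f_{\inf}(u) \ge \mu(1-u), \quad u \in [\la_0, 1],\ t \in \R.
\end{equation*}
The spatial monotonicity of Lemma \ref{lem-basic-prop} gives $u(t,x;s) \ge \la_0$ on $D$, so $v$ satisfies the linear inequality $v_t \le v_{xx} - \mu v$ on $D$.

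The ansatz is $\bar v(t,x) = (1-\la_0)\big[e^{-\be_0(t-s)} + e^{r(x-\xi_{\la_0}(t;s))}\big]$ with $\be_0, r > 0$ small. A direct computation gives
\begin{equation*}
\bar v_t - \bar v_{xx} + \mu \bar v = (1-\la_0)(\mu - \be_0)\,e^{-\be_0(t-s)} + (1-\la_0)\big(\mu - r^2 - r\,\xi_{\la_0}'(t;s)\big)\, e^{r(x-\xi_{\la_0}(t;s))}.
\end{equation*}
Theorem \ref{thm-non-flat}$\rm(ii)$ furnishes $|\xi_{\la_0}'(t;s)| \le C_{\la_0}$ for $t \ge s+T_D$, so I select $\be_0 \in (0, \mu]$ and $r > 0$ with $r^2 + r C_{\la_0} \le \mu$, making $\bar v$ a super-solution of $v_t = v_{xx} - \mu v$ on $D \cap \{t \ge s + T_D\}$. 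The lateral condition is automatic: on $x = \xi_{\la_0}(t;s)$, $v = 1-\la_0 \le (1-\la_0)(1 + e^{-\be_0(t-s)}) = \bar v$.

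The principal difficulty is checking the initial inequality $v(s+T_D, x; s) \le \bar v(s+T_D, x)$ for all $x \le \xi_{\la_0}(s+T_D; s)$, which I address by splitting into two ranges. When $x - \xi_{\la_0}(s+T_D;s) \ge \tfrac{1}{r}\log(1 - e^{-\be_0 T_D})$ one has $\bar v \ge 1-\la_0$, so the monotonicity bound $v \le 1-\la_0$ suffices. For $x$ further behind, I combine the traveling-wave estimate $u(s+T_D, x; s) \ge \phi(x - x_s - c_{\inf} T_D)$ from Lemma \ref{lem-entire-sol}, the linearization of \eqref{traveling-homo} at $\phi = 1$ (which yields $1 - \phi(z) \le \tilde C e^{k z}$ globally with $k > 0$ determined by $f_{\inf}'(1) < 0$), and the $s$-uniform upper bound $\xi_{\la_0}(s+T_D;s) - x_s \le L$ coming from the sup-solution $e^{-c_{\inf}(x - x_s - y_0 - c(t-s))}$ in Lemma \ref{lem-entire-sol}. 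These give $v(s+T_D, x; s) \le \tilde C e^{kL}\, e^{k(x - \xi_{\la_0}(s+T_D;s))}$, which is dominated by $(1-\la_0) e^{r(x - \xi_{\la_0}(s+T_D;s))} \le \bar v(s+T_D, x)$ once $x - \xi_{\la_0}(s+T_D;s)$ is sufficiently negative. Choosing $r < k$ first and then $\be_0$ small enough—so that $\tfrac{1}{r}\log(1-e^{-\be_0 T_D})$ is very negative—ensures the two ranges jointly cover $(-\infty, \xi_{\la_0}(s+T_D;s)]$. Parabolic comparison on the curvilinear domain then yields $v \le \bar v$ on $D \cap \{t \ge s+T_D\}$, which is the claimed inequality after substituting $y = x - \xi_{\la_0}(t;s)$.
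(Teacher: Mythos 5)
Your proposal is correct, and it proves the lemma with the same barrier as the paper but arrives at it by a different mechanism. The paper also works behind the $\la_{0}$-level set with $f(t,u)\geq\be_{0}(1-u)$ and the speed bound $|\xi_{\la_{0}}'|\leq C_{\la_{0}}$ from Theorem \ref{thm-non-flat}(ii), but it first replaces $u$ by the solution $\hat v$ of the linearized moving-boundary problem, subtracts the exact steady profile $\tilde v(x)=1-(1-\la_{0})e^{rx}$ (with $r$ the positive root of $r^2+C_{\la_0}r=\be_0$... i.e. determined by the constant-drift ODE), and then bounds the remainder $\bar v=\hat v-\tilde v$ from below by the purely time-dependent function $(\la_{0}-1)e^{-\be_{0}(t-s)}$; the initial-time inequality there is trivial because only $\bar v\geq\la_{0}-1$, i.e. $u\geq\la_{0}$ behind the interface, is needed. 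You instead compare $1-u$ directly with the explicit supersolution $(1-\la_{0})\big[e^{-\be_{0}(t-s)}+e^{r(x-\xi_{\la_{0}}(t;s))}\big]$ on the curvilinear domain, which forces you to verify the initial inequality at $t=s+T_{D}$ nontrivially; your two-range argument does this correctly, at the price of two extra (but standard and available) ingredients: the exponential approach of the homogeneous ignition profile $\phi$ to $1$ as $z\ra-\infty$ (legitimate by $\rm(H2)$, since $(f_{\inf})_{u}(1)<0$ and $f_{\inf}\in C^1$ near $1$, via the classical ODE analysis of \eqref{traveling-homo}), and the $s$-uniform bound $\xi_{\la_{0}}(s+T_{D};s)-x_{s}\leq L$ from the exponential sup-solution of Lemma \ref{lem-entire-sol}; your order of choices (fix $\la_{0},\mu$, then $r$ small with $r<k$ and $r^{2}+rC_{\la_{0}}\leq\mu$, then $\be_{0}$ small so that $\tfrac1r\log(1-e^{-\be_{0}T_{D}})$ lies below the range-2 threshold) is uniform in $s$, as required. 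In short, the paper's decomposition buys a trivial initial-data check using only $u\geq\la_{0}$, while your direct barrier is more self-contained and explicit but leans on quantitative decay of $1-\phi$; both yield the stated estimate.
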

\begin{proof}
By $\rm(H2)$, there exist $\la_{0}\in(\theta,1)$ and $\be_{0}>0$ such that
\begin{equation}\label{condition-lower-estimate}
f(t,u)\geq\be_{0}(1-u),\quad u\in[\la_{0},1].
\end{equation}
Let $v(t,x;s)=u(t,x+\xi_{\la_{0}}(t;s);s)$. It solves
\begin{equation*}
\begin{split}
\left\{\begin{aligned}
&v_{t}=v_{xx}+\xi_{\la_{0}}'v_{x}+f(t,v),\,\,x\leq0,\,\,t\geq s+T_{D}\\
&v(t,-\infty;s)=1,\,\,v(t,0;s)=\la_{0},\,\,t\geq s+T_{D}\\
&v(s+T_{D},x;s)=u(s+T_{D},x+\xi_{\la_{0}}(s+T_{D};s);s),\,\,x\leq0
\end{aligned} \right.
\end{split}
\end{equation*}
where $\xi_{\la_{0}}'=\frac{d}{dt}\xi_{\la_{0}}(t;s)$. Since $v(t,x;s)\in[\la_{0},1)$ for $x\leq0$ and $t\geq s+T_{D}$, we conclude from \eqref{condition-lower-estimate} that
\begin{equation}\label{estimate-hehind-1}
v(t,x;s)\geq\hat{v}(t,x;s),\quad x\leq0,\,\,t\geq s+T_{D},
\end{equation}
where $\hat{v}(t,x;s)$ is the solution of
\begin{equation*}
\begin{split}
\left\{\begin{aligned}
&\hat{v}_{t}=\hat{v}_{xx}+\xi_{\la_{0}}'\hat{v}_{x}+\beta_{0}(1-\hat{v}),\,\,x\leq0,\,\,t\geq s+T_{D}\\
&\hat{v}(t,-\infty;s)=1,\,\,\hat{v}(t,0;s)=\la_{0},\,\,t\geq s+T_{D}\\
&\hat{v}(s+T_{D},x;s)=u(s+T_{D},x+\xi_{\la_{0}}(s+T_{D};s);s),\,\,x\leq0.
\end{aligned} \right.
\end{split}
\end{equation*}

Let $C_{\la_{0}}$ be as in Theorem \ref{thm-non-flat} $\rm(ii)$ and $\tilde{v}(x)$, $x\leq0$ be the solution of
\begin{equation*}
\begin{split}
\left\{\begin{aligned}
&\tilde{v}_{xx}+C_{\la_{0}}\tilde{v}_{x}+\beta_{0}(1-\tilde{v})=0,\,\,x\leq0\\
&\tilde{v}(-\infty)=1,\,\,\tilde{v}(0)=\la_{0}.
\end{aligned} \right.
\end{split}
\end{equation*}
The above problem is explicitly solvable, and we readily compute
\begin{equation}\label{estimate-hehind-2}
\tilde{v}(x)=1-(1-\la_{0})e^{rx},\quad x\leq0,
\end{equation}
where $r=\frac{-C_{\la_{0}}+\sqrt{C_{\la_{0}}^{2}+4\beta_{0}}}{2}>0$. Setting
\begin{equation}\label{estimate-hehind-3}
\bar{v}(t,x;s)=\hat{v}(t,x;s)-\tilde{v}(x),\,\,x\leq0,\,\,t\geq s+T_{D}
\end{equation}
we easily check that $\bar{v}(t,x;s)$ satisfies
\begin{equation*}
\begin{split}
\left\{\begin{aligned}
&\bar{v}_{t}\geq\bar{v}_{xx}+\xi_{\la_{0}}'\bar{v}_{x}-\beta_{0}\bar{v},\,\,x\leq0,\,\,t\geq s+T_{D}\\
&\bar{v}(t,0;s)=0,\,\,\bar{v}(t,-\infty;s)=0,\,\,t\geq s+T_{D}\\
&\bar{v}(s+T_{D},x;s)=u(s+T_{D},x+\xi_{\la_{0}}(s+T_{D};s);s)-\tilde{v}(x),\,\,x\leq0.
\end{aligned} \right.
\end{split}
\end{equation*}
Since clearly $\bar{v}(s,x;s)\geq\la_{0}-1$, we obtain that
\begin{equation}\label{estimate-hehind-4}
\bar{v}(t,x;s)\geq(\la_{0}-1)e^{-\be_{0}(t-s)},\quad x\leq0,\,\,t\geq s+T_{D}
\end{equation}
where $(\la_{0}-1)e^{-\be_{0}(t-s)}$ is a space-independent solution of $\bar{v}_{t}=\bar{v}_{xx}+\xi_{\la_{0}}'\bar{v}_{x}-\beta_{0}\bar{v}$. The result then follows from \eqref{estimate-hehind-1}, \eqref{estimate-hehind-2}, \eqref{estimate-hehind-3} and \eqref{estimate-hehind-4}.
\end{proof}

We now prove Theorem \ref{thm-uniform-estimate-hehind}.

\begin{proof}[Proof of Theorem \ref{thm-uniform-estimate-hehind}]
$\rm(i)$ For $\la\in[\theta,1)$, we define
\begin{equation*}
M_{\la}=\sup_{s<0,t\geq s+T_{D}}\big[\xi_{\theta}(t;s)-\xi_{\la}(t;s)\big].
\end{equation*}
Clearly, $M_{\la}\leq C(\theta,\la)$ by Theorem \ref{thm-bd-interface-width} with the understanding $C(\theta,\theta)=0$, $M_{\la}\ra\infty$ as $\la\ra1$ by just looking at $u(t_{0},x;s_{0})$ for some $s_{0}<0$ and $t_{0}\geq s_{0}+T_{D}$, and the map $\la\mapsto M_{\la}:[\theta,1)\ra[0,\infty)$ is nondecreasing. We show that $\la\mapsto M_{\la}:[\theta,1)\ra[0,\infty)$ is strictly increasing and continuous.

We show that $\la\mapsto M_{\la}:[\theta,1)\ra[0,\infty)$ is strictly increasing. Fix any $\la_{0}\in[\theta,1)$ and let $\{s_{n}+T_{D}\leq t_{n}\}_{n\in\N}$ be such that $\lim_{n\ra\infty}\xi_{\theta}(t_{n};s_{n})-\xi_{\la_{0}}(t_{n};s_{n})=M_{\la_{0}}$. Since $\inf_{s<0,t\geq s, x\in\R}u_{x}(t,x;s)\geq-C^{*}$ for some $C^{*}>0$ by a priori estimates parabolic equations, we find
\begin{equation}\label{estimate-upper-bd-aux}
\begin{split}
u(t_{n},x+\xi_{\theta}(t_{n};s_{n});s_{n})\leq\min\Big\{-C^{*}\big[x-(\xi_{\la_{0}}(t_{n};s_{n})-\xi_{\theta}(t_{n};s_{n}))\big]+\la_{0},1\Big\}
\end{split}
\end{equation}
for all $x\in(-\infty,\xi_{\la_{0}}(t_{n};s_{n})-\xi_{\theta}(t_{n};s_{n})]$ and all $n\in\N$.

Now, let $\la_{1}\in(\la_{0},1)$. Using \eqref{estimate-upper-bd-aux} and $\lim_{n\ra\infty}\xi_{\theta}(t_{n};s_{n})-\xi_{\la_{0}}(t_{n};s_{n})=M_{\la_{0}}$, we can find an $N$ sufficiently large such that
$\xi_{\la_{1}}(t_{N};s_{N})-\xi_{\theta}(t_{N};s_{N})\leq x_{N}$, where $x_{N}<-M_{\la_{0}}$ is such that $-(C^{*}+1)(x_{N}+M_{\la_{0}})+\la_{0}=\la_{1}$. It then follows
\begin{equation*}
M_{\la_{1}}\geq\xi_{\theta}(t_{N};s_{N})-\xi_{\la_{1}}(t_{N};s_{N})\geq-x_{N}>M_{\la_{0}}.
\end{equation*}

We show that $\la\mapsto M_{\la}:[\theta,1)\ra[0,\infty)$ is continuous. Fix any $\la_{0}\in[\theta,1)$ and let $\ep_{0}>0$ be small. By Theorem \ref{thm-non-flat}, there is $\al_{0}>0$ such that
\begin{equation*}
\sup_{s<0,t\geq s+T_{D}\atop x\in[\xi_{\la_{0}}(t;s)-\xi_{\theta}(t;s)-\ep_{0},\xi_{\la_{0}}(t;s)-\xi_{\theta}(t;s)+\ep_{0}]}u_{x}(t,x+\xi_{\theta}(t;s);s)\leq-\al_{0}.
\end{equation*}
It follows that for all $s<0$, $t\geq s+T_{D}$
\begin{equation}\label{continuity-right}
u(t,x+\xi_{\theta}(t;s);s)\geq-\al_{0}\big[x-(\xi_{\la_{0}}(t;s)-\xi_{\theta}(t;s))\big]+\la_{0}
\end{equation}
for $x\in[\xi_{\la_{0}}(t;s)-\xi_{\theta}(t;s)-\ep_{0},\xi_{\la_{0}}(t;s)-\xi_{\theta}(t;s)]$, and
\begin{equation}\label{continuity-left}
u(t,x+\xi_{\theta}(t;s);s)\leq-\al_{0}\big[x-(\xi_{\la_{0}}(t;s)-\xi_{\theta}(t;s))\big]+\la_{0}
\end{equation}
for $x\in[\xi_{\la_{0}}(t;s)-\xi_{\theta}(t;s),\xi_{\la_{0}}(t;s)-\xi_{\theta}(t;s)+\ep_{0}]$.

Then, comparing \eqref{continuity-right} with the segment $-\al_{0}(x+M_{\la_{0}})+\la_{0}$ for $x\in[-M_{\la_{0}}-\ep_{0},-M_{\la_{0}}]$, we obtain for any $\la\in(\la_{0},\al_{0}\ep_{0}+\la_{0}]$, $\xi_{\la}(t;s)-\xi_{\theta}(t;s)\geq-M_{\la_{0}}-\frac{\la-\la_{0}}{\al_{0}}$ for all $s<0$, $t\geq s+T_{D}$, which together with the fact that $\la\mapsto M_{\la}:[\theta,1)\ra[0,\infty)$ is strictly increasing implies
\begin{equation*}
M_{\la_{0}}<M_{\la}\leq M_{\la_{0}}+\frac{\la-\la_{0}}{\al_{0}}\ra M_{\la_{0}}\,\,\text{as}\,\,\la\ra\la_{0}^{+}.
\end{equation*}
This show the right continuity at $\la_{0}$.

For the left continuity, for any $\la\in[\la_{0}-\frac{1}{2}\al_{0}\ep_{0},\la_{0})$, we pick a sequence $\{s_{n}+T_{D}\leq t_{n}\}_{n\in\N}$ such that
$$
\lim_{n\ra\infty}\xi_{\theta}(t_{n};s_{n})-\xi_{\la_{0}}(t_{n};s_{n})=M_{\la_{0}}.
$$
Then, comparing \eqref{continuity-left} with the segment $-\frac{1}{2}\al_{0}(x+M_{\la_{0}})+\la_{0}$ for $x\in[-M_{\la_{0}},-M_{\la_{0}}+\ep_{0}]$, we can find an $N$ sufficiently large such that
\begin{equation*}
\xi_{\la}(t_{N};s_{N})-\xi_{\theta}(t_{N};s_{N})\leq-M_{\la_{0}}-\frac{2(\la-\la_{0})}{\al_{0}},
\end{equation*}
which implies $M_{\la}\geq M_{\la_{0}}+\frac{2(\la-\la_{0})}{\al_{0}}$, and then the left continuity at $\la_{0}$.

So far, we have shown that  $\la\mapsto M_{\la}:[\theta,1)\ra[0,\infty)$ is strictly increasing, continuous, and satisfies $M_{\theta}=0$ and $M_{\la}\ra+\infty$ as $\la\ra1$. We now define $v:(-\infty,0]\ra[\theta,1)$ as the inverse function of $\la\mapsto-M_{\la}$. It is easily verified that this $v$ satisfies all required properties as in the statement.

$\rm(ii)$ By  Lemma \ref{lem-hehind-key}$\rm(ii)$, we have
\begin{equation*}
u(t,x+\xi_{\la_{0}}(t;s);s)\geq1-(1-\la_{0})\Big[e^{-\be_{0}(t-s)}+e^{r[x-(\xi_{\la_{0}}(t;s)-\xi_{\theta}(t;s))]}\Big]
\end{equation*}
for $x\leq\xi_{\la_{0}}(t;s)-\xi_{\theta}(t;s)$. Since $\xi_{\la_{0}}(t;s)-\xi_{\theta}(t;s)\geq- C(\theta,\la_{0})$ by Theorem \ref{thm-bd-interface-width}, we arrive at the result.
\end{proof}

\subsection{Uniform Decaying Estimates Ahead of Interface}\label{subsec-estimate-ahead-interface}

In this subsection, we control $u(t,x;s)$ ahead of the interface. The main result of this subsection is stated in the following theorem.

\begin{thm}\label{thm-decaying-2}
There exist $\hat{T}_{D}>0$ and $c>0$ such that
\begin{equation*}
u(t,x+\xi_{\theta}(t;s);s)\leq\theta e^{-cx},\quad x\geq0
\end{equation*}
for all $s<0$ and $t\geq s+\hat{T}_{D}$.
\end{thm}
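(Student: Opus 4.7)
The plan is to work in the moving frame by setting $v(t,y;s):=u(t,y+\xi_\theta(t;s);s)$ for $y\ge 0$. Since $u_x<0$ and $v(t,0;s)=\theta$, monotonicity gives $v(t,y;s)\le\theta$ for $y\ge 0$, whence $f(t,v)=0$ there, and $v$ satisfies the linear drift-heat equation
\begin{equation*}
v_t=v_{yy}+\xi_\theta'(t;s)\,v_y,\quad y>0,\qquad v(t,0;s)=\theta.
\end{equation*}
A direct comparison against the naive barrier $\bar v(y)=\theta e^{-cy}$ fails, since this would require $\xi_\theta'(t;s)\ge c$ for all $t$, whereas $\xi_\theta'(t;s)$ oscillates in sign. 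The idea is therefore to combine two complementary bounds and optimize the match at the boundary $y=0$.

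First I would extract a uniform KPP-type envelope. Fix $\kappa\in(0,c_{\inf}^2]$ and recall $\xi(t;s)$ from \eqref{new-interface}, for which $u(t,x;s)\le e^{-\lambda_\kappa(x-\xi(t;s))}$ for every $x\in\R$. Lemma \ref{lem-bd-width} together with Theorem \ref{thm-bd-interface-width} provides a universal constant $D>0$ such that $|\xi_\theta(t;s)-\xi(t;s)|\le D$ for all $s<0$, $t\ge s$, and hence
\begin{equation*}
v(t,y;s)\le K\,e^{-\lambda_\kappa y},\quad y\in\R,\qquad K:=e^{\lambda_\kappa D}.
\end{equation*}
Next, for any $M>0$, the uniform steepness of Theorem \ref{thm-non-flat} gives $u_x(t,x;s)\le-\alpha(M)$ on $|x-\xi_\theta(t;s)|\le M$ for $t\ge s+T_D$, and integrating from $0$ to $y$ yields
\begin{equation*}
v(t,y;s)\le \theta-\alpha(M)\,y,\quad y\in[0,M].
\end{equation*}

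The core of the argument is to patch these two bounds. For $c\in(0,\lambda_\kappa)$, set $y_c:=\ln(K/\theta)/(\lambda_\kappa-c)$; a direct computation shows $Ke^{-\lambda_\kappa y_c}=\theta e^{-cy_c}$ and $Ke^{-\lambda_\kappa y}\le\theta e^{-cy}$ for all $y\ge y_c$, so the KPP envelope already delivers the desired bound on $[y_c,\infty)$. On $[0,y_c]$, taking $M=y_c$ in the steepness bound together with the elementary inequality $1-ay\le e^{-cy}$ for $y\ge 0$ (valid whenever $a\ge c$, by convexity and comparison of derivatives at $0$) yields the desired bound provided $c\le\alpha(y_c)/\theta$. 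Since $\alpha$ is continuous and nonincreasing on $[0,\infty)$ and $y_c$ depends continuously on $c\in[0,\lambda_\kappa)$, with $y_0=D-\ln\theta/\lambda_\kappa$ finite and $\alpha(y_0)/\theta$ strictly positive, this compatibility inequality holds for all sufficiently small $c>0$. Fixing any such $c$ and setting $\hat T_D:=T_D$ finishes the proof.

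The main obstacle is exactly the oscillation of $\xi_\theta'(t;s)$, which prevents any monotone, single-barrier super-solution argument of the form used in the time-monotone setting of \cite{NoRy09,MeRoSi10}. The two-scale comparison must reconcile a KPP-type exponential with prefactor $K>\theta$ (correct rate, wrong height) against the steepness bound near the interface (correct height $\theta$, only linear decay). The trade-off between the cross-over point $y_c$ and the requirement $c\le\alpha(y_c)/\theta$ is what forces the decay rate $c$ to be chosen small, but universal.
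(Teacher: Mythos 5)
Your argument is correct, and it reaches Theorem \ref{thm-decaying-2} by a genuinely shorter route than the paper. The paper first produces an exponential bound ahead of the interface with an enlarged prefactor $\hat\theta$ via a sliding/iteration scheme: Lemma \ref{lem-propogation} (average rightward motion of $\xi_{\theta}$), Lemma \ref{lem-sliding} (if the bound $\theta e^{-cx}$ holds at some time, it recurs after a time lying in $[c_{0},C_{0}]$, by comparison with $\theta_{*}e^{-c(x-\xi_{\theta}(t_{*};s)-c(t-t_{*}))}$ on the region where $f=0$), and Lemma \ref{lem-sliding-initial} (an initial time at which the bound holds), followed by tracking the loss over each iteration window. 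You instead obtain the enlarged-prefactor bound in one line from the auxiliary interface \eqref{new-interface}: by definition (and since the infimum there is attained, by continuity, or after replacing $\xi(t;s)$ by $\xi(t;s)+\ep$ and absorbing $\ep$ into the constant) one has $u(t,x;s)\leq e^{-\la_{\ka}(x-\xi(t;s))}$ for all $x$, and Lemma \ref{lem-bd-width} combined with Proposition \ref{prop-bd-interface-width} gives $\xi(t;s)-\xi_{\theta}(t;s)\leq D$ uniformly in $s<0$, $t\geq s$, hence $u(t,x+\xi_{\theta}(t;s);s)\leq e^{\la_{\ka}D}e^{-\la_{\ka}x}$ for $x\in\R$. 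The final step, trading the too-large prefactor for a smaller decay rate by means of the uniform steepness of Theorem \ref{thm-non-flat} near the interface, is essentially the same patching the paper performs at the end of its own proof, and your continuity argument in $c$ for the compatibility $c\leq\al(y_{c})/\theta$ is sound. Two small points should be made explicit: the $\ka$ in \eqref{new-interface} must be the one fixed in (the proof of) Lemma \ref{lem-bd-width}, i.e. $\ka\in(0,c_{\inf}^{2}]$ with $c^{*}_{\ka}<c_{B}$, so that that lemma applies; and since Lemma \ref{lem-bd-width} only covers $\la\in(\theta,\la_{*}]$, the passage to $\xi_{\theta}$ goes through $\xi_{\la_{*}}$ and Proposition \ref{prop-bd-interface-width}, as you indicate. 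There is no circularity, since Sections 3 and 4 are independent of Theorem \ref{thm-decaying-2}. What the paper's longer route buys is the sliding machinery itself (Lemmas \ref{lem-propogation}--\ref{lem-sliding-initial}), which controls first hitting times of the oscillating interface and is of independent use; your route buys brevity, yields $\hat{T}_{D}=T_{D}$ directly, and makes transparent that the only inputs are the bounded interface width of Section 3 and the uniform steepness of Section 4.
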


This theorem says that $u(t,x;s)$ decays from the interface with a uniform decaying rate.  It actually contains much more information than it looks like. For example, since $u(t,\xi_{\theta}(t;s);s)=\theta$, Theorem \ref{thm-decaying-2} then implies $u_{x}(t,\xi_{\theta}(t;s);s)\leq-c\theta$, although we have obtained this information in Theorem \ref{thm-non-flat}.

To prove Theorem \ref{thm-decaying-2}, we first prove several lemmas. The first one concerns the rightward propagation of $\xi_{\theta}(t;s)$.

\begin{lem}\label{lem-propogation}
There exist $T_{*}>0$ and $h_{*}>0$ such that
\begin{equation*}
\xi_{\theta}(t+T_{*};s)-\xi_{\theta}(t;s)\geq h_{*}
\end{equation*}
for all $s<0$, $t\geq s+T_{D}$.
\end{lem}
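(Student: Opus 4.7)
The plan is to deduce the statement for the $\theta$-interface from the propagation estimate already established for $\lambda$-interfaces with $\lambda>\theta$ (Lemma \ref{lem-rightward-prop-above-temp}), paying a bounded correction supplied by Theorem \ref{thm-bd-interface-width}. The reason one cannot simply run the argument of Lemma \ref{lem-rightward-prop-above-temp} directly at $\lambda=\theta$ is that $f(t,\theta)=0$, so the bistable subsolution built from $\phi_B$ of \eqref{eqn-bi-1}, whose natural height is $\ge\theta$ (by \eqref{bistable-nonlinearity}), only carries information about points where the solution exceeds $\theta$. The uniform bounded width of the interface lets one convert this into uniform rightward motion at the exact level $\theta$.

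Concretely, I would fix once and for all some $\lambda_{0}\in(\theta,1)$, for instance $\lambda_{0}=\tfrac{1+\theta}{2}$. By the strict monotonicity of $u(t,\cdot;s)$ from Lemma \ref{lem-basic-prop}, $\xi_{\lambda_{0}}(t;s)\le\xi_{\theta}(t;s)$, and Theorem \ref{thm-bd-interface-width} provides a constant $C(\theta,\lambda_{0})>0$ with
\[
0\le\xi_{\theta}(t;s)-\xi_{\lambda_{0}}(t;s)\le C(\theta,\lambda_{0})
\]
uniformly in $s<0$ and $t\ge s$. Next, apply Lemma \ref{lem-rightward-prop-above-temp} with, say, $\epsilon=c_{B}/2$, obtaining a constant $t_{\epsilon,\lambda_{0}}>0$ such that for every $T_{*}\ge 0$ and every $s<0$, $t\ge s$,
\[
\xi_{\lambda_{0}}(t+T_{*};s)-\xi_{\lambda_{0}}(t;s)\ge (c_{B}-\epsilon)\bigl(T_{*}-t_{\epsilon,\lambda_{0}}\bigr).
\]
Telescoping gives
\[
\xi_{\theta}(t+T_{*};s)-\xi_{\theta}(t;s)\ge 0+(c_{B}-\epsilon)\bigl(T_{*}-t_{\epsilon,\lambda_{0}}\bigr)-C(\theta,\lambda_{0}),
\]
and I would then select $T_{*}$ so large that the right-hand side equals some strictly positive number $h_{*}$; this is possible since $c_{B}-\epsilon=c_{B}/2>0$ and the additive loss $C(\theta,\lambda_{0})+(c_{B}-\epsilon)t_{\epsilon,\lambda_{0}}$ is a fixed constant.

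The main (and essentially the only) obstacle is the uniformity in $s$ and in the base time $t$. That obstacle has already been overcome in Sections \ref{sec-bd-interface-width} and in Lemma \ref{lem-rightward-prop-above-temp}, where all constants are independent of $s$ and of $t$. No further parabolic argument is required for this lemma; the content is purely the algebraic telescoping above together with a large enough choice of $T_{*}$.
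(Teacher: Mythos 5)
Your proof is correct and matches the paper's approach exactly: the paper's one-line proof cites precisely Theorem \ref{thm-bd-interface-width} and Lemma \ref{lem-rightward-prop-above-temp}, and your telescoping of $\xi_\theta(t+T_*;s)-\xi_\theta(t;s)$ through $\xi_{\lambda_0}$ with a bounded-width correction is the intended argument, merely spelled out in full.
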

\begin{proof}
The lemma follows from Theorem \ref{thm-bd-interface-width} and Lemma \ref{lem-rightward-prop-above-temp}.
\end{proof}

The next lemma is the driving force for the so-called sliding method (see \cite{BeNi91}).
\begin{lem}\label{lem-sliding}
Let $c\in(0,\min\{c_{\inf},\frac{h_{*}}{T_{*}}\})$ and $s<0$, where $T_{*}$ and $h_{*}$ are as in Lemma \ref{lem-propogation}. Suppose there is $t_{*}\geq s+T_{D}$ such that $u(t_{*},x+\xi_{\theta}(t_{*};s);s)\leq\theta e^{-cx}$ for $x\geq0$. Then, there exists $T(t_{*})\in(t_{*},\infty)$ such that
\begin{equation*}
u(T(t_{*}),x+\xi_{\theta}(T(t_{*});s);s)\leq\theta e^{-cx},\quad x\geq0.
\end{equation*}
Moreover, there are constants $0<c_{0}<C_{0}$ (independent of $s$ and $t_{*}$) such that
\begin{equation*}
c_{0}\leq T(t_{*})-t_{*}\leq C_{0}.
\end{equation*}
\end{lem}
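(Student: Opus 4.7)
The strategy is to exhibit an explicit super-solution of \eqref{main-eqn} on a moving right half-line and to pick $T(t_*)$ as the moment when a certain comoving displacement is maximized; at that moment the super-solution exactly touches the new interface and the exponential bound transfers cleanly to the new interface frame.

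Writing $\xi_0:=\xi_\theta(t_*;s)$, introduce the comoving displacement
\[
g(t):=\xi_\theta(t;s)-\xi_0-c(t-t_*),\qquad t\in[t_*,\,t_*+T_*].
\]
Then $g(t_*)=0$ and, by Lemma~\ref{lem-propogation} together with the hypothesis $c<h_*/T_*$, $g(t_*+T_*)\ge h_*-cT_*>0$. Continuity of $g$ on this compact interval yields a maximizer $T(t_*)\in(t_*,\,t_*+T_*]$. Set $\alpha^*:=g(T(t_*))\ge h_*-cT_*>0$ and
\[
L(t):=\xi_0+c(t-t_*)+\alpha^*,\qquad v(t,x):=\theta e^{-c(x-L(t))}.
\]
By construction, $L(t)\ge\xi_\theta(t;s)$ throughout $[t_*,T(t_*)]$, with equality at $t=T(t_*)$.

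The comparison step is then nearly automatic. On the parabolic domain $D:=\{(t,x):t\in[t_*,T(t_*)],\ x\ge L(t)\}$, $v\le\theta$ so $f(t,v)\equiv 0$ and a direct computation gives $v_t=c^2v=v_{xx}$, making $v$ a classical (super-)solution of \eqref{main-eqn} on $D$. At $t=t_*$, for $x\ge L(t_*)=\xi_0+\alpha^*$, one has $v(t_*,x)=\theta e^{c\alpha^*}e^{-c(x-\xi_0)}\ge u(t_*,x;s)$ by the hypothesis; on the lateral boundary $x=L(t)$, $v(t,L(t))=\theta\ge u(t,L(t);s)$ because $L(t)\ge\xi_\theta(t;s)$ and $u(t,\cdot;s)$ is strictly decreasing (Lemma~\ref{lem-basic-prop}). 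This same monotonicity gives $u\le\theta$ on $D$, so $f(t,u)\equiv 0$ on $D$ as well, and the standard parabolic comparison for bounded solutions on a right half-line yields $u\le v$ on $D$. Evaluating at $t=T(t_*)$, where $L(T(t_*))=\xi_\theta(T(t_*);s)$, and substituting $y=x-\xi_\theta(T(t_*);s)$ produces the desired bound for $y\ge 0$.

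For the time bounds, $T(t_*)-t_*\le T_*=:C_0$ is built into the definition. For the lower bound, Theorem~\ref{thm-non-flat}(ii) gives $|\xi_\theta'(\cdot;s)|\le C_\theta$ uniformly, so $g$ is Lipschitz with constant at most $C_\theta+c$, and
\[
\alpha^*=g(T(t_*))-g(t_*)\le(C_\theta+c)(T(t_*)-t_*);
\]
combined with $\alpha^*\ge h_*-cT_*$ this forces $T(t_*)-t_*\ge c_0:=(h_*-cT_*)/(C_\theta+c)>0$, uniform in $s$ and $t_*$. The main structural subtlety is arranging $L$ to satisfy three demands at once: (i) $L'(t)\ge c$ so $v$ stays a super-solution on $\{v\le\theta\}$, (ii) $L(t)\ge\xi_\theta(t;s)$ on $[t_*,T(t_*)]$ so the lateral value $v=\theta$ dominates $u$, and (iii) $L(T(t_*))=\xi_\theta(T(t_*);s)$ exactly, so that the bound transfers from the $L$-frame to the new interface frame without loss; taking $T(t_*)$ to be an arg-max of $g$ and $L$ the affine shift above achieves all three simultaneously, with the uniform positivity of $c_0$ forced by the universal Lipschitz control on $\xi_\theta'$ from Theorem~\ref{thm-non-flat}(ii).
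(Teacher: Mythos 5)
Your proof is correct, but it selects the return time $T(t_{*})$ by a different mechanism than the paper. The paper slides the larger-amplitude exponential $v(t,x;t_{*})=\theta_{*}e^{-c(x-\xi_{\theta}(t_{*};s)-c(t-t_{*}))}$ with $\theta_{*}>\theta$, defines $T(t_{*})$ as the first time the interface $\xi_{\theta}(\cdot;s)$ catches the $\theta$-level $\eta_{\theta}(\cdot;t_{*})$ of this super-solution (equivalently, the first time the strict comparison on $\{x\geq\xi_{\theta}(t;s)\}$ can fail), proves $T(t_{*})<\infty$ using $c<c_{\inf}$ together with the lower bound \eqref{estimate-lower-bound}, gets the upper bound $C_{0}=n_{0}T_{*}$ by iterating Lemma \ref{lem-propogation}, and gets the lower bound from the initial gap $\frac{1}{c}\ln\frac{\theta_{*}}{\theta}$ divided by the relative speed $C_{*}-c$. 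You instead keep the amplitude exactly $\theta$ but translate by $\alpha^{*}=\max_{[t_{*},t_{*}+T_{*}]}g$, taking $T(t_{*})$ to be an arg-max of the comoving displacement $g(t)=\xi_{\theta}(t;s)-\xi_{\theta}(t_{*};s)-c(t-t_{*})$; this guarantees simultaneously that the line $L(t)$ stays to the right of the interface on $[t_{*},T(t_{*})]$ (so the lateral boundary value $\theta$ dominates $u$), that it touches the interface exactly at $T(t_{*})$ (so the bound transfers with no loss of amplitude), and that $T(t_{*})-t_{*}\leq T_{*}$ by construction, while the lower bound follows from $\alpha^{*}\geq h_{*}-cT_{*}$ and the Lipschitz bound $|g'|\leq C_{\theta}+c$ from Theorem \ref{thm-non-flat}(ii). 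Your route is somewhat more economical: it works on a single fixed window, needs only $c<h_{*}/T_{*}$ (the hypothesis $c<c_{\inf}$ and the finiteness argument for a first hitting time are not used), avoids the auxiliary constant $\theta_{*}$, and yields both time bounds in one stroke; the paper's first-hitting construction, on the other hand, does not require maximizing a displacement and produces $T(t_{*})$ as a natural first return time. The comparison on the non-cylindrical domain $\{x\geq L(t)\}$ that you invoke is of the same type the paper already uses in Lemma \ref{lem-bd-width}, so no new ingredient is needed there, and all the quantities you use ($T_{*}$, $h_{*}$, $C_{\theta}$) are uniform in $s<0$ and $t_{*}\geq s+T_{D}$, so your constants $c_{0}$, $C_{0}$ have the required independence.
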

\begin{proof}
Fix some $\theta_{*}\in(\theta,1)$. For $t\geq t_{*}$, define
\begin{equation}\label{aux-fun-sliding}
v(t,x;t_{*})=\theta_{*}e^{-c(x-\xi_{\theta}(t_{*};s)-c(t-t_{*}))}.
\end{equation}
Clearly, $u(t_{*},x;s)<v(t_{*},x;t_{*})$ for $x\geq\xi_{\theta}(t_{*};s)$ by assumption. By comparison principle, we have $u(t,x;s)<v(t,x;t_{*})$ for $x\geq\xi_{\theta}(t;s)$ for all $t>t_{*}$ with $t-t_{*}$ sufficiently small. In fact, since $u(t_{*},\xi_{\theta}(t_{*};s);s)<v(t_{*},\xi_{\theta}(t_{*};s);t_{*})$, continuity ensures the existence of some $t_{1}>t_{*}$ with $t_{1}-t_{*}$ small such that $u(t,\xi_{\theta}(t;s);s)<v(t,\xi_{\theta}(t;s);t_{*})$ for all $t\in[t_{*},t_{1}]$. Since $v(t,x;t_{*})$ solves $v_{t}=v_{xx}$ and $f(t,u(t,x;s))=0$ for $x\geq\xi_{\theta}(t;s)$, we conclude from the comparison principle that $u(t,x;s)<v(t,x;t_{*})$ for $x\geq\xi_{\theta}(t;s)$ for all $t\in[t_{*},t_{1}]$.

Now, we define
\begin{equation*}
T(t_{*})=\sup\big\{t\geq t_{*}\big|u(\tau,x;s)<v(\tau,x;t_{*}),\,\,x\geq\xi_{\theta}(\tau;s)\,\,\text{holds for all}\,\,\tau\in[t_{*},t)\big\}.
\end{equation*}
Clearly, $T(t_{*})>t_{*}$. Since $\phi(x-x_{s}-c_{\inf}(t-s))\leq u(t,x;s)$ and $c<c_{\inf}$, we conclude that $T(t_{*})<\infty$.

Again, since $v(t,x;t_{*})$ solves $v_{t}=v_{xx}$ and $f(t,u(t,x;s))=0$ for $x\geq\xi_{\theta}(t;s)$, we conclude from the comparison principle that, at time $T(t_{*})$, we must have
\begin{equation*}
\begin{split}
&u(T(t_{*}),x;s)\leq v(T(t_{*}),x;s),\quad x\geq\xi_{\theta}(T(t_{*});s),\\
&u(T(t_{*}),\xi_{\theta}(T(t_{*});s);s)=\theta=v(T(t_{*}),\xi_{\theta}(T(t_{*});s);t_{*}).
\end{split}
\end{equation*}
Using \eqref{aux-fun-sliding}, we readily check $u(T(t_{*}),x;s)\leq\theta e^{-c(x-\xi_{\theta}(T(t_{*});s))}$ for $x\geq\xi_{\theta}(T(t_{*});s)$.

For the ``moreover" part, let $\eta_{\theta}(t;t_{*})$ be the unique point such that $v(t,\eta_{\theta}(t;t_{*});t_{*})=\theta$. Then, $T(t_{*})$ is the first time that $\xi_{\theta}(t;s)$ hits $\eta_{\theta}(t;t_{*})$. Note that $\eta_{\theta}(t;t_{*})$ moves rightward at a constant speed $c$, that is,
\begin{equation*}
\eta_{\theta}(t;t_{*})=\eta_{\theta}(t_{*};t_{*})+c(t-t_{*})=\xi_{\theta}(t_{*};s)+\frac{1}{c}\ln\frac{\theta_{*}}{\theta}+c(t-t_{*}).
\end{equation*}
By Lemma \ref{lem-propogation}, for any $n\in\N$, $\xi_{\theta}(t_{*}+nT_{*};s)\geq \xi_{\theta}(t_{*};s)+nh_{*}$. Since $c<\frac{h_{*}}{T_{*}}$, we can find some $n_{0}$ such that
$\xi_{\theta}(t_{*}+n_{0}T_{*};s)\geq\eta_{\theta}(t_{*}+n_{0}T_{*};t_{*})$, which leads to $T(t_{*})-t_{*}\leq n_{0}T_{*}$. This establishes the upper bound.

For the lower bound, we use Theorem \ref{thm-non-flat}(ii), saying that $\xi_{\theta}(t;s)$ propagates not faster than the speed $C_{*}:=C_{\theta}\geq c_{\inf}$. Therefore, it takes, at least, $\frac{\eta_{\theta}(t_{*};t_{*})-\xi_{\theta}(t_{*};s)}{C_{*}-c}=\frac{1}{c(C_{*}-c)}\ln\frac{\theta_{*}}{\theta}$, for $\xi_{\theta}(t;s)$ to hit $\eta_{\theta}(t;t_{*})$. Thus, $T(t_{*})-t_{*}\geq\frac{1}{c(C_{*}-c)}\ln\frac{\theta_{*}}{\theta}$.
\end{proof}

We remark that the constant $c_{0}$ in the statement of Lemma \ref{lem-sliding} does depend on the choice of $c$ as in the statement of the lemma and $\theta_{*}$ as in the proof. But this will not cause any trouble, because we only need some $c\in(0,\min\{c_{\inf},\frac{h_{*}}{T_{*}}\})$ and some $\theta_{*}\in(\theta,1)$.

Lemma \ref{lem-sliding} lays the foundation for an iteration argument. To run such an argument, we need the exponential decay condition as in the lemma to hold at some initial time greater than $s+T_{D}$. This is given by

\begin{lem}\label{lem-sliding-initial}
Let $c\in(0,\min\{c_{\inf},\frac{h_{*}}{T_{*}}\})$ be small. For any $s<0$ there exists $T_{s}>0$ such that
\begin{equation*}
u(s+T_{s},x+\xi_{\theta}(s+T_{s};s);s)\leq\theta e^{-cx},\quad x\geq0.
\end{equation*}
Moreover, $T_{D}\leq T_{s}\leq\hat{C}_{0}$ for some $\hat{C}_{0}>0$.
\end{lem}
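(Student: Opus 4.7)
The plan is to run the sliding super-solution argument of Lemma \ref{lem-sliding}, but initialized at $t_{*}=s$ itself rather than at some $t_{*}\geq s+T_{D}$. The key observation enabling this shortcut is that the prescribed initial datum $u(s,\cdot;s)=\phi(\cdot-x_{s})$ already decays at rate $c_{\inf}$ ahead of its interface (by \eqref{explicit-sol}), which is strictly faster than the target rate $c<c_{\inf}$. So the exponential bound holds at $t=s$ for free; the real questions are how long one can push the comparison forward and whether that time necessarily exceeds $T_{D}$.

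I would fix some $\theta_{*}\in(\theta,1)$ (say $\theta_{*}=(1+\theta)/2$) and consider
\[
v(t,x):=\theta_{*}e^{-c(x-x_{s}-c(t-s))},\qquad t\geq s,\ x\in\R,
\]
which satisfies $v_{t}=v_{xx}$. On the parabolic region $\{(t,x):t\geq s,\,x\geq\xi_{\theta}(t;s)\}$ the reaction term vanishes for $u(\cdot,\cdot;s)$ as well, and the initial inequality $u(s,x;s)=\theta e^{-c_{\inf}(x-x_{s})}\leq\theta_{*}e^{-c(x-x_{s})}=v(s,x)$ for $x\geq x_{s}$ is immediate from $c\leq c_{\inf}$ and $\theta<\theta_{*}$. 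So as long as $v(t,\xi_{\theta}(t;s))\geq\theta$, the comparison principle applied to the moving half-line yields $u\leq v$ on that half-line. I would then let $T(s)$ be the first time $t>s$ at which $v(t,\xi_{\theta}(t;s))=\theta$, and set $T_{s}:=T(s)-s$. At this matching time the identity $\xi_{\theta}(T(s);s)-x_{s}-cT_{s}=\frac{1}{c}\ln(\theta_{*}/\theta)$ gives $v(T(s),y+\xi_{\theta}(T(s);s))=\theta e^{-cy}$, whence the required bound follows for $y\geq 0$.

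For the upper bound on $T_{s}$, I would use the lower envelope \eqref{estimate-lower-bound}, which forces $\xi_{\theta}(t;s)\geq x_{s}+c_{\inf}(t-s)$ and hence $v(t,\xi_{\theta}(t;s))\leq\theta_{*}e^{-c(c_{\inf}-c)(t-s)}$; this hits $\theta$ by time $\hat{C}_{0}:=\frac{\ln(\theta_{*}/\theta)}{c(c_{\inf}-c)}$. For the lower bound $T_{s}\geq T_{D}$ --- the only real obstacle --- I would invoke the upper envelope constructed in the proof of Lemma \ref{lem-entire-sol}, which gives $\xi_{\theta}(t;s)-x_{s}\leq y_{0}+c_{L}(t-s)+\frac{1}{c_{\inf}}\ln(1/\theta)$ (here I write $c_{L}$ for the constant called $c$ in Lemma \ref{lem-entire-sol}, to avoid a name clash). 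On $[s,s+T_{D}]$ this is bounded by a finite constant $B$ depending only on $T_{D}$ and the data, so $v(t,\xi_{\theta}(t;s))\geq\theta_{*}e^{-cB}$. Demanding $\theta_{*}e^{-cB}\geq\theta$, i.e.\ $c\leq\ln(\theta_{*}/\theta)/B$, forces $T(s)\geq s+T_{D}$; this is permitted, since the statement allows $c$ to be chosen small.

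The delicate point is precisely this lower bound $T_{s}\geq T_{D}$: the remaining ingredients (initial comparison at $t=s$, moving-boundary parabolic comparison, and the upper bound $\hat{C}_{0}$) are direct adaptations of Lemma \ref{lem-sliding}. The difficulty is that Theorem \ref{thm-non-flat}, which would control the maximal interface speed, is unavailable on $[s,s+T_{D}]$ --- that is exactly the window whose existence we are trying to secure. My workaround is to fall back on the cruder pointwise upper envelope from Lemma \ref{lem-entire-sol} and absorb the resulting finite constant $B$ into the smallness hypothesis on $c$.
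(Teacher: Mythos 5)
Your construction is correct, and it is the same sliding construction as the paper's: the same super-solution $v(t,x)=\theta_{*}e^{-c(x-x_{s}-c(t-s))}$, the same first-hitting-time definition of $s+T_{s}$, the same moving-half-line comparison (valid because $f(t,u)=0$ where $u\le\theta$ and $v_{t}=v_{xx}$), and the same upper bound $T_{s}\le \frac{\ln(\theta_{*}/\theta)}{c(c_{\inf}-c)}$ coming from $\xi_{\theta}(t;s)\ge x_{s}+c_{\inf}(t-s)$. The one genuine divergence is the source of the lower bound $T_{s}\ge T_{D}$: the paper invokes Proposition \ref{prop-bd-propagation} together with \eqref{estimate-lower-bound} to confine $\xi_{\theta}(t;s)$ to $[x_{s},x_{s}+h_{D}]$ on $[s,s+T_{D}]$ and then takes $c$ small so that $\tfrac1c\ln(\theta_{*}/\theta)>h_{D}$, whereas you bound $\xi_{\theta}(t;s)-x_{s}\le B$ on $[s,s+T_{D}]$ using only the exponential upper envelope of Lemma \ref{lem-entire-sol} (the right-hand inequality of \eqref{aprior-estimate-two-side1}) and then absorb $B$ into the smallness of $c$. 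Both bounds are uniform in $s$ (your $B$ depends only on $y_{0}$, the envelope speed, $\theta$ and $T_{D}$), so both work; your version is more elementary in that it bypasses the Section 3 propagation estimate for this step, at the cost of a cruder constant, while the paper's choice keeps the smallness condition on $c$ tied to the interface-propagation bound $h_{0}(T_{D})$ already established. One small fix: require the strict inequality $\theta_{*}e^{-cB}>\theta$ (i.e.\ $c<\ln(\theta_{*}/\theta)/B$), since with equality the first hitting time could in principle occur inside $[s,s+T_{D}]$ and the conclusion $T_{s}\ge T_{D}$ would not follow; this is harmless since $c$ may be taken small.
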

\begin{proof}
Fix some $\theta_{*}\in(\theta,1)$. By Proposition \ref{prop-bd-propagation} and \eqref{estimate-lower-bound}, there exists $h_{D}>0$ such that
\begin{equation}\label{estimate-possible-position}
\xi_{\theta}(t;s)\in[x_{s},x_{s}+h_{D}]\,\,\text{for}\,\,t\in[s,s+T_{D}].
\end{equation}
Now, for $t\geq s$, we define
\begin{equation}\label{aux-fun-sliding-intial}
v(t,x;s)=\theta_{*}e^{-c(x-x_{s}-c(t-s))}.
\end{equation}
Note that for small $c$, we can guarantee that the unique solution of the algebraic equation $\theta_{*}e^{-c(x-x_{s})}=\theta$ is greater than $x_{s}+h_{D}$. Let us denote this solution by $x_{s}+h_{D}+x_{D}$ for some $x_{D}>0$. As in Lemma \ref{lem-sliding}, let $\eta_{\theta}(t;s)$ be the unique point such that $v(t,\eta_{\theta}(t;s);s)=\theta$ and $s+T_{s}$ be the first time that $\xi_{\theta}(t;s)$ hits $\eta_{\theta}(t;s)$.

Since $\eta_{\theta}(s;s)=x_{s}+h_{D}+x_{D}$ and $\eta_{\theta}(t;s)$ moves rightward at a constant speed $c$, \eqref{estimate-possible-position} ensures $T_{s}\geq T_{D}$. On the other hand, by \eqref{explicit-sol} and \eqref{estimate-lower-bound}, we have $\xi_{\theta}(t;s)\geq x_{s}+c_{\inf}(t-s)$, which implies that it will take, at most, $\frac{h_{D}+x_{D}}{c_{\inf}-c}$, for $\xi_{\theta}(t;s)$ to hit $\eta_{\theta}(t;s)$. Thus, $T_{s}\leq\frac{h_{D}+x_{D}}{c_{\inf}-c}$.

Finally, at the first hitting time $s+T_{s}$, we have the estimate
\begin{equation*}
u(s+T_{s},x+\xi_{\theta}(s+T_{s};s);s)\leq\theta e^{-cx},\quad x\geq0.
\end{equation*}
as in the proof of Lemma \ref{lem-sliding}.
\end{proof}

Finally, we prove Theorem \ref{thm-decaying-2}.

\begin{proof}[Proof of Theorem \ref{thm-decaying-2}]
Let $c\in(0,\min\{c_{\inf},\frac{h_{*}}{T_{*}}\})$ be small such that both Lemma \ref{lem-sliding} and Lemma \ref{lem-sliding-initial} hold. By Lemma \ref{lem-sliding-initial}, we have $u(s+T_{s},x+\xi_{\theta}(s+T_{s};s);s)\leq\theta e^{-cx}$ for $x\geq0$. Since $T_{s}\geq T_{D}$ by Lemma \ref{lem-sliding-initial}, we can apply Lemma \ref{lem-sliding} to obtain that at each moment $T^{n}(s+T_{s})=\underbrace{T\circ T\circ\cdots\circ T}_{n\,\,\text{times}}(s+T_{s})$, there holds
\begin{equation*}
u(T^{n}(s+T_{s}),x+\xi_{\theta}(T^{n}(s+T_{s});s);s)\leq\theta e^{-cx},\quad x\geq0
\end{equation*}
for all $n\in\N$. Again, by Lemma \ref{lem-sliding}, $c_{0}\leq T^{n}(s+T_{s})-T^{n-1}(s+T_{s})\leq C_{0}$ for all $n\in\N$. In particular, $T^{n}(s+T_{s})\ra\infty$ as $n\ra\infty$, and $[s+T_{s},\infty)=\cup_{n\in\N}[T^{n-1}(s+T_{s}),T^{n}(s+T_{s})]$.

Next, we claim that there is $\hat{\theta}>0$ such that
\begin{equation}\label{estimate-decaying}
u(t,x+\xi_{\theta}(t;s);s)\leq\hat{\theta}e^{-cx},\quad x\geq0
\end{equation}
for all $s<0$, $t\geq s+T_{s}$. Fix any $n\in\N$ and consider the interval $[T^{n-1}(s+T_{s}),T^{n}(s+T_{s})]$. By the proof of Lemma \ref{lem-sliding}, we have for any $t\in[T^{n-1}(s+T_{s}),T^{n}(s+T_{s})]$ and $x\geq\xi_{\theta}(t;s)$,
\begin{equation*}
\begin{split}
u(t,x;s)&\leq v(t,x;T^{n-1}(s+T_{s}))\\
&=\theta_{*}e^{-c(x-\xi_{\theta}(T^{n-1}(s+T_{s});s)-c(t-T^{n-1}(s+T_{s})))}\\
&=\theta_{*}e^{-c(x-\xi_{\theta}(t;s))}e^{-c(\xi_{\theta}(t;s)-\xi_{\theta}(T^{n-1}(s+T_{s});s))}e^{c^{2}(t-T^{n-1}(s+T_{s}))}\\
&\leq\theta_{*}e^{cC_{*}(t-T^{n-1}(s))}e^{c^{2}(t-T^{n-1}(s+T_{s}))}e^{-c(x-\xi_{\theta}(t;s))}\\
&\leq\theta_{*}e^{cC_{*}C_{0}}e^{c^{2}C_{0}}e^{-c(x-\xi_{\theta}(t;s))}.
\end{split}
\end{equation*}
The claim follows with $\hat{\theta}=\theta_{*}e^{cC_{*}C_{0}}e^{c^{2}C_{0}}$, where $C_{*}=C_{\theta}$ is given in Theorem \ref{thm-non-flat}(ii).

To finish the proof, we fix some $M_{0}>0$ and set $\al_{M_{0}}=\min_{M\in[0,M_{0}]}\al(M)$, where the function $\al(\cdot)$ is given by Theorem \ref{thm-non-flat}. It then follows from the fact that $u(t,\xi_{\theta}(t;s);s)=\theta$ and Theorem \ref{thm-non-flat} that
\begin{equation*}
u(t,x+\xi_{\theta}(t;s);s)\leq-\al_{M_{0}}x+\theta,\quad x\in[0,M_{0}]
\end{equation*}
for $s<0$, $t\geq s+T_{s}$. By monotonicity, we obtain $u(t,x+\xi_{\theta}(t;s);s)\leq-\al_{M_{0}}M_{0}+\theta$ for $x\geq M_{0}$. Note that by enlarging $\hat{\theta}$ if necessary, we may assume without loss of generality that $\hat{\theta}e^{-cM_{0}}>\theta$, which implies $-\al_{M_{0}}x+\theta<\hat{\theta}e^{-cx}$ for all $x\in[0,M_{0}]$. Now, let $x_{*}>M_{0}$ be the smallest point such that $\hat{\theta}e^{-cx_{*}}=-\al_{M_{0}}x+\theta$. All these together, we obtain for $x\geq0$
\begin{equation*}
\begin{split}
u(t,x+\xi_{\theta}(t;s);s)\leq\psi_{*}(x)=\left\{\begin{aligned}
-\al_{M_{0}}x+\theta,&\quad x\in[0,M_{0}],\\
-\al_{M_{0}}M_{0}+\theta,&\quad x\in[M_{0},x_{*}],\\
\hat{\theta}e^{-cx},&\quad x\geq x_{*}.
\end{aligned} \right.
\end{split}
\end{equation*}
Set $c_{*}=\frac{1}{x_{*}}\ln\frac{\theta}{\theta-\al_{M_{0}}M_{0}}$, that is, $\theta e^{-c_{*}x_{*}}=-\al_{M_{0}}M_{0}+\theta$. By further enlarging $\hat{\theta}$ if necessary, we can make $x_{*}$ sufficiently large so that $c_{*}\leq\al_{M_{0}}$, which ensures $\phi_{*}(x)\leq\theta e^{-c_{*}x}$ for $x\geq0$. Hence, $u(t,x+\xi_{\theta}(t;s);s)\leq\theta e^{-c_{*}x}$ for $x\geq0$. The theorem then follows with $\hat{T}_{D}=\sup_{s<0}T_{s}$.
\end{proof}


\section{Transition Fronts in Time Heterogeneous Media}\label{sec-transition-wave-const}

In this section, we investigate front propagation phenomena in \eqref{main-eqn} and prove Theorem \ref{thm-transition-wave}. Throughout this section, we assume $\rm(H1)$ and $\rm(H2)$. We first present two lemmas about critical transition fronts (see Definition \ref{def-transition-wave}).

\begin{lem}[Uniqueness of critical transition fronts]
\label{lm-critical-traveling-wave1}
If $u(t,x)$ and $\tilde{u}(t,x)$ are critical transition fronts of \eqref{main-eqn}, then there is a space shift $\zeta_{0}\in\R$ such that $u(t,x+\zeta_{0})=\tilde{u}(t,x)$ for all $x\in\R$ and $t\in\R$.
\end{lem}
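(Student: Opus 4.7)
My plan is to exploit the critical property in both directions and then apply the strong maximum principle to the difference $v := u - \tilde u$. Applying the criticality of $u$ with $\tilde u$ as the competitor yields a function $\zeta_1 : \R \to \R$ with $v(t,\cdot) \geq 0$ on $\{x \leq \zeta_1(t)\}$ and $v(t,\cdot) \leq 0$ on $\{x \geq \zeta_1(t)\}$. Symmetrically, applying the criticality of $\tilde u$ with $u$ as the competitor yields $\zeta_2 : \R \to \R$ such that $v(t,\cdot) \leq 0$ on $\{x \leq \zeta_2(t)\}$ and $v(t,\cdot) \geq 0$ on $\{x \geq \zeta_2(t)\}$.

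Next, I would run a case analysis at each fixed $t$. If $\zeta_1(t) \leq \zeta_2(t)$, then combining the two sets of inequalities forces $v(t,\cdot) \equiv 0$ on $(-\infty, \zeta_1(t)]$ and on $[\zeta_2(t), \infty)$, while $v(t,\cdot) \leq 0$ on $[\zeta_1(t), \zeta_2(t)]$. The reverse ordering $\zeta_2(t) \leq \zeta_1(t)$ gives the analogous conclusion with $v \geq 0$ on the middle interval. In either case, $v(t,\cdot)$ is sign-definite on $\R$ and vanishes identically on two half-lines.

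Now $v$ satisfies a linear parabolic equation
\begin{equation*}
v_t = v_{xx} + c(t,x)\, v, \qquad c(t,x) := \frac{f(t,u(t,x)) - f(t,\tilde u(t,x))}{u(t,x) - \tilde u(t,x)}
\end{equation*}
(set $c = 0$ where $u = \tilde u$), and $c$ is bounded by the Lipschitz assumption in $\rm(H1)$. If $v \not\equiv 0$, pick $t_0$ with $v(t_0,\cdot) \not\equiv 0$; without loss of generality $v(t_0, x_0) < 0$, so the case analysis at $t_0$ gives $v(t_0, \cdot) \leq 0$ on $\R$. The strong maximum principle on $\R \times [t_0, \infty)$ then forces $v(t,x) < 0$ strictly for every $t > t_0$ and every $x \in \R$, which contradicts the fact that at any such later time $t$ the case analysis again provides vanishing of $v(t,\cdot)$ on a half-line. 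Hence $v \equiv 0$.

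The main obstacle is the degenerate situation in which the critical inequalities become vacuous on one side (for instance when $u$ and $\tilde u$ are already spatial translates and one dominates the other strictly everywhere, forcing $\zeta_1, \zeta_2$ to be effectively $\pm\infty$); then the case analysis no longer produces a vanishing half-line for $v$. To address this, I would first pre-normalize: using the boundedness of the distance between interface locations of any two transition fronts (analogous to Theorem \ref{thm-bd-interface-width}) together with the uniform steepness at the $\theta$-level, choose the shift $\zeta_0 \in \R$ that makes the $\theta$-interface locations of $u(\cdot,\cdot+\zeta_0)$ and $\tilde u$ coincide at a reference time. After this pre-shift, $u(\cdot,\cdot+\zeta_0)$ and $\tilde u$ are in general position, the argument above applies, and one concludes $u(t, x+\zeta_0) \equiv \tilde u(t,x)$.
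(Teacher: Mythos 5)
The paper itself proves this lemma only by citing the arguments of \cite[Theorem A]{Sh04}, so your attempt has to stand on its own, and as written it has a genuine gap at the final contradiction. Your concluding step asserts that at every time $t>t_0$ ``the case analysis again provides vanishing of $v(t,\cdot)$ on a half-line.'' That is only true if the comparison points $\zeta_1(t),\zeta_2(t)$ from Definition \ref{def-transition-wave} are finite at those times, i.e.\ if the two fronts actually keep intersecting. But under the only tenable reading of criticality (the one you yourself invoke when you allow $\zeta_1,\zeta_2$ to be ``effectively $\pm\infty$''), a strictly ordered pair satisfies the criticality inequalities degenerately and yields no vanishing half-line; and your own strong-maximum-principle step produces exactly this situation, $v(t,\cdot)<0$ on all of $\R$ for every $t>t_0$, which is perfectly consistent with both fronts being critical. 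So no contradiction is reached. The pre-normalization does not repair this: matching the $\theta$-levels only creates a touching at the single reference time, and nothing prevents the two fronts from being strictly ordered at all later (or all other) times, which is precisely the degenerate case you flagged. Note also that if one instead insisted on the literal, finite-valued $\zeta$ in Definition \ref{def-transition-wave}, your argument would prove $u\equiv\tilde u$ with \emph{no} shift, which is incompatible with the fact that every spatial translate of a critical front is again critical and with the way the lemma is used in the proof of Theorem \ref{thm-transition-wave}(2), where $\zeta_0=-\xi_c(T)\neq0$ in general; this is a sign that the contradiction mechanism cannot be the right one.

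The salvageable part of your proposal is the two-sided use of criticality: it does show (even allowing degenerate $\zeta$'s) that for every $t$ the difference $v(t,\cdot)$ is sign-definite on $\R$, and by the strong maximum principle the sign cannot switch as $t$ increases, so two critical fronts are either identical or globally ordered. The correct way to finish is then to use the touching point produced by your normalization \emph{backward} in time rather than forward: if, say, $v\ge0$ everywhere and $v(t_0,\tilde x_0)=0$, the strong maximum principle (Harnack) for the linear equation $v_t=v_{xx}+c(t,x)v$ with bounded $c$ forces $v\equiv0$ on $\R\times(-\infty,t_0]$, and forward uniqueness of the Cauchy problem then gives $v\equiv0$ for all $t$; this closes the argument without ever needing finite $\zeta_i(t)$ at later times. (For choosing the shift, the intermediate value theorem suffices; the bounded interface width and uniform steepness results of Theorems \ref{thm-bd-interface-width} and \ref{thm-non-flat} are proved for the approximating solutions, not for arbitrary transition fronts, so they should not be invoked here. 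They, or their analogues, are instead the kind of quantitative input used in the sliding argument of \cite{Sh04} that the paper actually cites.)
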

\begin{proof}
It follows from the arguments of \cite[Theorem A]{Sh04}.
\end{proof}

\begin{lem}[Existence of critical transition fronts]
\label{lm-critical-traveling-wave2}
If \eqref{main-eqn} admits a transition front $u(t,x)$, then it admits a critical transition front $u^{c}(t,x)$.
\end{lem}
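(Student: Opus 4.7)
The plan is to construct $u^c$ as a greatest element, under a partial order encoding relative steepness, in a family of normalized transition fronts, using parabolic compactness together with the uniform estimates of Sections \ref{sec-bd-interface-width}--\ref{uniform-estimate}. As in Lemma \ref{lem-basic-prop}, every transition front $v$ of \eqref{main-eqn} is strictly decreasing in $x$, so it has a well-defined $\theta$-interface $\xi_v(t)$; by space-homogeneity, one may normalize $\xi_v(0) = 0$. Let $\mathcal{T}$ denote the set of all transition fronts of \eqref{main-eqn} so normalized; by hypothesis $\mathcal{T} \neq \emptyset$.

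On $\mathcal{T}$, declare $v_1 \succeq v_2$ iff there exists $\zeta : \R \to \R$ with $v_1 \geq v_2$ on $\{x \leq \zeta(t)\}$ and $v_1 \leq v_2$ on $\{x \geq \zeta(t)\}$ for all $t \in \R$. Reflexivity and transitivity are routine, and antisymmetry follows from the parabolic strong maximum principle: if $v_1 \succeq v_2 \succeq v_1$ with crossings $\zeta_1,\zeta_2$, then $v_1 \equiv v_2$ on the set $\{x \leq \min(\zeta_1,\zeta_2)\} \cup \{x \geq \max(\zeta_1,\zeta_2)\}$, which has non-empty interior, forcing $v_1 \equiv v_2$ globally. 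By the very definition of critical transition front (after a compensating space shift), a $\succeq$-greatest element of $\mathcal{T}$ is a critical transition front.

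To produce such an element, I would fix a countable dense $\{v_k\}_{k\geq 1} \subset \mathcal{T}$ (separability of $C^{1,2}_{\mathrm{loc}}(\R \times \R)$) and inductively build $w_n \in \mathcal{T}$ with $w_n \succeq w_{n-1}$ and $w_n \succeq v_n$: given $w_{n-1}$ and $v_n$, paste them at their crossing curve to produce a sub-solution of \eqref{main-eqn} that lies above both, then run the parabolic semigroup from a sequence of initial times tending to $-\infty$ and extract a $C^{1,2}_{\mathrm{loc}}$-limit; the uniform estimates of Sections \ref{sec-bd-interface-width}--\ref{uniform-estimate} guarantee the limit exists in $\mathcal{T}$ and satisfies $w_n \succeq w_{n-1}$, $w_n \succeq v_n$. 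A further $C^{1,2}_{\mathrm{loc}}$-limit of $\{w_n\}$, combined with the boundedness of interface width (Theorem \ref{thm-bd-interface-width}), the uniform steepness (Theorem \ref{thm-non-flat}), and the uniform decay estimates behind (Theorem \ref{thm-uniform-estimate-hehind}) and ahead (Theorem \ref{thm-decaying-2}) of the interface, preserves the asymptotics and the normalization $\xi_{u^c}(0) = 0$, yielding $u^c \in \mathcal{T}$ that $\succeq$-dominates every $v_k$, and hence by density every $v \in \mathcal{T}$.

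The main obstacle is transferring the uniform estimates of Sections \ref{sec-bd-interface-width}--\ref{uniform-estimate}, originally established for the specific approximating sequence $\{u(t,x;s_n)\}$ of Section \ref{sec-entire-sol}, to arbitrary elements of $\mathcal{T}$ with constants depending only on the data in (H1), (H2) and not on the particular front. This should follow by sandwiching each $v \in \mathcal{T}$, via the maximum principle, between the same sub- and super-solutions built from $\phi$, $f_{\inf}$, and $f_{\sup}$ that drove the arguments of Sections \ref{sec-bd-interface-width}--\ref{uniform-estimate}, and then re-running those arguments verbatim. A secondary technical difficulty is verifying that the pasting step in the inductive construction indeed yields a sub-solution whose parabolic evolution $\succeq$-dominates both fronts being pasted; once this and the uniform transfer are in place, the Zorn-style iteration delivers the desired critical transition front $u^c$.
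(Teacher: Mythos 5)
The central gap is in your inductive step. Evolving the pointwise maximum of $w_{n-1}(s,\cdot)$ and $v_n(s,\cdot)$ from time $s$ and letting $s\to-\infty$ produces, at best, an entire solution lying pointwise \emph{above} both fronts; pointwise domination is not your relation $\succeq$, and it is not what criticality asks for. A critical front is the \emph{steepest} one: it must cross every other front exactly once, from above on the left and from below on the right, and a front that lies above another everywhere does not have this property (with your normalization $v(0,0)=\theta$ it would in fact be forced to coincide with it by the strong maximum principle). Iterating maxima pushes toward larger solutions, not steeper ones. The step you label a ``secondary technical difficulty'' --- that the evolved pasting $\succeq$-dominates both inputs --- is therefore the heart of the matter, and nothing in the proposal supplies it. The tool that yields such single-crossing conclusions is the Sturm zero-number/intersection-comparison principle (Angenent's theorem, already used in Lemma \ref{lem-basic-prop}) applied to suitably normalized solutions or to steep initial data, and this is the mechanism behind the paper's actual proof, which simply invokes the arguments of \cite[Theorem A]{Sh04}: there the critical (steepest) wave is obtained from a given wave-like solution by a normalization and limiting procedure combined with comparison/single-crossing arguments, not by a Zorn-type iteration of pasted maxima.

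There are further unproved ingredients. First, you assert that every transition front is strictly decreasing in $x$ ``as in Lemma \ref{lem-basic-prop}'', but that lemma's proof starts from the monotone datum $\phi(x-x_s)$ at a finite initial time; an arbitrary transition front has no such initial time, so monotonicity (hence the well-defined interface $\xi_v$ and your normalization) needs a separate argument. Second, the transfer of the uniform estimates of Sections \ref{sec-bd-interface-width}--\ref{uniform-estimate} to arbitrary elements of $\mathcal{T}$, which you need both for the limits as $s\to-\infty$ to remain transition fronts and to keep the interfaces of $w_{n-1}$ and $v_n$ within bounded distance at the pasting times $s\to-\infty$, is asserted rather than proved; for instance, the exponential bound ahead of the interface in Section \ref{sec-entire-sol} came from the specific datum $\phi$ and is not known a priori for an arbitrary front, and without it the pasted datum need not have a controlled interface. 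Third, passing from $\succeq$-domination of a countable dense subset $\{v_k\}$ to all of $\mathcal{T}$ requires ruling out that the crossing functions $\zeta_k$ escape to $\pm\infty$ along the approximation. Each of these might be repairable, but together with the main gap above the proposal does not amount to a proof; the workable route is the single-crossing (zero-number) construction used in \cite{Sh04}.
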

\begin{proof}
It follows from the arguments of \cite[Theorem A]{Sh04}.
\end{proof}

We now prove Theorem \ref{thm-transition-wave}.

\begin{proof}[Proof of Theorem \ref{thm-transition-wave}]
$\rm(1)$ Let $u(t,x)$, $x\in\R$, $t\in\R$ be the global-in-time solution of \eqref{main-eqn} given in Theorem \ref{thm-entire-sol}.

We first show that there is a continuously differentiable function $\xi:\R\ra\R$ such that $u(t,\xi(t))=\theta$ for all $t\in\R$. Since
\begin{equation*}
\sup_{t<s,t\geq s+T_{D}}\bigg|\frac{d}{dt}\xi_{\theta}(t;s)\bigg|\leq C_{*}
\end{equation*}
by Theorem \ref{thm-non-flat}, we conclude from Arzel\`{a}-Ascoli theorem and the diagonal argument that $\xi_{\theta}(t;s)$ converges to $\xi(t)$ uniformly on any compact set as $s\ra-\infty$ along some subsequence. Since $u(t,\xi_{\theta}(t;s);s)=\theta$ for all $s<0$, $t\geq s$, we find $u(t,\xi(t))=\theta$ for all $t\in\R$. By Theorem \ref{thm-bd-interface-width}, $u(t,x)$ is a transition front of \eqref{main-eqn}.

$\rm(1)(i)$ Since $\lim_{x\ra-\infty}u(t,x)=1$ and $\lim_{x\ra\infty}u(t,x)=0$, $u(t,x)$ is strictly decreasing on some open set. We now fix some $t_{0}$ as an initial moment and consider the solution $u(t,x)$ for $t\geq t_{0}$. Let $y>0$. Since $u(t_{0},x+y)-u(t_{0},x)\leq0$ for all $x$ and $u(t_{0},x+y)-u(t_{0},x)<0$ on some open set, we apply maximum principle to $u(t,x+y)-u(t,x)$ to conclude that $u(t,x+y)-u(t,x)<0$ for all $x\in\R$ and $t>t_{0}$. Since $u(t,x)$ is a global-in-time solution, $u(t,x)$ is strictly decreasing in $x$ for all $t\in\R$. We then conclude $u_{x}(t,x)<0$ from Angenent's result (see \cite[Theorem B]{Ang88}) as in the proof of Lemma \ref{lem-basic-prop}.

$\rm(1)(ii)$ For continuous differentiability, we first use the limit $u_{x}(t,\xi_{\theta}(t;s);s)\ra u_{x}(t,\xi(t))$ as $s\ra\infty$ along some subsequence and Theorem \ref{thm-non-flat} to conclude that $\sup_{t\in\R}u_{x}(t,\xi(t))<0$.
The result then follows from the arguments as in the proof of Lemma \ref{lem-negativity}. In particular, we have
\begin{equation*}
\xi'(t)=-\frac{u_{t}(t,\xi(t))}{u_{x}(t,\xi(t))},\quad t\in\R.
\end{equation*}
As a byproduct, we also have $\sup_{t\in\R}|\xi'(t)|<\infty$.

$\rm(1)(iii)$ Since $u(t,x+\xi_{\theta}(t;s);s)\geq v(x)$ for all $x\leq0$, $s<0$ and $t\geq s+T_{D}$ by Theorem \ref{thm-uniform-estimate-hehind}$\rm(i)$, we have $u(t,x+\xi(t))\geq v(x)$ for $x\leq0$ and $t\in\R$. Moreover, setting $s\ra-\infty$ along some subsequence in the estimate
\begin{equation*}
u(t,x+\xi_{\theta}(t;s);s)\geq1-(1-\la_{0})\Big[e^{-\be_{0}(t-s)}+e^{r(x+C(\theta,\la_{0}))}\Big],\quad x\leq-C(\theta,\la_{0})
\end{equation*}
for $s<0$, $t\geq s+T_{D}$ given by Theorem \ref{thm-uniform-estimate-hehind}$\rm(ii)$, we conclude that
\begin{equation*}
u(t,x+\xi(t))\geq1-(1-\la_{0})e^{r(x+C(\theta,\la_{0}))},\quad x\leq-C(\theta,\la_{0}).
\end{equation*}
That is, $1-u(t,x+\xi(t))$ decays exponentially as $x\ra-\infty$ and the decay is uniform in $t\in\R$.

By Theorem \ref{thm-decaying-2}, we clearly have $u(t,\xi(t))\leq\theta e^{-cx}$ for $x\geq0$ and $t\in\R$. Thus, by setting
\begin{equation*}
\begin{split}
\hat{v}(x)=\left\{\begin{aligned}
\max\Big\{v(x),1-(1-\la_{0})e^{r(x+C(\theta,\la_{0}))}\Big\},\quad x\leq0,\\
\theta e^{-cx},\quad x\geq0,\\
\end{aligned} \right.
\end{split}
\end{equation*}
we find the function satisfying all required properties.

$\rm(2)$ By (1) and Lemma \ref{lm-critical-traveling-wave2}, \eqref{main-eqn} has a critical transition front $u^{c}(t,x)$. We prove that it must be a periodic traveling wave. Clearly, $u^{c}(\cdot+T,\cdot)$ is also a transition front. We show its criticality. Let $u$ be an arbitrary transition front. Then $u(\cdot-T,\cdot)$ is a transition front as well. Thus, for any $t\in\R$, there is a $\zeta(t)\in\R$ such that $u^{c}(t,x)\geq u(t-T,x)$ if $x\leq\zeta(t)$ and $u^{c}(t,x)\leq u(t-T,x)$ if $x\geq\zeta(t)$. Replacing $t$ by $t+T$, we find for any $t\in\R$, $u^{c}(t+T,x)\geq u(t,x)$ if $x\leq\zeta(t+T)$ and $u^{c}(t+T,x)\leq u(t,x)$ if $x\geq\zeta(t+T)$. Hence, $u^{c}(\cdot+T,\cdot)$ is critical.

By Lemma  \ref{lm-critical-traveling-wave1}, there exists some $\zeta_{0}\in\R$ such that
\begin{equation}\label{equality-periodic}
u^{c}(t,x+\zeta_{0})=u^{c}(t+T,x),\quad x\in\R,\,\,t\in\R.
\end{equation}
For $t\in\R$, let $\xi_{c}(t)$ be the unique point such that $u^{c}(t,\xi_{c}(t))=\theta$. Assume, without loss of generality, that $\xi_{c}(0)=0$. Setting $t=0$ and $x=\xi_{c}(T)$ in \eqref{equality-periodic}, we find $u^{c}(0,\xi_{c}(T)+\zeta_{0})=u^{c}(T,\xi(T))=\theta$. It follows $\xi_{c}(T)+\zeta_{0}=\xi_{c}(0)=0$, and hence, $\zeta_{0}=-\xi_{c}(T)$. Thus,
\begin{equation}\label{equality-periodic-1}
u^{c}(t,x-\xi_{c}(T))=u^{c}(t+T,x),\quad x\in\R,\,\,t\in\R.
\end{equation}
Let $c_{T}=\frac{\xi_{c}(T)}{T}$. Define
\begin{equation*}
\psi(t,x)=u^{c}(t,x+c_{T}t),\quad x\in\R,\,\,t\in\R.
\end{equation*}
We check $\psi$ satisfies the properties required by a profile of a periodic traveling wave. Using \eqref{equality-periodic-1}, we have for $x\in\R$,
\begin{equation*}
\begin{split}
\psi(t+T,x)&=u^{c}(t+T,x+c_{T}(t+T))\\
&=u^{c}(t+T,x+c_{T}t+\xi_{c}(T))\\
&=u^{c}(t,x+c_{T}t)\\
&=\psi(t,x),
\end{split}
\end{equation*}
that is, $\psi(\cdot+T,\cdot)=\psi$. The uniform-in-time limit at $\pm\infty$ then follows. Since $u^{c}$ solves \eqref{main-eqn}, we readily check $\psi_{t}=\psi_{xx}+c_{T}\psi_{x}+f(t,\psi)$. In conclusion, $u^{c}$ is a periodic traveling wave.
\end{proof}


\section{Transition Fronts in Random Media}\label{sec-random-wave}

In this section, we explore front propagation phenomena in \eqref{random-eq} and prove Theorem \ref{thm-random-wave}.

\begin{proof} [Proof of Theorem \ref{thm-random-wave}]
$\rm(1)$ First of all, for any fixed  $\omega\in\Omega$, by Theorem \ref{thm-transition-wave}, \eqref{random-eq} admits a transition front $u^\omega(t,x)$. Clearly, $u^\omega(t,x)$ is a wave-like solution of \eqref{random-eq} in the sense of \cite[Definition 2.3]{Sh04}. Then by \cite[Theorem A (1)]{Sh04},
\eqref{random-eq} admits a random traveling wave solution $u(t,x;\omega)$.

$\rm(2)$ By \cite[Theorem A (2)]{Sh04}, there are $\Psi^*(\cdot)\in C_{\rm unif}^b(\R,\R)$ and $c^*\in\R$ such that for a.e. $\omega\in\Omega$,
$$
\lim_{t\to\infty}\frac{\xi(t;\omega)}{t}=c^*,
$$
$$
\lim_{t\to\infty}\frac{1}{t}\int_0^ t\Psi(x,\sigma_s\omega)ds=\Psi^*(x)\quad \forall \,x\in\R,
$$
and
$$
\lim_{x\to -\infty}\Psi^*(x)=1,\quad \lim_{x\to\infty} \Psi^*(x)=0.
$$

$\rm(3)$ It follows from \cite[Theorem B (2)]{Sh04}.
\end{proof}



\end{document}